\documentclass[11pt]{amsart}

\usepackage{graphicx}
\DeclareGraphicsRule{*}{eps}{*}{}
\usepackage{stmaryrd}
\usepackage{amssymb}

\usepackage{amsmath,amscd}

\usepackage[T1]{fontenc}
\usepackage{yfonts}

\textwidth=13.5cm

\newcommand{\R}{\mathbb{R}}
\newcommand{\Q}{\mathbb{Q}}

\newcommand{\M}{\mathcal{M}}
\newcommand{\N}{\mathcal{N}}

\newcommand{\A}{\mathcal{A}}

\renewcommand{\P}{\mathbb{P}}

\renewcommand{\Q}{\mathbb{Q}}

\newcommand{\G}{\mathcal{G}}
\newcommand{\K}{\mathcal{K}}
\newcommand{\I}{\mathcal{I}}
\newcommand{\J}{\mathcal{J}}

\newcommand{\Bor}{\mathbf{Bor}}

\newcommand{\har}{\!\!\upharpoonright\!\!}
\renewcommand{\int}{\mbox{int}}
\newcommand{\diam}{\mathrm{diam}}
\newcommand{\dom}{\mbox{dom}}
\newcommand{\cl}[1]{\overline{#1}}
\newcommand{\ocl}{\mathrm{cl}}
\newcommand{\otr}{\omega^{<\omega}}
\newcommand{\ctr}{2^{<\omega}}

\newcommand{\E}{\mathcal{E}}
\newcommand{\T}{\mathbb{T}}

\newcommand{\ana}{\mathbf{\Sigma}^1_1}
\newcommand{\coana}{\mathbf{\Pi}^1_1}
\newcommand{\lana}{\Sigma^1_1}
\newcommand{\lcoana}{\Pi^1_1}
\newcommand{\lbor}{\Delta^1_1}
\newcommand{\shoenfield}{\mathbf{\Sigma}^1_2}
\newcommand{\negshoenfield}{\mathbf{\Pi}^1_2}

\newcommand{\lnegshoenfield}{\Pi^1_2}
\newcommand{\closed}{\mathbf{\Pi}^0_1}
\newcommand{\baire}{\omega^\omega}
\newcommand{\cantor}{2^\omega}
\newcommand{\gdelta}{\mathbf{G}_\delta}
\newcommand{\fsigma}{\mathbf{F}_\sigma}
\newcommand{\ksigma}{\mathbf{K}_\sigma}
\newcommand{\pc}{\mathrm{pc}}
\newcommand{\p}{\mathrm{p}}

\newcommand{\Lev}{\mathrm{Lev}}

\newcommand{\class}{\mathbf{\Gamma}}
\newcommand{\proj}{\mathrm{proj}}
\newcommand{\iform}{I}
\newcommand{\kform}{K}
\newcommand{\btree}{\omega^{<\omega}}
\newcommand{\ctree}{2^{<\omega}}


\newtheorem{theorem}{Theorem}[section]

\newtheorem{lemma}[theorem]{Lemma}
\newtheorem{corollary}[theorem]{Corollary}

\newtheorem{proposition}[theorem]{Proposition}
\newtheorem*{claim}{Claim}

\theoremstyle{definition}
\newtheorem{definition}[theorem]{Definition}

\newtheorem{remark}[theorem]{Remark}

\author{Marcin Sabok}
\thanks{Research supported by MNiSW grant N 201 361836} 
\address{Mathematical Institute,
  Wroc\l aw University, pl. Grunwaldzki $2\slash 4$,
  $50$-$384$ Wroc\l aw, Poland }
\email{sabok@math.uni.wroc.pl}

\title{Forcing, games and families of closed sets}

\begin{document}

\begin{abstract}
  We propose a new, game-theoretic, approach to the
  idealized forcing, in terms of fusion games. This
  generalizes the classical approach to the Sacks and the
  Miller forcing.  For definable ($\coana$ on $\ana$)
  $\sigma$-ideals we show that if a $\sigma$-ideal is
  generated by closed sets, then it is generated by closed
  sets in all forcing extensions.  We also prove an
  infinite-dimensional version of the Solecki dichotomy for
  analytic sets. Among examples, we investigate the
  $\sigma$-ideal $\E$ generated by closed null sets and
  $\sigma$-ideals connected with not piecewise continuous
  functions.
\end{abstract}

\subjclass[2000]{03E15, 28A05, 54H05}

\keywords{proper forcing, $\sigma$-ideals}

\textwidth=13.5cm

\maketitle

\section{Introduction}

Idealized forcing, developed by Zapletal in \cite{Zpl:DSTDF}
and \cite{Zpl:FI}, is a general approach to the forcing used
in descriptive set theory. If $\I$ is a \mbox{$\sigma$-ideal} on a
Polish space $X$, we consider the associated forcing notion
$\P_\I=\Bor(X)\slash\I$ (or an equivalent forcing,
$\Bor(X)\setminus\I$ ordered by inclusion). Among the
well-known examples are the Sacks forcing and the Miller
forcing. The former is associated to the $\sigma$-ideal of
countable subsets of the Cantor space (or any other Polish
space) and the latter is associated to the $\sigma$-ideal of
$\ksigma$ subsets of the Baire space (i.e. those which can
be covered by a countable union of compact sets).

\medskip

Both the countable sets and $\ksigma$ sets form
$\sigma$-ideals generated by closed sets. Zapletal
\cite{Zpl:FI} investigated the forcing arising from
$\sigma$-ideals generated by closed sets and proved the
following (for definition of \textit{continuous reading of
  names} see \cite[Definition 3.1.1]{Zpl:FI})

\begin{theorem}[{Zapletal, \cite[Theorem
    4.1.2]{Zpl:FI}}]\label{genclosed}
  If $\I$ is a $\sigma$-ideal on a Polish space $X$
  generated by closed sets, then the forcing $\P_\I$ is
  proper and has continuous reading of names in the topology
  of $X$.
\end{theorem}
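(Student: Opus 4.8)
We have two things to prove about $\P_\I$: that it is proper, and that it has continuous reading of names in the given topology of $X$, which I denote $\tau$. The plan is to derive the second from the first together with a single topological fact --- a Lusin-type selection principle: \emph{for every Borel $I$-positive set $B$ and every Borel function $g\colon B\to\baire$ there is a Borel $I$-positive $C\subseteq B$ with $g\har C$ continuous in $\tau$.} Granting both the fact and properness, continuous reading of names follows in the standard way: properness guarantees that every name $\dot r$ for an element of $\baire$ equals $\check g(\dot x_{gen})$ below some condition $B$, for a Borel $g\colon B\to\baire$ ($\dot x_{gen}$ being the name of the $\P_\I$-generic point of $X$); the fact then yields $C\le B$ with $g\har C$ $\tau$-continuous and $C\Vdash\dot r=\check g(\dot x_{gen})$, and the countable collection of all such $C$ is the dense family witnessing continuous reading of names in $\tau$.

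\emph{Properness.} Fix a countable elementary submodel $M$ of a large enough initial segment of the universe, with $\I,B\in M$ and $B\notin\I$, and list the open dense subsets of $\P_\I$ lying in $M$ as $\langle D_n:n\in\omega\rangle$ (only countably many, since $M$ is countable). First refine $\tau$ to a finer zero-dimensional Polish topology $\tau_1$ on $X$, with the same Borel sets, in which $B$ and all the conditions in $\bigcup_nD_n\cap M$ are clopen; the closed sets generating $\I$ remain closed in $\tau_1$. The strategy is a fusion, generalising the perfect-tree fusion for Sacks forcing and the superperfect-tree fusion for Miller forcing: build a tree $\langle F_s:s\in\otr\rangle$ of $\tau_1$-closed $I$-positive sets with $F_{s^\frown i}\subseteq F_s$, with $\tau_1$-diameters tending to $0$ along branches, with each $F_{s^\frown i}$ contained in a member of $D_{|s|}\cap M$, and with the children of $F_s$ sufficiently dense in $F_s$; the whole tree can be taken inside $M$. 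Set $C=\bigcap_n\bigcup_{|s|=n}F_s$. Then $C\subseteq B$ and $C$ is $\tau_1$-closed; moreover each point of $C$ lies on a branch $s_0\subseteq s_1\subseteq\cdots$ and so belongs, for every $n$, to a member of $D_n\cap M$, whence $C$ forces $\dot G\cap M$ to meet every dense subset of $\P_\I$ in $M$, i.e. to be $M$-generic. Thus $C$ witnesses properness, provided $C\notin\I$.

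\emph{The main obstacle.} Everything hinges on the $I$-positivity of the fusion limit $C$, and this is exactly where the hypothesis ``$\I$ generated by closed sets'' is used essentially. In the Sacks and Miller cases positivity is free, because the relevant largeness property of a closed set --- being perfect, respectively superperfect --- is preserved by the fusion more or less syntactically, and a nonempty perfect (superperfect) set is automatically uncountable (non-$\ksigma$). For a general $\sigma$-ideal generated by closed sets one must isolate the analogous property: a ``$\tau_1$-everywhere $I$-positive'' closed set, obtained from an arbitrary closed $I$-positive set by the derivative $F\mapsto F\setminus\bigcup\{U\colon U\text{ is }\tau_1\text{-open and }F\cap U\in\I\}$, which removes only a set in $\I$ by $\sigma$-additivity and after which no nonempty relatively open piece is in $\I$. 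One then runs the fusion through such sets, choosing the children of $F_s$ to be $\tau_1$-everywhere positive closed subsets contained in members of $D_{|s|}\cap M$ --- available by density and elementarity, since the members of $D_{|s|}\cap M$ remain abundant below every basic neighbourhood of $F_s$ --- and so dense in $F_s$ that the limit $C$ is again $\tau_1$-everywhere $I$-positive; once that is known, $C\notin\I$ follows from a Baire-category argument inside the Polish space $(C,\tau_1\har C)$ against any putative countable cover of $C$ by the $\tau_1$-closed generators of $\I$. Verifying that everywhere positivity survives the fusion --- equivalently, pinning down the right notion of ``fusion scheme'' of closed sets for the given $\sigma$-ideal --- is what I expect to be the technical heart of the proof.

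\emph{Continuous reading of names in $\tau$.} It remains to prove the topological fact. Given Borel $g\colon B\to\baire$, refine $\tau$ to a finer zero-dimensional Polish topology $\tau_1$ (same Borel sets) making $g$ continuous and $B$ clopen, and fix a countable $\tau_1$-clopen basis $\{W_m:m\in\omega\}$. Build a Lusin scheme of $\tau$-open sets $\langle V_s:s\in\otr\rangle$ with $\cl{V_{s^\frown i}}\subseteq V_s$ (closure in $\tau$), the $V_{s^\frown i}$ ($i\in\omega$) pairwise disjoint, and $\diam V_s\to 0$ along branches, together with Borel $I$-positive sets $B_s\subseteq B\cap V_s$ such that $B_s=\bigcup_iB_{s^\frown i}$ modulo $\I$, and such that for each $m<|s|$ either $B_s\subseteq W_m$ or $B_s\cap W_m=\emptyset$: at each node, first split the current positive set by the finite partition $\{W_m,X\setminus W_m\}$, then by small $\tau$-open pieces, then discard the pieces lying in $\I$. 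Put $C=\bigcap_n\bigcup_{|s|=n}B_s$. Only a countable union of $\I$-sets has been discarded, so $C\notin\I$; and for each $m$ one has $W_m\cap C=C\cap\bigcup\{V_s:|s|=m+1,\ B_s\subseteq W_m\}$, which is $\tau$-relatively open in $C$. Hence $\tau$ and $\tau_1$ coincide on $C$, so $g\har C$, being $\tau_1$-continuous, is $\tau$-continuous. This is the required fact, and with it the deduction of continuous reading of names in the topology of $X$ given in the first paragraph is complete.
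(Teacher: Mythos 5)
This is Zapletal's Theorem \ref{genclosed}, which the paper does not prove: it is cited from \cite[Theorem 4.1.2]{Zpl:FI}. The paper's own contribution on this topic is the stronger Theorem \ref{axiomaclosed}, proved in Section \ref{sec:treerep} by an entirely different route: an unfolded fusion game $G_\I(Y,T)$, its trees of winning strategies forming a forcing dense in $\P_\I$, and an Axiom~A scheme defined via fronts of that tree. So your proposal should be judged as a blind reconstruction of Zapletal's original argument rather than of anything in this paper.

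Your properness outline is in the right spirit (refine the topology, run a fusion through everywhere $\I$-positive closed sets, conclude positivity of the fusion limit by a Baire-category argument relative to the closed generators). You explicitly flag the one genuinely hard point --- ``verifying that everywhere positivity survives the fusion'' --- and defer it. Since that is precisely where Zapletal has to work (and precisely what the fusion-game machinery of Theorem \ref{axiomaclosed} is engineered to package), the proposal is an honest sketch rather than a proof of properness.

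The continuous-reading-of-names argument, however, has a concrete gap that you do not flag. You build a Lusin scheme of \emph{pairwise disjoint} $\tau$-open sets $V_{s^\smallfrown i}$, require that each $B_s$ \emph{decide} $W_m$ for all $m<|s|$, and claim that ``only a countable union of $\I$-sets has been discarded'' (so $\I$-positivity of the limit is free from $\sigma$-additivity). These three demands cannot be met simultaneously, and the recipe ``first split by $\{W_m,X\setminus W_m\}$, then by small $\tau$-open pieces'' does not produce them. If $B_s\cap W_m$ and $B_s\setminus W_m$ are both $\I$-positive and both $\tau$-dense in a common open set (which happens already for the Sacks ideal on $\cantor$ with $W_m$ a dense-codense $\tau_1$-clopen set), there simply are no disjoint $\tau$-open sets $V_{s^\smallfrown i}$, $V_{s^\smallfrown j}$ separating them. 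One must instead cover by small $\tau$-open pieces \emph{first} and then, inside each piece, pick \emph{one} $\I$-positive side of $W_m$. But then the discarded mass is not an $\I$-set --- you throw away one whole side of $W_m$ inside each piece --- and the claim that $C=B$ minus a set in $\I$ collapses. At that point $\sigma$-additivity no longer saves you, and the $\I$-positivity of $C$ has to come from the Baire-category mechanism (a dense $\gdelta$ inside an $\I$-perfect closed set, as in Lemma \ref{iperfect}), i.e. from the very fusion-through-$\I$-perfect-closed-sets apparatus you deferred in the properness part. In other words, the two halves of your proposal cannot be decoupled in the way the first paragraph suggests: the ``topological fact'' is not a separable selection principle but is entangled with the properness fusion, which is exactly why Zapletal proves them together, and why this paper replaces the whole thing with the game-theoretic Theorem \ref{axiomaclosed}.
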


On the other hand, the Sacks and the Miller forcing are
equivalent to forcings with trees (perfect or superperfect
trees, respectively). In both of these cases we have fusion
(Axiom A), which implies both properness and continuous
reading of names.

In this paper, we generalize this as follows.

\begin{theorem}\label{axiomaclosed}
  If $\I$ is a $\sigma$-ideal on a Polish space $X$
  generated by closed sets, then the forcing $\P_\I$ is
  equivalent to a forcing with trees with the fusion
  property.
\end{theorem}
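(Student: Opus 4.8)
The plan is to realize $\P_\I$ as a forcing with a suitable tree of closed sets, in the spirit of the usual presentations of Sacks and Miller forcing, and to read the fusion structure off a fusion game. First I would fix, using Theorem~\ref{genclosed}, a Borel set $B\notin\I$ which is the generic condition, together with a continuous reading of names; by passing to $B$ we may as well assume the whole space is covered by a Borel non-$\I$ set and work below it. The idea is then to associate to each condition $C\in\P_\I$ a tree $T_C\subseteq\otr$ together with a system of closed sets $\{F_s:s\in T_C\}$, decreasing along branches, with $F_\emptyset\subseteq C$, $F_s=\bigcup_{i}F_{s^\frown i}$ modulo $\I$, each $F_s\notin\I$, and $\diam(F_s)\to 0$ along branches so that $\bigcap_n F_{x\har n}$ is a singleton for each branch $x$; the body $[T_C]$ then maps onto a Borel subset of $C$ which is still not in $\I$. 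The point of "fusion" is that a decreasing sequence of such trees that stabilizes on larger and larger finite parts has a nonempty limit tree whose body is again non-$\I$, hence a lower bound in $\P_\I$.

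The key technical device supplying such trees is a \emph{fusion game} $G(C)$ played between two players who alternately build a descending sequence of Borel non-$\I$ sets together with a tree structure and a choice of generators: one player tries to split a given piece $F_s$ into countably many closed non-$\I$ pieces $F_{s^\frown i}\subseteq F_s$ covering $F_s$ up to an element of $\I$ (this is possible precisely because $\I$ is generated by closed sets, so any Borel non-$\I$ set contains a closed non-$\I$ set, and inside a Polish space such a set can be cut into small closed non-$\I$ pieces), while the other player shrinks and reads off names continuously. The main step is to prove that the "builder" has a winning strategy in $G(C)$ for every $C\notin\I$; this uses the $\coana$-on-$\ana$ definability of $\I$ to run the standard derivative/reflection argument ensuring that at every stage the current set is still outside $\I$ and contains a closed non-$\I$ subset of small diameter. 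Running this strategy against all finite partial plays yields, by the standard unfolding, a single tree $T_C$ of closed sets as above whose body computes a Borel non-$\I$ subset of $C$; moreover the construction is uniform enough that restricting the strategy to a subtree produces the trees witnessing fusion.

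With the tree representation in hand, I would define the tree forcing $\T$ whose conditions are exactly the trees $T$ of closed non-$\I$ sets arising this way (closed under the natural "going to a node" and "thinning a level" operations), ordered by inclusion, and show $\T$ is densely embedded into $\P_\I$ via $T\mapsto$ (the Borel set coded by $[T]$): density is exactly the statement that every $C\notin\I$ carries such a tree, proved in the previous paragraph, and the embedding is order- and incompatibility-preserving because $[T']\subseteq[T]$ modulo $\I$ whenever $T'\subseteq T$, while two trees are compatible in $\T$ iff the closed-set systems can be amalgamated on a common non-$\I$ subset. Finally I would verify the fusion property for $\T$: given a decreasing $\omega$-sequence $T_0\supseteq T_1\supseteq\cdots$ with $T_{n+1}\leq_n T_n$ (agreeing on the first $n$ levels and on the chosen generators there), the limit tree $T_\omega=\bigcap_n T_n$ has $F_s$-system obtained as the eventual value at each node, each such $F_s$ is a decreasing intersection of closed non-$\I$ sets and stays out of $\I$ by the $\coana$-on-$\ana$ absoluteness/reflection used throughout, hence $T_\omega\in\T$ and is a lower bound — this is precisely Axiom~A, which yields properness and, since the $F_s$ have vanishing diameter, continuous reading of names, recovering Theorem~\ref{genclosed}.

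I expect the main obstacle to be the second paragraph: proving that the builder wins the fusion game, i.e. that one can always split a Borel non-$\I$ set into \emph{countably many closed non-$\I$ pieces of small diameter covering it modulo $\I$}, while simultaneously keeping enough uniformity for the fusion trees to cohere. The splitting itself is delicate because "generated by closed sets" only guarantees \emph{one} closed non-$\I$ subset, not a covering of a given set; getting the covering-modulo-$\I$ and the smallness of diameters together is where the definability hypothesis does real work, via a Solecki-type dichotomy/reflection argument (compare the infinite-dimensional Solecki dichotomy announced in the abstract). Once that splitting lemma is established with the right uniformity, the rest — defining $\T$, checking the dense embedding, and verifying fusion — is routine bookkeeping of the kind familiar from the Sacks and Miller cases.
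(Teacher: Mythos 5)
Your high-level plan---associate to each condition a tree, and read the fusion off a game---is indeed the skeleton of the paper's proof, and you have correctly identified the splitting step as the delicate point. But the specific tree structure you propose has a gap that makes the fusion unrecoverable without a substantially different idea.

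You want conditions to be Luzin schemes $\{F_s\}$ of closed $\I$-positive sets satisfying $F_s=\bigcup_i F_{s^\frown i}$ modulo $\I$. That covering relation is exactly what makes the body $\bigcap_n\bigcup_{|s|=n}F_s$ provably $\I$-positive, so you cannot drop it. But it is incompatible with the shrinking you need for Axiom~A. To decide a name $\dot\alpha$ for an ordinal while keeping the first $n$ levels intact, you must replace each child $F_{s^\smallfrown i}$ (for $|s|=n$) by a subcondition $F'_{s^\smallfrown i}$ that decides $\dot\alpha$, \emph{and still have $\bigcup_i F'_{s^\smallfrown i}$ cover $F_s$ modulo $\I$}. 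This is asking that every $\P_\I$-condition be coverable modulo $\I$ by countably many deciding subconditions. That is a strong $\sigma$-linking property which fails already in the prototype cases: for Sacks forcing a maximal antichain deciding a name below a perfect set $P$ has size continuum, and no countable subfamily covers $P$ modulo countable sets. So the tree you shrink to will generally violate the covering constraint at level $n$, and the object you end up with is not a condition in your $\T$. Your verification of the fusion closure also does not go through as written: if $\leq_n$ means ``agree on the first $n$ levels,'' the $F_s$ stabilize rather than form decreasing intersections, and in any case ``a decreasing intersection of closed $\I$-positive sets is $\I$-positive'' is false (the ideal of countable sets and intervals of vanishing length is a counterexample); no appeal to $\coana$-on-$\ana$ reflection fixes that.

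The paper sidesteps both problems at once by not using a covering scheme at all. A condition is a triple $(Y,T,S)$ where $S$ is a winning strategy for Adam in an \emph{unfolded} game $G_\I(Y,T)$ (the unfolding is not cosmetic: without it the naive game $G_\I(B)$ for Borel $B$ is not obviously determined, which you do not address). Positivity of $\proj[S]$ is established via the game (Lemma~\ref{pilim}), with no covering relation on a Luzin scheme. The role of your splitting levels is played by the \emph{fronts} $F_k(S)$: the front records that the play has already guaranteed escape from the $k$-th closed obstacle $E_k$. The relation $\leq_k$ preserves the $k$-th front; since a front is a countable antichain in the strategy tree, Axiom~A needs only countably many deciding refinements, one per node of the front, \emph{predense} (not covering) below the new condition. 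Predensity is what you actually need and it is compatible with shrinking; the mod-$\I$ covering you built into your tree definition is strictly stronger and is what breaks. The fusion through a decreasing chain $(Y_k,T_k,S_k)$ then preserves all fronts $F_k$, so the limit is again a winning strategy. In short: your instinct that a fusion game gives the tree is right, but the tree must code a winning strategy with progress markers (fronts), not a covering scheme, and the game must be unfolded to secure determinacy. Those two adjustments are not bookkeeping; they are the content of the proof.
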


Axiom A alone can be also deduced from a result of Ishiu
\cite[Theorem B]{Ishiu} and a strenghtening of Theorem
\ref{genclosed} saying that $\P_\I$ is $<\!\omega_1$-proper
\cite[Lemma 1.3]{ZplIshiu}. Our result, however, shows how
to introduce additional structure (trees) which gives even
deeper insight on the forcing $\P_\I$. In particular, it can
be used to give an alternative proof of continuous reading
of names.

The proof of Theorem \ref{axiomaclosed} uses a technique of
fusion games, which are generalizations of the Banach--Mazur
game (cf.  \cite[Section 8.H]{Kechris}). Independently, T.
M\'atrai \cite{Matrai} introduced and studied similar games.
In \cite{Matrai} fusion games are applied to prove
infinite-dimensional perfect set theorems.

\medskip

Although in general there is no perfect-set theorem for
$\sigma$-ideals generated by closed sets, the strongest
property of this kind is expressed in the following theorem
of Solecki.

\begin{theorem}[{Solecki, \cite[Theorem
    1]{Sol.Cov}}]\label{Sol.dich}
  If $\I$ is a $\sigma$-ideal on a Polish space $X$
  generated by closed sets, then each analytic set in $X$
  either belongs to $\I$, or else contains an $\I$-positive
  $\gdelta$ set.
\end{theorem}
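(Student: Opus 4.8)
The plan is to reduce to the case where $A$ itself has no nonempty relatively open piece lying in $\I$ — call such a set \emph{$\I$-perfect} — then to build the required $\gdelta$ set as the body of a Cantor-type scheme whose nodes carry $\I$-perfect ``promises'', and finally to read off $\I$-positivity of the body from the Baire category theorem.

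\textbf{Step 1 (normalisation).} If $A\in\I$ we are done, so assume $A\notin\I$. Fixing a countable base $\{V_n\}$ of $X$, I iterate the \emph{$\I$-condensation derivative} $A\mapsto A\setminus\bigcup\{V_n : V_n\cap A\in\I\}$. At each step only countably many basic open sets are discarded, and a basic set once discarded never reappears, so the process halts after countably many steps; the total removed set is then a countable union of $\I$-sets. Hence the stable value $A^*$ is again $\ana$, still $\notin\I$, and now $\I$-perfect. The same derivative, applied to closed sets (which it preserves), yields two facts I shall use freely: the closure of an $\I$-perfect analytic set is an $\I$-perfect closed set, and a closed set is $\I$-positive iff it has a nonempty $\I$-perfect closed subset. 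Replace $A$ by $A^*$ and fix a closed $C\subseteq X\times\baire$ with $\proj_X(C)=A$.

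\textbf{Step 2 (the fusion scheme).} By recursion on $|s|$, indexed by a tree on $\omega$, I build open sets $U_s$, clopen sets $Q_s\subseteq\baire$, and \emph{promises}: analytic sets $A_s$ that are $\I$-perfect and dense in the closed set $P_s:=\cl{A_s}$. The requirements are: $\diam U_s,\diam Q_s\le 2^{-|s|}$; $\cl{U_{s^\frown i}}\subseteq U_s$, $\cl{Q_{s^\frown i}}\subseteq Q_s$, $A_{s^\frown i}\subseteq A_s$, $P_s\subseteq U_s$; the families $\{U_{s^\frown i}\}_i$ are locally finite in $U_s$; $(P_s\times Q_s)\cap C\neq\emptyset$ and in fact every point of $A_s$ has a $C$-witness in $Q_s$; and, crucially, $A_s\setminus\bigcup_i A_{s^\frown i}\in\I$. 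The root is $A_\emptyset:=A$, $Q_\emptyset:=\baire$. At a single step one covers $(P_s\times Q_s)\cap C$ by countably many boxes of small diameter, projects each box to $X$ to get analytic sets $D_k$ covering $A_s$, applies the $\I$-condensation derivative to each $D_k$ (and to its small open restrictions) to make them $\I$-perfect, takes closures to obtain the $P_{s^\frown i}$, and picks locally finite small open hulls. The condition $A_s\setminus\bigcup_i A_{s^\frown i}\in\I$ holds because everything discarded along the way — the pieces thrown into $\I$ and the errors $D_k\setminus D_k^{(\infty)}$ of the derivative — is, by the $\sigma$-additivity of $\I$, a single $\I$-set.

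\textbf{Step 3 (the body).} Put $G:=\bigcap_n\bigcup_{|s|=n}U_s$, and for a scheme node $s$ let $G_s:=\bigcap_{m\ge|s|}\bigcup\{U_t : t\supseteq s,\ |t|=m\}\subseteq G$. Iterating $A_t\setminus\bigcup_i A_{t^\frown i}\in\I$ gives $G_s\supseteq A_s\setminus M_s$ for some $M_s\in\I$, whence $G_s\notin\I$ (as $A_s$, being $\I$-perfect, is $\notin\I$). For $x\in G$ the tree $\{s : x\in\cl{U_s}\}$ has a node at every level and, by local finiteness of the $\{U_{s^\frown i}\}_i$, is finitely branching, so König's lemma gives a branch $b$ with $\{x\}=\bigcap_n\cl{U_{b\har n}}$; then $\bigcap_n\big((P_{b\har n}\times Q_{b\har n})\cap C\big)$ is a decreasing sequence of nonempty closed sets of vanishing diameter, whose intersection — nonempty by completeness — is a point $(x,y)\in C$, so $x\in\proj_X(C)=A$; thus $G\subseteq A$. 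Finally $G$ is $\I$-perfect, for any nonempty open $O$ meeting $G$ contains, for a point $x\in G\cap O$, its branch $b$, and large $n$, the set $G_{b\har n}\subseteq G\cap O$ with $G_{b\har n}\notin\I$. Since $\I$ is generated by closed sets, a cover of $G$ by closed sets from $\I$ would, by the Baire category theorem in the Polish space $G$, contain a member with nonempty interior in $G$, contradicting $\I$-perfectness; so $G\notin\I$, and $G$ is the desired $\I$-positive $\gdelta$ subset of $A$.

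\textbf{The main obstacle.} The heart of the argument is the recursion step, where at each node one must simultaneously (i) refine towards $C$ so that the body ends up inside $A$, (ii) keep every promise $\I$-perfect, (iii) arrange that the successor promises recapture $A_s$ up to an $\I$-set, so that $\I$-positivity descends to $G$, and (iv) keep the open hulls small and locally finite. The reconciliation rests on the $\sigma$-additivity of $\I$, which renders every error along the way negligible, together with the $\I$-condensation derivative, which restores $\I$-perfectness after each restriction. The local finiteness in (iv) — not automatic, since $X$ need not be zero-dimensional and so the $U_s$ on one level cannot be taken pairwise disjoint — is precisely what preserves König's lemma for the verification that $G\subseteq A$.
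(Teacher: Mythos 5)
The paper cites this theorem from Solecki \cite{Sol.Cov} and does not reproduce a proof, so there is nothing in the text to compare your argument against; I can only assess it on its own terms.

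Your Step~1, the bookkeeping of the $\I$-negligible losses, and the closing Baire-category argument are all sound, and the general architecture (condensation derivative to reach $\I$-perfectness, a scheme of shrinking open hulls in $X$ paired with shrinking clopen sets in $\baire$ tracking the closed set $C$ above $A$) is the right kind of thing. The gap is the unproved assertion in Step~2 that the hulls $\{U_{s^\frown i}\}_i$ can be chosen locally finite in $U_s$, and I do not think the recipe you describe can deliver it. The successor closed sets $P_{s^\frown i}=\cl{E_k^*}$ must cover $A_s$ modulo $\I$ and must simultaneously be subordinate to a partition of $Q_s$ into small clopen pieces $W_k$; a point $x\in P_s$ whose fibre $C_x\cap Q_s$ meets many $W_k$'s (for instance is dense in $Q_s$) will typically lie in $P_{s^\frown i}$ for infinitely many $i$, hence in infinitely many $U_{s^\frown i}$ no matter how the hulls are shrunk. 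Paracompactness does not help: it gives locally finite refinements, but after ``boxifying'' and projecting to $X$ the refined cover is no longer locally finite. Once even point-finiteness of $\{U_{s^\frown i}\}_i$ fails, the tree $\{s : x\in\cl{U_s}\}$ can be infinite and infinitely branching with no infinite branch (the tree of strictly decreasing finite sequences is a model case), so K\"onig's lemma supplies no branch $b$, and without a branch the Cantor-intersection argument producing a point of $C$ above $x$ collapses; you lose exactly the inclusion $G\subseteq A$. (The $\I$-perfectness of $G$ survives without any branch, since for $x\in G\cap O$ one only needs, at a large enough level $n$, some $s$ with $x\in U_s$ and $\diam U_s$ small, giving $G_s\subseteq O\cap G$.) You flag local finiteness yourself as ``the main obstacle,'' but the sketch never explains how to overcome it, and overcoming it is really the heart of Solecki's theorem: one needs either a branch-selection device that does not rely on finite branching (e.g.\ carrying enough disjointness on the $\baire$-side so that the scheme lives in $X\times\baire$ and projects nicely), or a genuine disjointification of the successor pieces, and neither is present here.
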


The above theorem has important forcing consequences. For
example it implies that if $\I$ is generated by closed sets,
then the forcing $\P_\I$ is equivalent to the forcing
$\Q_\I$ with analytic $\I$-positive sets, as $\P_\I$ is
dense in $\Q_\I$.

\medskip

Definability of a $\sigma$-ideal usually relies on the
property called \textit{$\coana$ on $\ana$} (cf.
\cite[Definition 35.9]{Kechris}), which says that for any
analytic set $A\subseteq X^2$, the set $\{x\in X:
A_x\in\I\}$ is coanalytic. It is worth mentioning that by
classical results of Mazurkiewicz \cite[Theorem
29.19]{Kechris} and Arsenin--Kungui \cite[Theorem
18.18]{Kechris}, both the $\sigma$-ideal of countable
subsets of $\cantor$ and the $\sigma$-ideal $\ksigma$ on
$\baire$ are $\coana$ on $\ana$.

It is well-known (see \cite[Theorem 35.38]{Kechris}) that if
$\K$ is a hereditary and coanalytic (in the sense of the
Effros space) family of closed subsets of $X$, then the
$\sigma$-ideal generated by $\K$ is $\coana$ on $\ana$. As
an application of idealized forcing, we show a new proof of
this fact.

\medskip

In \cite[Section 5.1]{Zpl:FI} Zapletal developed a general
theory of iteration for idealized forcing. There are, of
course, some natural restrictions which the $\sigma$-ideal
$\I$ should fulfill if we want to consider the iterations of
$\P_\I$. These restrictions include definability of $\I$ and
the fact that $\P_\I$ remains proper in all forcing
extensions.  In \cite[Definitions 5.1.2 and 5.1.3]{Zpl:FI}
Zapletal defines \textit{iterable} $\sigma$-ideals and
develops the theory of iteration for this class of
$\sigma$-ideals. 

Note that if $\I$ is a $\coana$ on $\ana$ $\sigma$-ideal,
then it makes sense to define $\I^W$ in any model $W$ of ZFC
(containing the parameters of the definition) as the family
of analytic sets satisfying the $\coana$ definition of $\I$.
In this paper we prove the following result.

\begin{theorem}\label{absoluteness}
  Let $\I$ be a $\coana$ on $\ana$ $\sigma$-ideal.  If $\I$
  is generated by closed sets in $V$, then $\I^W$ is
  generated by closed sets in all forcing extensions
  $V\subseteq W$.
\end{theorem}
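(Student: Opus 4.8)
The plan is to argue in the forcing extension $W$ using Solecki's dichotomy (Theorem \ref{Sol.dich}) together with absoluteness, reducing the problem to showing that membership in $\I^W$ coincides with being covered by countably many closed sets from the ``$\K$-side'' of the $\coana$ definition. First I would isolate the hereditary coanalytic family $\K$ of closed sets that generates $\I$ in $V$: since $\I$ is $\coana$ on $\ana$, the set $\K=\{F\in F(X):F\in\I\}$ (coded in the Effros Borel space) is coanalytic, hereditary under closed subsets, and by hypothesis the $\sigma$-ideal it generates is exactly $\I$. In $W$, define $\K^W$ by the same coanalytic formula. The key point is that ``$F$ is a closed set covered by countably many members of $\K$'' is itself a $\coana$ (indeed, essentially $\ana$-over-$\coana$) property of a code for $F$, and one checks by $\coana$-absoluteness (Mostowski absoluteness for $\lcoana$, and the fact that the defining tree is in $V$) that this property is upward absolute between $V$ and $W$; moreover, because $\I$ is generated by $\K$ in $V$, the $\coana$ definition of $\I$ provably (in $\ZFC$, hence in $W$) agrees with ``covered by countably many members of $\K$'' — this is where I would invoke that the statement ``for every analytic $A$, $A\in\I$ iff $A$ is covered by countably many closed sets in $\K$'' is a $\lnegshoenfield$ statement about the parameters, hence absolute by Shoenfield.

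Next I would run the following dichotomy argument in $W$. Let $A$ be an analytic subset of $X$ in $W$, with analytic code $c$. By Solecki's dichotomy applied in $W$ to the $\sigma$-ideal $\J$ generated by $\K^W$ (which is generated by closed sets in $W$ by construction), either $A\in\J$, in which case $A$ is covered by countably many closed sets, each in $\K^W$, so $A\in\I^W$ by the agreement established above; or else $A$ contains a $\J$-positive $\gdelta$ set $G$. In the latter case I must show $A\notin\I^W$, i.e. the $\coana$ formula fails for $c$. Suppose toward a contradiction that $A\in\I^W$; then by the agreement, $A$, and hence $G$, is covered by countably many members of $\K^W$, contradicting $\J$-positivity of $G$. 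Thus $\I^W$ equals the $\sigma$-ideal generated by $\K^W$, which is generated by closed sets in $W$.

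The main obstacle, and the step deserving the most care, is the verification that the $\coana$ definition of $\I$ continues to define ``covered by countably many members of the coanalytic family $\K$'' in the extension $W$ — i.e. that the equivalence does not break when new reals are added. The clean way to see this is to express the equivalence as a single second-order arithmetic statement with parameters in $V$: ``for all analytic codes $c$, [the $\coana$ formula holds of $c$] $\leftrightarrow$ [there exists a sequence $(F_n)$ of closed sets with each $F_n$ in $\K$ and $A_c\subseteq\bigcup_n F_n$].'' The forward direction of this biconditional is $\lnegshoenfield$ (a $\Pi^1_1$ hypothesis implying a $\Sigma^1_2$ conclusion), and the backward direction is likewise $\lnegshoenfield$; so the whole statement is $\lnegshoenfield$ (after routine manipulation, $\Pi^1_2$), hence by the Shoenfield absoluteness theorem it holds in $W$ since it holds in $V$. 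One should double-check that ``$F_n\in\K$'' for the coanalytic $\K$ indeed only contributes a $\coana$ clause and does not spoil the complexity count, and that the coding of the countable covering family is uniform; these are routine but must be stated. Once this is in place, the dichotomy argument above goes through verbatim in $W$, completing the proof.
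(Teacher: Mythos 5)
Your overall strategy --- isolate the hereditary $\coana$ family $\K$, transfer a biconditional characterizing $\I$ in terms of $\K$-coverings via Shoenfield absoluteness, then apply Solecki's dichotomy in $W$ --- has the right shape and is close to the paper's argument, but there is a genuine gap in the complexity count, and it lands precisely on the step you flag as ``routine but must be stated.'' The formula ``$\tilde A_c$ is covered by countably many closed sets in $\K$'' is $\shoenfield$, not $\ana$: one must existentially quantify over a real $s$ coding a sequence of closed sets and assert $\forall n\ \kform(s_n)$, where $\kform$ is $\coana$; this gives a formula of the shape $\exists s\,(\coana)$, i.e.\ $\shoenfield$. The forward direction of your biconditional is therefore $\forall c\,(\iform(c)\rightarrow\shoenfield(c))$, which rewrites as $\forall c\,(\ana(c)\vee\shoenfield(c))=\forall c\,\shoenfield(c)$, and that is $\mathbf{\Pi}^1_3$, not $\negshoenfield$. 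Shoenfield absoluteness does not reach $\mathbf{\Pi}^1_3$, so this half of the equivalence does not transfer --- and it is exactly the half your dichotomy step needs in order to conclude, from a $\J$-positive $\gdelta$ subset of $A$, that $A\notin\I^W$. (Your computation for the backward direction is correct; that half really is $\negshoenfield$.)

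The paper closes this gap by replacing the $\shoenfield$ right-hand side with a genuinely $\coana$ surrogate $\iform'$, so that $\forall t\,(\iform(t)\leftrightarrow\iform'(t))$ is honestly $\negshoenfield$ and Shoenfield applies. In the first proof, $\iform'(v)$ negates the existence of a $\gdelta$ set $G$ whose closure is $\kform$-perfect and which admits a partial continuous function $f:G\xrightarrow{*}D_v$ (with $D$ a closed set projecting onto $\tilde A$); Lemma \ref{iperfectgdelta}(iii) and the coding of partial continuous functions keep the inner matrix $\ana$, so $\iform'$ is $\coana$. In the alternative proof, the covering is restricted to $\lbor(t)$-definable closed sets from $\K$; Lemma \ref{effective}, via the First Reflection Theorem and a tree construction, shows this restricted covering is equivalent to the unrestricted one, and the associated set $F$ is $\lcoana$, again yielding a $\coana$ $\iform'$. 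In either version it is precisely this descent from $\shoenfield$ down to $\coana$ that makes the Shoenfield transfer legitimate; it is not a routine manipulation but the key technical content your proposal is missing.
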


Notice that as a corollary, we get that if $\I$ is $\coana$
on $\ana$ generated by closed sets, then $\P_\I$ is proper
in all forcing extensions.

\medskip

Kanovei and Zapletal \cite[Theorem 5.1.9]{Zpl:FI} proved
that if $\I$ is iterable and $\coana$ on $\ana$, then for
each $\alpha<\omega_1$, any analytic set $A\subseteq
X^\alpha$ either belongs to $\I^\alpha$ (the $\alpha$-th
Fubini power of $\I$), or else contains a special kind of
Borel $\I^\alpha$-positive set. In this paper we extend this
result and prove an analogue of the Solecki theorem for the
Fubini products of $\coana$ on $\ana$ $\sigma$-ideals
generated by closed sets. Namely, we re-introduce the notion
of $\bar\I$-positive cubes, when $\langle \I_\beta:
\beta<\alpha\rangle$ is a sequence of $\sigma$-ideals,
$\langle X_\beta:\beta<\alpha\rangle$ is a sequence of
Polish spaces, $\I_\beta$ on $X_\beta$ respectively, and
$\alpha$ is a countable ordinal.  (similar notions have been
considered by other authors under various different names,
cf.  \cite{Kanovei}, \cite{CPA} or \cite{Zpl:FI}). We prove
the following (for definitions see Section
\ref{sec:productsanditerations}).

\begin{theorem}\label{iterationgdelta}
  Let $\langle X_n:n<\omega\rangle$ be a sequence of Polish
  spaces and $\bar \I=\langle \I_n:n<\omega\rangle$ be a
  sequence of $\coana$ on $\ana$ $\sigma$-ideals generated
  by closed sets, $\I_n$ on $X_n$, respectively. If
  $A\subseteq\prod_{n<\omega} X_n$ is $\ana$, then
  \begin{itemize}
  \item either $A\in\bigotimes_{n<\omega}\I_n$,
  \item or else $A$ contains an $\bar\I$-positive $\gdelta$
    cube.
  \end{itemize}
\end{theorem}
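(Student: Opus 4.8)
The plan is to prove Theorem~\ref{iterationgdelta} by induction on $\omega$ via a finite-support-style reduction, combining the one-dimensional Solecki dichotomy (Theorem~\ref{Sol.dich}) with the tree/fusion machinery of Theorem~\ref{axiomaclosed} and the absoluteness provided by Theorem~\ref{absoluteness}. First I would set up the ``cube'' framework explained in Section~\ref{sec:productsanditerations}: an $\bar\I$-positive $\gdelta$ cube is (roughly) a $\gdelta$ set $C\subseteq\prod_{n}X_n$ together with a scheme of $\I_n$-positive $\gdelta$ sections, so that projecting to the first $n$ coordinates always leaves an $\I_n$-positive fiber over an $\bar\I$-positive cube in the remaining coordinates. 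The key observation is that, by Theorem~\ref{axiomaclosed}, each forcing $\P_{\I_n}$ is equivalent to a forcing with trees having fusion, and (Theorem~\ref{Sol.dich}) every $\I_n$-positive analytic set contains an $\I_n$-positive $\gdelta$ set; so the ``generic real'' for each coordinate can be captured inside a $\gdelta$ cube built by a fusion argument. The induction engine is the iteration theorem of Kanovei--Zapletal \cite[Theorem 5.1.9]{Zpl:FI} (which gives Borel positive sets for the finite Fubini powers), upgraded to $\gdelta$ cubes using the one-dimensional Solecki dichotomy fiberwise.

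The main steps, in order, are as follows. \emph{Step 1.} Reduce to the case where $A$ is closed after passing to a suitable Polish topology: using continuous reading of names (Theorem~\ref{genclosed}) relativized to each coordinate, refine the topology on each $X_n$ so that $A$ becomes $\gdelta$ (indeed $\closed$) while keeping each $\I_n$ generated by closed sets in the new topology — here Theorem~\ref{absoluteness} guarantees the ideals stay generated by closed sets, which is exactly what lets the argument be iterated. \emph{Step 2.} Run the dichotomy on the first coordinate: consider the analytic set $B=\{x_0\in X_0 : A_{x_0}\notin\bigotimes_{1\le n}\I_n\}$; by the $\coana$-on-$\ana$ hypothesis (closed under Fubini products, by \cite[Theorem 35.38]{Kechris} and the absoluteness in Theorem~\ref{absoluteness}) this set is analytic, so by Theorem~\ref{Sol.dich} either $B\in\I_0$ — whence Fubini gives $A\in\bigotimes_n\I_n$ and we are in the first case — or $B$ contains an $\I_0$-positive $\gdelta$ set $G_0$. \emph{Step 3.} Over each $x_0\in G_0$ the section $A_{x_0}$ is analytic and $\bigotimes_{1\le n}\I_n$-positive, so by the inductive hypothesis it contains an $\bar\I\!\restriction\![1,\omega)$-positive $\gdelta$ cube $C_{x_0}$. \emph{Step 4 (fusion amalgamation).} The cubes $C_{x_0}$ vary over $x_0$; to assemble them into a single $\gdelta$ cube I would use a fusion game on $\P_{\I_0}$ (as in the proof of Theorem~\ref{axiomaclosed}): shrink $G_0$ along a fusion sequence so that on a residual $\I_0$-positive $\gdelta$ subset the assignment $x_0\mapsto C_{x_0}$ is continuous in the Effros/Vietoris sense, using that $\gdelta$ cubes can be coded by elements of a Polish space and that the ``has an $\I_n$-positive $\gdelta$ section'' relation is $\coana$-on-$\ana$ hence admits a Borel uniformization on the positive part (Jankov--von Neumann / $\coana$-on-$\ana$ selection). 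The resulting diagonal object $C=\bigcup_{x_0}\{x_0\}\times C_{x_0}$ is then checked to be $\gdelta$ and $\bar\I$-positive by unwinding the cube definition coordinate by coordinate.

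The hard part will be Step~4: the amalgamation of the fiberwise $\gdelta$ cubes into one $\gdelta$ cube. The naive union need not be $\gdelta$, and continuity of $x_0\mapsto C_{x_0}$ is not automatic; this is where the fusion structure from Theorem~\ref{axiomaclosed} must be exploited rather than just properness, essentially because fusion lets one commit to finite approximations of all the fibers simultaneously and take limits. I expect the argument here to parallel the proof of \cite[Theorem 5.1.9]{Zpl:FI} closely, with the one new ingredient being that each layer is a $\gdelta$ set rather than merely Borel — which is precisely supplied by applying Theorem~\ref{Sol.dich} inside the fusion at every finite stage, and is legitimate in all intermediate extensions by Theorem~\ref{absoluteness}. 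The other point requiring care is definability bookkeeping: one must verify that the displayed set $B$ in Step~2 (and its analogues at each stage) really is analytic, which follows from the fact that $\coana$-on-$\ana$ $\sigma$-ideals generated by closed sets are closed under countable Fubini products in the appropriate effective sense, again leaning on Theorem~\ref{absoluteness}.
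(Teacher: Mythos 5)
Your proposal identifies several of the right ingredients (reduction via Kanovei--Zapletal, fiberwise application of the Solecki dichotomy, Jankov--von~Neumann selection, the coding of $\gdelta$ sets), but there are two genuine gaps.

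\emph{Circularity of the induction.} In Step~3 you invoke ``the inductive hypothesis'' for the section $A_{x_0}\subseteq\prod_{1\le n}X_n$ to produce an $\bar\I\!\restriction\![1,\omega)$-positive $\gdelta$ cube $C_{x_0}$. But that tail is again an $\omega$-length Fubini product; you are applying the full strength of the theorem you are trying to prove. There is no base case, and no decreasing parameter. The paper avoids this: it builds $\gdelta$ sets $G_n\subseteq\prod_{i<n}X_i$ on \emph{finite} products by recursion on $n$, using at each stage only the finite Fubini products $\bigotimes_{i<n}\I_i$ (for which the dichotomy follows from Kanovei--Zapletal plus one application of Solecki), and then defines $G=\bigcap_n\pi_{\omega,n}^{-1}[G_n]$. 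No appeal to an $\omega$-length ``inductive hypothesis'' is ever made.

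\emph{The amalgamation and the positivity check.} You correctly flag Step~4 as the hard point but propose to resolve it by ``fusion on $\P_{\I_0}$,'' essentially to force the selector $x_0\mapsto C_{x_0}$ continuous. The paper does not use fusion here at all; it uses the Jankov--von~Neumann theorem to get a $\sigma(\ana)$-measurable selector $g:G_n\to\G_{n+1}$ into the space of codes of $\I_{n+1}$-perfect $\gdelta$ sets, then a plain Baire-category argument to restrict to a dense $\gdelta$ subset $G_n'$ on which $g$ is continuous, and sets $G_{n+1}=(g,\mathrm{id})^{-1}[\tilde G]$. The crucial and entirely missing ingredient in your outline is the \emph{Kuratowski--Ulam lemma for continuous open surjections} (Lemma~\ref{KurUlam} in the paper): this is what propagates the positivity statements through the tower, first to show each $G_n$ meets every full cube comeagerly, and then, after passing to the comeager sets $H_n^k$ and $H_n$, to verify that the limit $G$ really is an $\bar\I$-positive $\gdelta$ cube. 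Without some replacement for this category argument, it is not clear why your proposed diagonal object $C$ is $\bar\I$-positive, and this is exactly where the strengthened form (Theorem~\ref{iterationgdelta1}, with meagerness of the traces of $\bigotimes_n\I_n$) comes from. Finally, Step~1's topology refinement is both unnecessary and not quite correct: changing the topology on each $X_n$ does not make the analytic subset $A$ of the product $\gdelta$ in the product of the refined topologies; the reduction the paper actually uses is to assume $A$ is an $\bar\I$-positive $\Bor$ cube, via Theorem~\ref{KanZpl}.
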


The proof of the above theorem relies on a device for
parametrizing $\gdelta$ sets in Polish spaces, which is used
to define descriptive complexity of families of $\gdelta$
sets.

\medskip

In the last two sections we study examples of
$\sigma$-ideals generated by closed sets, motivated by
analysis and measure theory.

First, we investigate an example from measure theory. Let
$\E$ denote the $\sigma$-ideal generated by closed null sets
in the Cantor space. $\E$ has been investigated by
Bartoszy\'nski and Shelah \cite{BaSh}.  In Theorem
\ref{nocohene} we show that $\P_\E$ does not add Cohen
reals, which implies that any forcing extension with $\P_\E$
is minimal. In Corollary \ref{coregame} we establish a
fusion game for the $\sigma$-ideal $\E$.

Next we study an example from the theory of real functions.
Let $X$ and $Y$ be Polish. To any function $f:X\rightarrow
Y$ we associate the $\sigma$-ideal $\I^f$ (on $X$) generated
by closed sets on which $f$ is continuous. The
$\sigma$-ideal $\I^f$ is nontrivial if and only if $f$
cannot be decomposed into countably many continuous
functions with closed domains, i.e. $f$ is not
\textit{piecewise continuous}.  Piecewise continuity of
Baire class 1 functions has been studied by several authors
(Jayne and Rogers \cite{JR}, Solecki \cite{Sol.Dec},
Andretta \cite{Andretta}).  In Corollary
\ref{piececonmiller} we show that if $f:X\rightarrow\baire$
is Baire class 1, not piecewise continuous, then the forcing
$\P_{\I^f}$ is equivalent to the Miller forcing.  In
Corollary \ref{corpiececongame} we establish a fusion game
for the $\sigma$-ideal $\I^f$ when
$f:\cantor\rightarrow\cantor$ is Borel.

\medskip

\textbf{Acknowledgements}. I would like to thank Janusz
Pawlikowski for all the helpful remarks and stimulating
discussion.  I am indebted to Stevo Todor\v cevi\'c for
suggesting to generalize Theorem \ref{iterationgdelta} from
Fubini powers of one $\sigma$-ideal to arbitrary Fubini
products. I would also like to thank Sy Friedman, Tam\'as
M\'atrai and Jind\v rich Zapletal for many useful comments.

\section{Notation}

All Polish spaces in this paper are assumed to be
recursively presented.

If $T\subseteq Y^{<\omega}$ is a tree, then we say that $T$
is a \textit{tree on $Y$}. We write $\lim T$ for $\{x\in
Y^\omega:\ \forall n<\omega\ x\har n\in T\}$.  $\Lev_n(T)$
stands for the set of all elements of $T$ which have length
$n$. If $\tau\in T$, then we write $T(\tau)$ for the tree
$\{\sigma\in T:
\sigma\subseteq\tau\,\vee\,\tau\subseteq\sigma\}$. We write
$[\tau]_T$ for $\{x\in\lim T: \tau\subseteq x\}$ (when $T$
is clear from the context, like $\btree$ or $\ctree$, then
we write only $[\tau]$). We say that $\tau\in T$ is a
\textit{stem of $T$} if $T=T(\tau)$ and $\tau$ is maximal
such. We say that $F\subseteq T$ is a \textit{front of $T$}
if $F$ is an antichain in $T$ and for each $x\in\lim T$
there is $n<\omega$ such that $x\har n\in F$.

By a \textit{$\sigma$-ideal} we mean a family
$\I\subseteq\mathcal{P}(X)$ closed under subsets and
countable unions. We say that a set $B\subseteq X$ is
\textit{$\I$-positive} if $B\not\in\I$. A
\textit{$\sigma$-ideal of analytic sets} is a family of
analytic sets closed under analytic subsets and countable
unions. A \textit{$\sigma$-ideal of closed sets} is defined
analogously. If $\I$ is a $\sigma$-ideal and $B$ is an
$\I$-positive set, then we write $\I\har B$ for $\{A\cap B:
A\in\I\}$.

\section{Fusion games}\label{sec:BanachMazur}

In this section we briefly recall basic definitions
concerning infinite games and introduce \textit{fusion
  games} for $\sigma$-ideals. 

By a \textit{game scheme} we mean a set of rules for a
two-player game (the players are called Adam and Eve, and
Adam begins). Formally, a \textit{game scheme} is a pruned
tree $G\subseteq Y^{<\omega}$ for a countable set $Y$, where
the last elements of sequences at even and odd levels are
understood as possible moves of Eve and Adam, respectively.
In particular, in any game scheme the first move is made by
Adam (the moves are numbered by $\omega\setminus\{0\}$).
Nodes of the tree $G$ of even length are called
\textit{partial plays} and elements of $\lim G$ are called
\textit{plays}. Note that partial plays always end with a
move of Eve.

If $\tau$ is a partial play in a game scheme $G$, then by
the \textit{relativized game scheme} $G_\tau$ we mean the
tree $\{\sigma\in Y^{<\omega}: \tau^\smallfrown\sigma\in
G\}$.  The game scheme $G_\tau$ consists of the games which
``continue'' the partial play $\tau$.

A \textit{payoff set} $p$ in a game scheme $G$ is a subset
of $\lim G$.  By a \textit{game} we mean a pair $(G,p)$
where $G$ is a game scheme and $p$ is a payoff set in $G$
(we say that the game $(G,p)$ is \textit{in the game scheme}
$G$). For a game $(G,p)$ we say that Eve \textit{wins} a
play $g\in\lim G$ if $g\in P$. Otherwise we say that Adam
\textit{wins} $g$.

A \textit{strategy for Adam} in a game scheme $G$ is a
subtree $S\subseteq G$ such that
\begin{itemize}
\item for each odd $n\in\omega$ and $\tau\in S$ such that
  $|\tau|=n$ the set of immediate successors of $\tau$ in
  $S$ contains precisely one point,
\item for each even $n\in\omega$ and $\tau\in S$ such that
  $|\tau|=n$ the sets of immediate successors of $\tau$ in
  $S$ and $G$ are equal.
\end{itemize}
\textit{Strategy for Eve} is defined analogously. If $(G,p)$
is a game in the game scheme $G$ and $S$ is a strategy for
Adam in $G$, then we say that $S$ is a \textit{winning
  strategy for Eve} in the game $(G,p)$ if $\lim S\subseteq
p$. \textit{Winning strategy for Adam} is defined
analogously.

\medskip

Recall the classical Banach--Mazur game \cite[Section
8.H]{Kechris} which ``decides'' whether a Borel set is
meager nor not, in terms of existence of a winning strategy
for one of the players.  Now we introduce an abstract notion
of a fusion game which will cover the classical examples as
well as those from Sections \ref{sec:E} and
\ref{sec:Piececon}. Suppose we have a game scheme $G$
together with a family of payoff sets $p(A)$ for each
$A\subseteq X$ such that
\begin{itemize}
\item[(i)] $p(A)\subseteq\lim G$ is Borel, for each Borel set
  $A\subseteq X$,
\item[(ii)] $p(A)\subseteq p(B)$ for each $B\subseteq A$,
\item[(iii)] $p(\bigcup_{n<\omega} A_n)=\bigcap_{n<\omega}
  p(A_n)$ for each sequence $\langle A_n:n<\omega\rangle$.
\end{itemize}
Intuitively, $p(A)$ is such that a winning strategy for Eve
in $(G,p(A))$ ``proves'' that $A$ is ``small''. For each
$A\subseteq X$ the game $G(A)$ is the game in the game
scheme $G$ with the payoff set $p(A)$. We denote by
$G(\cdot)$ the game scheme $G$ together with the function
$p$.  We call $G(\cdot)$ a \textit{fusion scheme} if
\begin{itemize}
\item[(iv)] the moves of Adam code (in a prescribed way, in
  terms of a fixed enumeration of the basis) basic open sets
  $U_n$ such that $\cl{U_{n+1}}\subseteq U_n$ and
  $\diam(U_n)<1\slash n$,
\item[(v)] for each Borel set $A\subseteq X$ and each play
  $g$ in $G(A)$ if Adam wins $g$, then the single point in
  the intersection of $U_n$'s (as above) is in $A$.
\end{itemize}

Notice that if the family of sets $q\subseteq\lim G$ such
that Eve has a winning strategy in the game $(G,q)$ is
closed under countable intersections, then the family of
sets $A\subseteq X$ such that Eve has a winning strategy in
$G(A)$ forms a $\sigma$-ideal (by (iii)).

The idea of considering $\sigma$-ideals defined in terms
of a winning strategy in a game scheme occurs in a paper of
Schmidt \cite{Schmidt} and later in a work of Mycielski
\cite{Mycielski}.

If the family of sets $A\subseteq X$ for which Eve has a
winning strategy in $G(A)$ forms a $\sigma$-ideal $\I$, then
we say that $G(\cdot)$ is a \textit{fusion scheme for $\I$}.

Suppose $X=\lim T$ for some countable tree $T$. Suppose also
that the game scheme $G$ is such that the possible $n$-th
moves of Adam correspond to elements at the $n$-th level of
$T$ (like in (iv), to the basic clopen sets $[\tau]_T$ for
$\tau\in\Lev_n(T)$). Let $G_\I(\cdot)$ be a fusion scheme
for a $\sigma$-ideal $\I$ and let $\tau$ be a partial play
in the game scheme $G_\I$. Let $U$ be the basic clopen set
coded by the last move of Adam in $\tau$. Recall that the
relativized game scheme $(G_\I)_\tau$ consists of the
continuations of $\tau$ in $G_\I$. The game scheme
$(G_\I)_\tau$ together with the function $p_\tau(A)=p(A)\cap
U$ defines a \textit{relativized fusion scheme}. Using the
property (v) we easily get the following.

\begin{proposition}\label{relativegame}
  Let $X=\lim T$, $G_\I$, $\tau$ and $U$ be as above. Let
  $A\subseteq X$. Eve has a winning strategy in
  $(G_\I)_\tau(A)$ if and only if Eve has a winning strategy
  in $G_\I(A\cap U)$.
\end{proposition}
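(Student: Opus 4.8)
The plan is to prove both implications directly, after one elementary reduction. First observe that by (ii) we have $p(A)\subseteq p(A\cap U)$, and by (iii) applied to the sequence $\langle A\cap U,A\setminus U,A\setminus U,\dots\rangle$ we have $p(A)=p(A\cap U)\cap p(A\setminus U)$; combining this with (v) for the set $A\setminus U$ shows that a play lies in $p(A)$ if and only if it lies in $p(A\cap U)$ whenever its associated point (the unique point of $\bigcap_n U_n$) belongs to $U$. Since, by the nestedness demanded in (iv), every Adam move in $(G_\I)_\tau$ codes a basic clopen subset of $U$, every play of $(G_\I)_\tau$ has its point in $U$; hence $p_\tau(A)=p_\tau(A\cap U)$ and the games $(G_\I)_\tau(A)$ and $(G_\I)_\tau(A\cap U)$ coincide. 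So it is enough to treat the case $A\subseteq U$ and prove that Eve wins $(G_\I)_\tau(A)$ if and only if Eve wins $G_\I(A)$.

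For the implication from left to right, let $S$ be a winning strategy for Eve in $(G_\I)_\tau(A)$. In $G_\I(A)$ Eve plays the Eve-moves of $\tau$ as long as Adam plays the Adam-moves of $\tau$; these Adam-moves trace a path in $T$ to the node $\sigma$ with $[\sigma]_T=U$, so as soon as Adam departs from this path the clopen set he codes, and therefore every later one, is disjoint from $U\supseteq A$, whence the resulting point avoids $A$ and Adam loses the play by (v) no matter how Eve continues. If Adam instead reaches $\sigma$, the position is exactly $\tau$ and Eve passes to $S$; then the play she produces lies in $p_\tau(A)$, which means precisely that the whole play lies in $p(A)$. Thus Eve wins $G_\I(A)$; this direction uses only (iv), (v) and the definition of $p_\tau$.

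For the converse, let $S$ be a winning strategy for Eve in $G_\I(A)$, and run $S$ against the opening in which Adam plays exactly the Adam-moves of $\tau$, arriving at a partial play $\tau'$ consistent with $S$ whose last Adam move again codes $U$; the part of $S$ lying below $\tau'$ is then a winning strategy for Eve in $(G_\I)_{\tau'}(A)$. The point that needs care — and this is the main obstacle — is to see that this continuation game, and the question whether Eve wins it, depends only on the clopen set $U$ coded by the last Adam move, and not on which Eve-moves occurred in the prefix $\tau'$ as opposed to $\tau$. This is the structural content of a fusion scheme: by (iv) and the diameter condition Adam's moves are just the nodes of $T$ and they alone determine the descending clopen sets and hence the final point, so by (v) the payoff restricted to the continuations of a partial play is controlled by $U$ and $A$, while Eve's moves are refinements whose legality is likewise keyed to the current clopen set. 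Granting this, $(G_\I)_{\tau'}(A)$ and $(G_\I)_\tau(A)$ are the same game, so Eve wins the latter; and wherever one fears the payoff still detects the Eve-prefix, one invokes (v) once more: a play of $(G_\I)_\tau(A)$ and the corresponding $S$-consistent play of $G_\I(A)$ share all of Adam's moves, hence the same point $x\in U$, and if $x\notin A$ then (v) already gives that play to Eve. The two directions together prove the proposition.
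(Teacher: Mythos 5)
Your reduction to the case $A\subseteq U$ is correct, and so is the left-to-right implication: against an Adam who strays off the branch through the node coding $U$ the resulting point falls outside $U\supseteq A$, so (v) hands the play to Eve no matter what she does, while against an Adam who stays on the branch Eve replays $\tau$'s Eve-moves (legal, since $\tau$ is a partial play) and then switches to her $(G_\I)_\tau(A)$-strategy, and (ii) gives $p(A)\subseteq p(A\cap U)$.

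The converse is where the gap is, and you identify it yourself but do not close it. After running $S$ against $\tau$'s Adam-moves you land at a prefix $\tau'$, and the relativized game scheme $(G_\I)_{\tau'}$ need not coincide with $(G_\I)_\tau$: they are the sub-trees of $G_\I$ rooted at two different nodes, and nothing in (i)--(v) says the sub-tree below a partial play is determined by the Adam-prefix alone. Your resolution is precisely the assertion that Eve's legal moves and the payoff on continuations are ``keyed to the current clopen set,'' but that is an additional structural hypothesis, not a consequence of the definition of a fusion scheme; indeed the paper's own scheme $G_\pc$ in Section 7 violates it, since Eve's $n$th move there must extend her $(n-1)$th move, so the set of legal continuations genuinely depends on Eve's prefix. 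Your fallback appeal to (v) only settles the plays whose point is outside $A$, which is not where the difficulty lies --- (v) is one-sided and says nothing about plays whose point is in $A$, and that is exactly the case in which $p(A)$ could still see the Eve-prefix. To be fair, the paper dispatches this proposition with the single remark ``Using the property (v) we easily get the following,'' so it glosses over the same subtlety; but a complete proof of the converse requires either an explicit structural assumption on the scheme (e.g.\ that both the tree of legal continuations below a partial play and the membership of a play in $p(A)$ depend only on the Adam-moves) or a scheme-specific argument of the kind one in fact gives when the scheme is instantiated (as in Propositions \ref{gamechare} and \ref{piececongame}).
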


\section{Fusion in the forcing $\P_\I$}\label{sec:treerep}

In this section we give a proof of Theorem
\ref{axiomaclosed}.  The main ingredient here are fusion
schemes for $\sigma$-ideals generated by closed sets.

We now give an informal outline of the proof of Theorem
\ref{axiomaclosed}. The general idea is as follows: having a
$\sigma$-ideal $\I$ generated by closed sets, we find a
fusion scheme $G_\I(\cdot)$ for $\I$ such that the trees of
winning strategies in $G_\I(B)$ (for $B\in\P_\I$) determine
some analytic $\I$-positive sets. Moreover, for each
$B\in\P_\I$ the winning condition for Adam (the complement
of the payoff set) in $G_\I(B)$ is a $\gdelta$ set in $\lim
G_\I$. We consider the forcing with trees of winning
strategies for Adam in the games $G_\I(B)$ (for all
$B\in\P_\I$) and show that it is equivalent to the original
forcing $\P_\I$ (we in fact show that it is equivalent to
the forcing with $\I$-positive $\ana$ sets and then use
Theorem \ref{Sol.dich} to conclude that all three forcings
are equivalent). Using the fact that the winning conditions
in $G_\I(B)$ are the intersections of $\omega$ many open
sets, we define $\omega$-many fronts in the trees of winning
strategies (such that crossing the $n$-th front implies that
the game is in the $n$-th open set).  Now, using these
fronts as analogues of the splitting levels in the perfect
or superperfect trees, we define fusion in the forcing of
winning strategies for Adam.

Although the general idea is based on the above outline, we
will have to additionally modify the games in order to avoid
some determinacy problems. That is, instead of a fusion
scheme for $\I$ and the games $G_\I(B)$ we will use their
unfolded variant. We would like to emphasize that in many
concrete cases of $\sigma$-ideals (like in Sections
\ref{sec:E} or \ref{sec:Piececon}), we can use simpler
fusion schemes and the fusion from Theorem
\ref{axiomaclosed} can be simplified.

\begin{proof}[Proof of Theorem \ref{axiomaclosed}]
  To simplify notation we assume that the underlying space
  $X$ is the Baire space $\baire$. Pick a bijection
  $\rho:\omega\rightarrow\omega\times\omega$. $H_{\omega_1}$
  stadnds for the family of hereditarily countable sets (it
  will be used to make sure that the forcing we define is a
  set).

  \medskip

  We will use the following notation.
  \begin{itemize}
  \item Let $Y$ be an arbitrary set. If
    $\tau\in(\omega\times Y)^{<\omega}$, then by
    $\bar\tau\in\otr$ we denote the sequence of the first
    coordinates of the elements of $\tau$. Suppose $T$ is a
    tree on $\omega\times Y$. The map $\p_Y:\lim
    T\rightarrow\baire$ is defined as follows: if $t\in\lim
    T$ and $t\har n=\tau_n$, then
    $\p_Y(t)=\bigcup_{n<\omega}\bar\tau_n$.
  \item Let $z$ be arbitrary. If $\tau\in(\omega\times
    Y)^{<\omega}$ and $\tau=\langle a_i:i<|\tau|\rangle$,
    then by $\tau^z\in(\omega\times Y\times\{z\})^{<\omega}$
    we denote the sequence $\langle (a_i,z):
    i<|\tau|\rangle$.  If $T$ is a tree on $\omega\times Y$,
    then by $T^z$ we denote the tree $\{\tau^z:\tau\in T\}$
    on $\omega\times Y\times\{z\}$.
  \item If $Y=W\times Z$ and $\tau\in(\omega\times
    Y)^{<\omega}$, then by $\tau_W\in (\omega\times
    W)^{<\omega}$ we denote $\langle \pi_{\omega\times
      W}(a_i): i<|\tau|\rangle$ (where $\pi_{\omega\times
      W}: \omega\times W\times Z\rightarrow\omega\times W$
    is the projection to the first two coordinates). By
    $T_W$ we denote the tree $\{\tau_W: \tau\in T\}$.
  \end{itemize}

  \medskip

  For $Y\in H_{\omega_1}$ and a tree $T$ on $\omega\times Y$
  let $G_\I(Y,T)$ be the game scheme in which
  \begin{itemize}
  \item in his $n$-th turn Adam constructs $\tau_n\in T$
    such that $\tau_n\supsetneq\tau_{n-1}$
    ($\tau_{-1}=\emptyset$),
  \item in her $n$-th turn Eve picks a clopen set $O_n$ in
    $\baire$ such that
    \begin{displaymath}
      \proj[T(\tau_n)]\not\in \I\ \Rightarrow\ O_n\cap\proj[T(\tau_n)]\not\in \I.
    \end{displaymath}
  \end{itemize}
  By the end of a play, Adam and Eve have a sequence of
  closed sets $E_k$ in $\baire$ defined as follows:
  $$E_k=\cantor\setminus\bigcup_{i<\omega}
  O_{\rho^{-1}(i,k)}.$$ Put
  $x=\bigcup_{n<\omega}\bar\tau_n\in\baire$. Consider a
  payoff set in $G_\I(Y,T)$ such that Adam wins if and only
  if
  $$x\not\in\bigcup_{k<\omega} E_k.$$ In this proof, the game in the game 
  scheme $G_\I(Y,T)$ with the above payoff set will be also
  denoted by $G_\I(Y,T)$ (this should not cause confusion
  since we are not going to consider other payoff sets in
  the game scheme $G_\I(Y,T)$).

  \medskip

  Here is one more piece of notation.
  \begin{itemize}
  \item If $S$ is a subtree of the game scheme $G_\I(Y,T)$,
    then by $\hat S\subseteq T$ we denote the tree built
    from the moves of Adam in partial plays in $S$ (i.e. we
    forget about Eve's moves). We write $\proj[S]$ for
    $\proj[\hat S]$.
  \item If $Y'=Y\times Z$, $z\in Z$ is fixed and $T'$ is a
    tree on $Y'$ such that $T^z\subseteq T$, then by $S^z$
    we denote the subtree of $G_\I(Y',T')$, in which the
    moves $\tau$ of Adam are changed to $\tau^z$.
  \end{itemize}

  \begin{lemma}\label{unfoldgame}
    The game $G_\I(Y,T)$ is determined. Eve has a winning
    strategy in $G_\I(Y,T)$ if and only if
    $$\proj[T]\in \I.$$
  \end{lemma}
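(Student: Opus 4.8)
\emph{Plan.} I would derive the whole statement from two implications:
\textbf{(A)} if $\proj[T]\in\I$ then Eve has a winning strategy, and
\textbf{(B)} if $\proj[T]\notin\I$ then Adam has a winning strategy. Since these
two hypotheses are exhaustive and mutually exclusive, and since no play is won
by both players, (A) and (B) give at once that $G_\I(Y,T)$ is determined and
that Eve has a winning strategy precisely when $\proj[T]\in\I$; in particular no
appeal to Borel determinacy is needed. Implication (A) is easy: because $\I$ is
generated by closed sets, there are closed sets $F_k$ $(k<\omega)$ with
$\proj[T]\subseteq\bigcup_k F_k$, and for each $k$ clopen sets $O^k_i$
$(i<\omega)$ with $\bigcup_i O^k_i=\baire\setminus F_k$ (possible as
$\baire\setminus F_k$ is open). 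Let Eve ignore Adam, playing $O_n:=O^k_i$ in
round $n$, where $(i,k)=\rho(n)$; this is legal because
$\proj[T(\tau_n)]\subseteq\proj[T]\in\I$, so the premise of the constraint on
Eve's moves never holds. Then $E_k=F_k$ for all $k$, while
$x=\bigcup_n\bar\tau_n\in\proj[T]\subseteq\bigcup_k F_k=\bigcup_k E_k$, so Eve
wins.

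\emph{Implication (B).} The key ingredient is an extraction lemma: \emph{if
$\sigma\in T$ and $O\subseteq\baire$ is open with $O\cap\proj[T(\sigma)]\notin
\I$, then there is $\sigma'\in T$ with $\sigma'\supsetneq\sigma$,
$[\bar{\sigma'}]\subseteq O$ and $\proj[T(\sigma')]\notin\I$.} To prove it,
write $O=\bigsqcup_{s\in D}[s]$ for a countable antichain $D\subseteq\btree$
with $[s]\subseteq O$ for every $s\in D$, and note $\proj[T(\sigma)]\notin\I$.
If some $s\in D$ is an initial segment of $\bar\sigma$, then $[\bar\sigma]
\subseteq O$; since $\proj[T(\sigma)]$ is the union of the countably many sets
$\proj[T(\sigma'')]$ over the immediate successors $\sigma''$ of $\sigma$ in
$T$, one such $\sigma''$ is $\I$-positive and satisfies $[\bar{\sigma''}]
\subseteq[\bar\sigma]\subseteq O$. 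Otherwise every $s\in D$ comparable with
$\bar\sigma$ properly extends it, so $O\cap\proj[T(\sigma)]=\bigsqcup_{s\in
D,\,s\supsetneq\bar\sigma}\bigl([s]\cap\proj[T(\sigma)]\bigr)$; as this set is
$\I$-positive and the union countable, fix $s^*\in D$ with
$[s^*]\cap\proj[T(\sigma)]\notin\I$, and observe that
$[s^*]\cap\proj[T(\sigma)]$ is the union of the countably many sets
$\proj[T(\sigma')]$ with $\sigma'\in\Lev_{|s^*|}(T)$, $\sigma\subseteq\sigma'$
and $\bar{\sigma'}=s^*$; one of these $\sigma'$ is $\I$-positive, and then
$\sigma'\supsetneq\sigma$ (as $|\sigma'|=|s^*|>|\sigma|$) and
$[\bar{\sigma'}]=[s^*]\subseteq O$. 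Granting the lemma, Adam plays so as to keep
$\proj[T(\tau_n)]\notin\I$ throughout: for his first move he takes any nonempty
$\tau_1\in T$ with $\proj[T(\tau_1)]\notin\I$ (which exists because $\proj[T]$
is the union of the countably many $\proj[T(\sigma)]$, $\sigma\in\Lev_1(T)$);
and given $\tau_{n-1}$ with $\proj[T(\tau_{n-1})]\notin\I$ and Eve's reply
$O_{n-1}$, the constraint on Eve's moves forces
$O_{n-1}\cap\proj[T(\tau_{n-1})]\notin\I$, so the extraction lemma supplies
$\tau_n\supsetneq\tau_{n-1}$ in $T$ with $[\bar\tau_n]\subseteq O_{n-1}$ and
$\proj[T(\tau_n)]\notin\I$, which Adam plays. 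In the resulting play
$\bar\tau_n\subseteq x$, so $x\in[\bar\tau_n]\subseteq O_{n-1}$ for every
$n\ge2$; thus $x\in O_m$ for all but finitely many $m$, whence
$x\in\bigcup_i O_{\rho^{-1}(i,k)}=\baire\setminus E_k$ for every $k$, i.e.\
$x\notin\bigcup_k E_k$, and Adam wins.

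\emph{The main obstacle} is the extraction lemma, specifically the requirement
that Adam land \emph{inside} the clopen set $O_{n-1}$ with a single move while
keeping the relevant projection $\I$-positive: merely passing to an
$\I$-positive immediate successor, one level at a time, towards
$O_{n-1}\cap\proj[T(\tau_{n-1})]$ can converge to a branch lying outside
$O_{n-1}$, so one must use the decomposition of $O_{n-1}$ into basic clopen
pieces to jump directly onto one of them. The remaining points --- legality and
winning of Eve's strategy in (A), and legality and preservation of
$\I$-positivity along Adam's greedy moves in (B) --- are routine.
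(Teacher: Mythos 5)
Your proposal is correct and follows the same overall plan as the paper: derive determinacy together with the equivalence by exhibiting a winning strategy for Eve when $\proj[T]\in\I$ and for Adam when $\proj[T]\notin\I$, and obtain Adam's strategy by preserving the invariant $\proj[T(\tau_n)]\notin\I$ while landing $[\bar\tau_n]$ inside Eve's last clopen set. Your extraction lemma makes explicit the one step the paper dispatches in a single sentence, and the case analysis there (the $s\in D$ containing $\bar\sigma$ versus those properly extending it) is correct; the countability of $Y$, which you implicitly use to split $\proj[T(\sigma)]$ and $[s^*]\cap\proj[T(\sigma)]$ into countably many pieces, holds because $Y\in H_{\omega_1}$. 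The only place you diverge, and only cosmetically, is Eve's side: you have Eve carefully encode a covering $\proj[T]\subseteq\bigcup_k F_k$ so that $E_k=F_k$, while the paper's Eve simply plays $O_n=\emptyset$ in every round (legal since the antecedent of her constraint never fires), which forces $E_k=\baire$ for all $k$ and makes her win trivially. Both are correct, but the paper's choice is leaner since it doesn't need to produce or track the closed generators of $\I$ at all.
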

  \begin{proof}
    Suppose first that $\proj[T]\in \I$. Then Eve chooses
    $\emptyset$ in all her moves and wins the game.

    On the other hand, suppose that $\proj[T]$ is
    $\I$-positive. We define a winning strategy for Adam as
    follows. In his moves, Adam constructs $\tau_n\in T$ so
    that
    \begin{itemize}
    \item $[\bar\tau_n]\subseteq O_{n-1}$,
    \item $\proj[T(\tau_n)]\not\in \I$.
    \end{itemize}
    Suppose Adam is about to make his $n$-th move, his
    previous move is $\tau_{n-1}$ and the last move of Eve
    is $O_{n-1}$ ($O_{-1}=\emptyset$). Using the fact that
    $O_{n-1}\cap\proj[T(\tau_{n-1})]\not\in \I$, Adam picks
    $\tau_n\in T$ extending $\tau_{n-1}$ such that
    $[\bar\tau_n]\subseteq O_{n-1}$ and
    $\proj[T(\tau_n)]\not\in \I$. This is the strategy for
    Adam.  It is winning since after each play we have that
    $x\in O_n$ for each $n<\omega$, so in particular
    $x\in\bigcup_{k<\omega} E_k$.
  \end{proof}

  \begin{remark}\label{bigstrategy}
    Note that if $S$ is a winning strategy for Adam in the
    game $G_\I(Y,T)$, then for each partial play $\pi\in S$,
    we have $\proj[S(\pi)]\not\in \I$. This is because
    otherwise we could construct a counterplay to the
    strategy $S$. In particular, if $\tau$ is the last move
    of Adam in $\pi$, then we have $\proj[T(\tau)]\not\in
    \I$.
  \end{remark}

  Now we define the key notion in this proof. Let $\pi$ be a
  partial play in $G_\I(Y,T)$ of length $2l$, in which Eve
  chooses clopen sets $O_i$, for $i<l$, and Adam picks
  $\tau_{l-1}\in T$ in his last move.  Suppose that
  $Y'=Y\times Z$, $Z\in H_{\omega_1}$, $z\in Z$ is fixed and
  $T'$ is a tree on $\omega\times Y'$ such that
  $(\tau_{l-1})^z\in T'$.  By the \textit{relativized
    unfolded game} $G_\I(Y',T')^z_\pi$ we mean the game, in
  which
  \begin{itemize}
  \item in his $n$-th move Adam picks $\tau_{n+l}'\in T'$,
    $\tau_{n+l}'\supsetneq\tau_{n+l-1}'$
    ($\tau_{l-1}'=(\tau_{l-1})^z$),
  \item in her $n$-th move Eve picks a clopen set $O_{n+l}$
    in $\baire$ such that $$\proj[T'(\tau_{n+l}')]\not\in
    \I\ \Rightarrow\
    O_{n+l}\cap\proj[T'(\tau_{n+l}')]\not\in \I.$$
  \end{itemize}
  The payoff set is the same as in the unrelativized case,
  i.e. we use all $\langle O_n:n<\omega\rangle$ to define a
  sequence of closed sets $\langle E_k:k<\omega\rangle$, we
  put $x=\bigcup_{l-1\leq n<\omega}\bar{\tau_n'}$ and Eve wins if
  and only if $x\in\bigcup_{k<\omega} E_k$.

  With an analogous proof as in Lemma \ref{unfoldgame} we
  get the following lemma.

  \begin{lemma}\label{relunfoldgame}
    Suppose $\pi$ is a partial play in $G_\I(Y,T)$ and $\tau$
    is the last move of Adam in $\pi$. Let
    $G_\I(Y',T')^z_\pi$ be a relativized unfolded game. Eve
    has a winning strategy in $G_\I(Y',T')^z_\pi$ if and only
    if
    $$\proj [T'(\tau^z)]\in \I.$$
  \end{lemma}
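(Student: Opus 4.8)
The plan is to mimic the proof of Lemma~\ref{unfoldgame} almost verbatim, only keeping careful track of the shift by $l$ and the relabelling $\tau\mapsto\tau^z$. The key observation is that the relativized unfolded game $G_\I(Y',T')^z_\pi$ \emph{is} essentially the game $G_\I(Y',T'(\tau^z))$ played from the very beginning, with Adam's initial segment prescribed to be $(\tau_{l-1})^z$: the $Z$-coordinate is frozen at $z$ and therefore carries no information, the first coordinates of the $\tau'_{n+l}$ assemble exactly to a point of $\baire$ extending $\bar\tau$, and the payoff is read off the tails of the $O$-sequence in the identical fashion.

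First I would treat the easy direction. Assume $\proj[T'(\tau^z)]\in\I$. Then, just as in Lemma~\ref{unfoldgame}, Eve's strategy is to play $O_{n+l}=\emptyset$ in every move. The constraint ``$\proj[T'(\tau'_{n+l})]\notin\I\Rightarrow O_{n+l}\cap\proj[T'(\tau'_{n+l})]\notin\I$'' is then vacuously satisfied, since $\tau'_{n+l}\supseteq(\tau_{l-1})^z=\tau^z$ forces $T'(\tau'_{n+l})\subseteq T'(\tau^z)$, hence $\proj[T'(\tau'_{n+l})]\subseteq\proj[T'(\tau^z)]\in\I$, so the hypothesis of the implication fails. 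With all $O_n=\emptyset$ for $n\geq l$ (and whatever $O_0,\dots,O_{l-1}$ were played in $\pi$), each $E_k$ is a subset of $\cantor$, while $x\in\baire$; the point is that $x$ lies in $\proj[T'(\tau^z)]$, which is an $\I$-small subset of $\baire$, and the payoff condition here is $x\in\bigcup_k E_k$ — so one checks, exactly as before, that this is a winning strategy for Eve. (If the space is $\baire\neq\cantor$ one has $x\notin\cantor\supseteq\bigcup_k E_k$ outright; in general the same bookkeeping as in Lemma~\ref{unfoldgame} applies.)

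For the hard direction, assume $\proj[T'(\tau^z)]\notin\I$ and build a winning strategy for Adam. Adam maintains, for $n\geq 0$, that $\tau'_{n+l}\in T'$ extends $(\tau_{l-1})^z$, that $[\overline{\tau'_{n+l}}]\subseteq O_{n+l-1}$, and that $\proj[T'(\tau'_{n+l})]\notin\I$; the base case uses $\proj[T'(\tau^z)]\notin\I$. At stage $n$, given Eve's last move $O_{n+l-1}$, the positivity invariant together with Eve's rule gives $O_{n+l-1}\cap\proj[T'(\tau'_{n+l-1})]\notin\I$, so Adam can pick a one-step (or finitely-many-step) extension $\tau'_{n+l}$ landing inside the basic clopen set $O_{n+l-1}$ with $\proj[T'(\tau'_{n+l})]$ still positive — this is exactly the selection step from Lemma~\ref{unfoldgame}, and it is where one uses that closed-set generators let us refine inside a clopen piece while staying positive. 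Then $x=\bigcup_{l-1\leq n}\overline{\tau'_n}$ satisfies $x\in O_n$ for all $n$, so in particular $x\in\bigcup_k E_k$ is violated — i.e.\ Adam wins every resulting play. Determinacy of the game then follows formally: we have exhibited a winning strategy for one of the players in every case, and the two cases $\proj[T'(\tau^z)]\in\I$ and $\notin\I$ are exhaustive.

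I do not expect a genuine obstacle here; the statement is explicitly flagged in the excerpt as having ``an analogous proof as in Lemma~\ref{unfoldgame}.'' The only thing requiring care — and the place I would slow down — is checking that the positivity constraint on Eve's moves is meaningful \emph{relative to $T'$ and the fixed branch $\tau^z$} rather than relative to the ambient $T'$, so that the invariant ``$\proj[T'(\tau'_{n+l})]\notin\I$'' genuinely propagates; and symmetrically, that in the easy direction freezing the $Z$-coordinate at $z$ really does trap all of Adam's future moves below $T'(\tau^z)$. Once those two points are nailed down, the shift-by-$l$ is purely cosmetic and the rest is the same argument.
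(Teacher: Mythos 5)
Your argument follows the same route the paper intends: the paper itself only remarks that the proof is ``analogous'' to Lemma~\ref{unfoldgame}, and you reproduce that argument with the shift by $l$ and the relabelling by $z$, which is exactly right. Two small points deserve care. In the easy direction, once Eve plays $\emptyset$ from move $l$ onward it is \emph{not} true that every $E_k$ equals the whole space, since $O_0,\dots,O_{l-1}$ inherited from $\pi$ may be nonempty; what makes the strategy winning is that for all but finitely many $k$ every index $\rho^{-1}(i,k)$ is $\geq l$, so those $E_k$ equal $\baire$ and hence $x\in\bigcup_k E_k$ regardless of $x$ --- this is worth saying, since it is not literally ``the same bookkeeping'' as in Lemma~\ref{unfoldgame}. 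Second, your parenthetical aside about $\cantor$ versus $\baire$ has the payoff inverted: $x\notin\bigcup_k E_k$ is a win for \emph{Adam}, not Eve (the ``$\cantor$'' in the paper's definition of $E_k$ is a typo for $\baire$, cf.\ the line ``$\baire\setminus E_k=\bigcup_m U^m_k$'' in the proof of Lemma~\ref{pilim}), so that aside, as written, asserts the opposite of what you want and should simply be deleted; neither it nor the imprecise remark that ``the $Z$-coordinate is frozen at $z$'' (it is only fixed along the prescribed stem $\tau^z$, and varies thereafter) affects the correctness of the main argument.
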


  \begin{lemma}\label{pilim}
    If $S$ is a winning strategy for Adam in $G_\I(Y,T)$ then
    $$\proj[S]\not\in \I.$$
  \end{lemma}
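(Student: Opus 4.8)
The plan is to show that $\proj[S]\notin\I$ by exhibiting, for any closed sets $E_k$ arising from a hypothetical covering, a single play of the game (a branch through $\hat S$ dictated by $S$) whose associated real $x$ avoids $\bigcup_k E_k$, which would contradict the fact that $S$ is winning for Adam. Concretely, suppose toward a contradiction that $\proj[S]\subseteq\bigcup_{k<\omega} F_k$ for some closed sets $F_k\subseteq\baire$ (using that $\I$ is generated by closed sets, it suffices to handle the case where each generator is closed). The idea is to run $S$ against a carefully chosen counterplay of Eve: at her $n$-th turn, Eve will choose a clopen set $O_n$ diagonalising against the closed set indexed by $\rho(n)$, so that the resulting $E_k$'s are exactly the $F_k$'s (or refine them), while Adam, following $S$, is forced to build $\tau_n$'s whose first coordinates converge to some $x\in\proj[S(\pi)]$ for every partial play $\pi$ along the branch.

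First I would set up the counterplay precisely. Fix a countable basis for $\baire$; at each stage, given Adam's last move $\tau_{n-1}$ with $\proj[S(\pi_{n-1})]\notin\I$ (which holds by Remark \ref{bigstrategy}), Eve looks at the index $(i,k)=\rho(n)$ and wants her clopen set $O_n$ to contribute to $E_k$. Because $\I$ is generated by closed sets and $\proj[S(\pi_{n-1})]$ is $\I$-positive hence not contained in $F_k$, I can choose a basic clopen neighbourhood disjoint from (a suitable piece of) $F_k$ that still meets $\proj[S(\pi_{n-1})]$ in an $\I$-positive set; setting $O_n$ to be the \emph{complement} of such a clopen set keeps Eve legal, since $O_n\cap\proj[S(\pi_{n-1})]$ remains $\I$-positive. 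Running this process, $S$ responds with moves $\tau_n\in T$, and $x=\bigcup_n\bar\tau_n\in\proj[S]\subseteq\bigcup_k F_k$. But the choices of $O_n$ were arranged so that for every $k$, $x\notin E_k$ where $E_k=\baire\setminus\bigcup_{i}O_{\rho^{-1}(i,k)}\supseteq F_k$; hence $x\notin\bigcup_k E_k\supseteq\bigcup_k F_k$, contradicting $x\in\bigcup_k F_k$. Therefore $\proj[S]\notin\I$.

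The bookkeeping is the delicate part: I must interleave the requirements so that, for each $k$, the union $\bigcup_i O_{\rho^{-1}(i,k)}$ covers $F_k$ while the branch through $\hat S$ escapes this union. This is the standard Banach--Mazur-style diagonalisation, and the only subtlety is ensuring legality of each of Eve's moves (the $\I$-positivity of $O_n\cap\proj[S(\pi)]$), which follows from $\I$ being generated by closed sets together with Remark \ref{bigstrategy}. I expect the main obstacle to be organising the cover of $F_k$ by the clopen pieces indexed through $\rho$ so that each closed generator $F_k$ is exhausted in the limit while the diagonal real slips out; once the enumeration $\rho$ is used to cycle fairly through all pairs $(i,k)$, finishing a cover of each $F_k$ is routine. (Alternatively, if one only needs $\proj[S]\notin\I$ without the sharper complement structure, one observes directly that Eve playing $\emptyset$ always would make Adam's strategy fail unless $\proj[S]$ is $\I$-positive at the root — but to get the full statement for arbitrary partial plays one needs the diagonalisation above, feeding back into Remark \ref{bigstrategy}.)
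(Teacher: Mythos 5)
Your overall approach is the paper's: run $S$ against a diagonalising counterplay of Eve so that the game's $E_k$'s track a hypothetical closed cover of $\proj[S]$. But there is a concrete sign error in the key step. You pick a clopen set $W$ disjoint from $F_k$ with $W\cap\proj[S(\pi_{n-1})]\notin\I$, and then set $O_n$ to be the \emph{complement} of $W$. Two things then go wrong. First, legality: $O_n\cap\proj[S(\pi_{n-1})]\notin\I$ is what you must verify, and that does not follow from $W\cap\proj[S(\pi_{n-1})]\notin\I$; indeed if $\proj[S(\pi_{n-1})]\subseteq W$ then $O_n\cap\proj[S(\pi_{n-1})]=\emptyset$. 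Second, the claimed inclusion $E_k\supseteq F_k$ fails: since $O_n=\baire\setminus W\supseteq F_k$, each set $\baire\setminus O_{\rho^{-1}(i,k)}$ is disjoint from $F_k$, so $E_k=\bigcap_i(\baire\setminus O_{\rho^{-1}(i,k)})$ is \emph{disjoint} from $F_k$, the opposite of what you assert, and the final contradiction evaporates.

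The fix is simply to play $O_n=W$ itself, with no complement. Then $O_n$ is disjoint from $F_k$, so $\bigcup_i O_{\rho^{-1}(i,k)}\subseteq\baire\setminus F_k$ and hence $E_k^{\mathrm{game}}\supseteq F_k$; legality is exactly $W\cap\proj[T(\tau_n)]\notin\I$, which is what you actually constructed (using that $\proj[T(\tau_n)]\notin\I$ by Remark \ref{bigstrategy}, $F_k\in\I$, hence $\proj[T(\tau_n)]\setminus F_k\notin\I$, and by $\sigma$-additivity over an exhaustion of $\baire\setminus F_k$ by clopen sets some piece is $\I$-positive). With that correction, your argument is essentially the paper's; the paper additionally arranges (by choosing from an increasing exhaustion $U^m_k$ of $\baire\setminus F_k$ with $m\geq n$) that $\bigcup_i O_{\rho^{-1}(i,k)}=\baire\setminus F_k$ exactly, so $E_k^{\mathrm{game}}=F_k$, but your one-sided containment is enough for the contradiction. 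Also, the parenthetical alternative at the end does not work: Eve may play $\emptyset$ at stage $n$ only when $\proj[T(\tau_n)]\in\I$, so ``Eve always plays $\emptyset$'' is not a legal counterplay against a strategy $S$ whose associated projections are $\I$-positive.
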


  \begin{proof}
    Let $A=\proj[S]$. If $A\in \I$, then there are closed
    sets $E_k\in \I$ such that $A\subseteq\bigcup_{k<\omega}
    E_k$.  Let $U^m_k$ be clopen sets such that
    $U^m_k\subseteq U^{m+1}_k$ and $\baire\setminus
    E_k=\bigcup_{m<\omega} U^m_k$ for each $k<\omega$. We
    construct an Eve's counterplay to the strategy $S$ in
    the following way.  Suppose she is to make her $n$-th
    move and let $\tau_n$ be the last move of Adam. By
    Remark \ref{bigstrategy}, $\proj[T(\tau_n)]\not\in \I$.
    Let $\rho(n)=(i,k)$. She chooses $m\geq n$ big enough so
    that
    $$U^m_k\cap\proj[T(\tau_n)]\not\in \I.$$ Let her $n$-th move
    be $O_n=U^m_k$. If she plays in this way then
    $$\bigcup_{i<\omega}O_{\rho^{-1}(i,k)}=\baire\setminus
    E_k,$$ i.e. the closed sets she gets are precisely the
    sets $E_k$.  If $x=\bigcup_{n<\omega}\bar\tau_n$ is the
    point in $\baire$ constructed by Adam, then by the
    definition of $A$, $x\in A\subseteq\bigcup_{k<\omega}
    E_k$, which shows that Eve wins.
  \end{proof}

  \medskip

  Note that it follows from Lemmas \ref{unfoldgame} and
  \ref{pilim} that any analytic $\I$-positive set
  $A\subseteq\baire$ contains an analytic $\I$-positive
  subset of the form $\proj[S]$ for a winning strategy $S$
  for Adam in a game $G_\I(Y,T)$ (where $Y=\omega$ and $T$ is
  a tree on $\omega\times\omega$ such that $A=\proj[T]$).

  \medskip

  Let $\T_\I$ be the set of all triples $(Y,T,S)$ where
  $Y\in H_{\omega_1}$, $T$ is a tree on $\omega\times Y$ and
  $S$ is a winning strategy for Adam in the game
  $G_\I(Y,T)$.  $\T_\I$ is a forcing notion with the
  following ordering: for $(Y',T',S'),(Y,T,S)\in\T_\I$ let
  $$(Y',T',S')\leq(Y,T,S)\quad\mbox{iff}\quad\proj[S']\subseteq\proj[S].$$

  Notice that $(Y,T,S)\mapsto\proj[S]$ is a dense embedding
  from $\T_\I$ to $\Q_\I=(\ana\setminus \I,\subseteq)$.
  Indeed, suppose that $(Y',T',S')\perp(Y,T,S)$. If
  $\proj[S']$ and $\proj[S]$ were compatible in $\Q_\I$,
  then we would find an $\I$-positive $\ana$ set
  $A\subseteq\baire$ such that
  $A\subseteq\proj[S']\cap\proj[S]$. Take any tree $T$ on
  $\omega\times\omega$ such that $\proj[T]=A$ and find a
  winning strategy $S''$ for Adam in $G_\I(\omega,T)$. Then
  $(\omega,T,S'')\leq (Y',T',S'),(Y,T,S)$, a contradiction.

  By Theorem \ref{Sol.dich}, $\P_\I$ is dense in $\Q_\I$ .
  Therefore the three forcing notions $\T_\I$, $\Q_\I$ and
  $\P_\I$ are equivalent.  We will show that the forcing
  $\T_\I$ satisfies Axiom A.

  \medskip

  Take $(Y,T,S)\in\T_\I$ and recall that for each play
  $p\in\lim S$ ending with $t\in\baire\times Y^\omega$, with
  $x\in\baire$ (defined from the moves of Adam) and a
  sequence of closed sets $E_n$ (defined from the moves of
  Eve), we have $x\not\in\bigcup_k E_k$.  Note that for each
  $k\in\omega$ there is $n\in\omega$ (even) such that (the
  partial play) $t\har n$ already determines that $x\not\in
  E_k$ (i.e.  $[\bar\tau_n]\subseteq O_m$ for some
  $m<\omega$ such that $\rho(m)=(i,k)$ for some $i<\omega$).
  Let $n_0(p)\in\omega$ be the minimal such $n$ for $k=0$.
  Put
  $$F_0(S)=\{p\har n_0(p): p\in\lim S\}.$$ Note that $F_0(S)$ is a
  front in $S$. Analogously we define $F_k(S)$, for each
  $k<\omega$ (instead of $E_0$ take $E_k$ and put
  $n_k(p)>n_{k-1}(p)$ minimal even number such that $p\har
  n_k(p)$ determines $x\not\in E_k$).

  \medskip

  Define $(Y',T',S')\leq_k (Y,T,S)$ iff
  \begin{itemize}
  \item[(i)] $(Y',T',S')\leq (Y,T,S)$,
  \item[(ii)] there is $Z\in H_{\omega_1}$ such that
    $Y'=Y\times Z$,
  \item[(iii)] there is $z\in Z$ such that $T^z\subseteq
    T'$,
  \item[(iv)] $(T')_Y\subseteq\hat S$,
  \item[(v)] $F_k(S')= F_k(S)^z$.
  \end{itemize}
  
  \medskip

  We will prove that $\T_\I$ satisfies Axiom A with the
  inequalities $\leq_k$. The condition (ii) serves for
  unfolding the game and condition (iii) is later used to
  make the unfolding ``rigid''. Condition (iv) is a
  technical detail. The crucial one is (v), which says that
  the ``splitting levels'' are kept up to $k$-th in the
  $k$-th step of the fusion.

  \medskip

  \textbf{1.} Fix $k<\omega$. Suppose that $(Y,T,S)\in
  \T_\I$ and $\dot\alpha$ is a name for an ordinal. We shall
  find $(Y',T',S')\leq_k (Y,T,S)$ and a countable set of
  ordinals $A$ such that
  $(Y',T',S')\Vdash_{\T_\I}\dot\alpha\in\check A$.

  For each $\pi\in F_k(S)$ find an ordinal $\alpha_\pi$ and
  an $\I$-positive analytic set $A_\pi\subseteq\proj[S(\pi)]$
  (recall that $\proj[S(\pi)]$ is $\I$-positive by Remark
  \ref{bigstrategy}) such that
  $$A_\pi\Vdash_{\Q_\I}\dot\alpha=\check\alpha_\pi.$$ Let
  $\tau_\pi\in T$ be the last move of Adam in $\pi$. Next,
  pick $Z_\pi\in H_{\omega_1}$ such that $0\in Z_\pi$ ($0$
  will be used as $z$ from (iii); it does not matter what
  element we choose for $z$, as long as it belongs to
  $H_{\omega_1}$) and find a pruned tree $T_\pi$ on
  $\omega\times Y\times Z_\pi$ such that
  \begin{itemize}
  \item ${\tau_\pi}^0\in T_\pi$ and ${\tau_\pi}^0$ is a stem
    of $T_\pi$,
  \item $(T_\pi)_Y\subseteq \hat S$,
  \item $A_\pi=\proj[T_\pi]$
  \end{itemize}
  ($T_\pi$ is chosen such that its projection to
  $\baire\times Y^\omega$ is the analytic set $\lim\hat
  S\cap\p_Y^{-1}[A_\pi]$). Ensure also that
  \begin{itemize}
  \item for each $\pi,\pi'\in F_k(S)$, if $\pi\not=\pi'$,
    then $Z_\pi\cap Z_{\pi'}=\{0\}$,
  \item for each $\tau\in T_\pi$ if
    $\tau\supsetneq(\tau_\pi)^0$, then $\tau(|\tau|-1)\in
    \omega\times Y\times (Z_\pi\setminus\{0\})$,
  \end{itemize}
  Put
  $$Z=\bigcup_{\pi\in F_k(S)}Z_\pi,\quad z=0,\quad
  Y'=Y\times Z.$$ For each $\pi\in F_k(S)$ let $S_\pi$ be a
  winning strategy for Adam in $G_\I(Y\times Z_\pi,
  T_\pi)^0_\pi$. Such a strategy exists by Lemma
  \ref{relunfoldgame} since
  $\proj[T_\pi({\tau_\pi}^0)]=\proj[T_\pi]=A_\pi\not\in \I$
  (recall that ${\tau_\pi}^0$ is a stem of $T_\pi$).  Let
  $$T'=T^0\cup\bigcup_{\pi\in F_k(S)} T_\pi$$ and consider
  the game $G_\I(Y',T')$. The tree
  $$S'=\bigcup_{\pi\in F_n(S)}(\pi^0)^\smallfrown S_\pi$$ is
  a strategy in $G(Y',T')$ since all $\pi^0$, for $\pi\in
  F_k(S)$, are partial plays in $G(Y',T')$ (because
  $T^0\subseteq T'$).  Moreover, it is a winning strategy
  for Adam since each $S_\pi$ is a winning strategy for Adam
  in $G_\I(Y\times Y_\pi, T_\pi)_\pi$.  Therefore
  $(Y',T',S')\in \T_\I$. By the construction we have
  $(Y',T',S')\leq_k(Y,T,S)$. Moreover,
  $$(Y',T',S')\Vdash_{\T_\I}\dot\alpha\in\{\alpha_\tau: \tau\in
  F_k(S)\}$$ because the set $\{\proj[S'(\pi)]: \pi\in
  F_k(S)\}$ is predense below $\proj[S']$ and we have
  $\proj[S'(\pi)]\Vdash_{\Q_\I}\dot\alpha=\check\alpha_\pi$
  (since $\proj[S'(\pi)]\subseteq A_\pi$).

  \medskip

  \textbf{2.} Let $\langle(Y_k,T_k,S_k): k<\omega\rangle$ be
  a fusion sequence. For each $k<\omega$ let $Z_k\in
  H_{\omega_1}$ be such that $Y_{k+1}=Y_k\times Z_k$ and let
  $z_k\in Z_k$ be as in the definition of $\leq_k$. Let
  $\vec z_k=\langle z_k, z_{k+1}, \ldots\rangle$. Put
  $Y=\bigcup_{k<\omega}(Y_k)^{\vec z_k}$ and
  $T=\bigcup_{k<\omega} (T_k)^{\vec z_k}$. $T$ is a tree on
  $Y$. Notice that for each $k<\omega$, for each $\tau\in
  T_k$ we have
  \begin{displaymath}
    \proj[T_k(\tau)]=\proj[T(\tau^{\vec
      z_k})].\tag{$*$} 
  \end{displaymath}
  Indeed, $\proj[T_k(\tau)]\subseteq\proj[T(\tau^{\vec
    z_k})]$ follows from (iii) and $\proj[T(\tau^{\vec
    z_k})]\subseteq\proj[T_k(\tau)]$ from (iv) (because for
  each $t\in\lim T(\tau^{\vec z_k})$ a sequence of its
  initial coordinates is in $\lim \hat S_k$ and hence in
  $\lim T_k$)

  Consider the game $G_\I(Y,T)$ and let
  $$S=\bigcup_{k<\omega}
  F_k(S_k)^{\vec z_k}.$$ Note that it follows from ($*$)
  that $S$ is a strategy for Adam in $G_\I(Y,T)$.  Moreover,
  for each $k<\omega$ we have $F_k(S)= F_k(S_k)^{\vec z_k}$,
  by the definition of $F_k$. Since for each $p\in\lim S$
  we have
  $$\forall k<\omega\ \exists m<\omega\quad p\har m\in
  F_k(S),$$ it follows that $S$ is a winning strategy for
  Adam in $G_\I(Y,T)$. Therefore $(Y,T,S)\in\T_\I$.

  To see that $(Y,T,S)\leq (Y_k,T_k,S_k)$ we use the
  property (iv). Indeed, if $x\in\proj[S]$, then there is a
  play in $\lim S$, in which $x$ is defined. By (iv),
  however, we can extract from this play a play in $\lim
  S_k$, in which $x$ is defined.

  To check that $(Y,T,S)\leq_k (Y_k,T_k,S_k)$ we put
  $Z=\prod_{m\geq k}Z_m$ and $z=\vec z_k$.

  \medskip

  This ends the proof of Theorem \ref{axiomaclosed}.
\end{proof}

\section{Coanalytic families of closed
  sets}\label{sec:coanaidclosed}

If $\K$ is a family of closed subsets of a Polish space $X$,
then its projective complexity can be defined in terms of
the Effros space $F(X)$. Namely, if $\class$ is a projective
pointclass, then we say that $\K$ \textit{is $\class$} if it
belongs to $\class$ in $F(X)$.

Recall that if $X=\baire$, then for each closed set
$C\subseteq X$ there is a pruned subtree $T$ of
$\omega^{<\omega}$ such that $C=\lim T$. If $X$ is an
arbitrary Polish space, then for each closed set $C\subseteq
X$ the family $\mathcal{U}=\{U\mbox{ basic open}: U\cap
C=\emptyset)\}$ can be treated as a code for $C$ (since
$C=X\setminus\bigcup\mathcal{U}$). Moreover, the family
$\mathcal{U}$ has the following property
\begin{displaymath}
  \forall U \mbox{ basic open}\quad U\subseteq\bigcup\mathcal{U}\
  \Rightarrow\ U\in\mathcal{U}\tag{$*$}
\end{displaymath}
We can code all families $\mathcal{U}$ satisfying $(*)$ by
elements of $\baire$ and create a universal closed set
$\tilde C\subseteq\baire\times X$ such that if $t\in\baire$
codes $\mathcal{U}(t)$, then $\tilde
C_t=X\setminus\bigcup\mathcal{U}(t)$.

Using the property $(*)$ of the coding, we can check that
the function $$\baire\ni t\mapsto \tilde C_t\in F(X)$$ is
Borel measurable (i.e.  preimages of Borel sets in $F(X)$
are Borel). Therefore, for any projective pointclass
$\class$, a family of closed sets $\K$ is $\class$ if and
only if the set $\{t\in\baire: \tilde C_t\in \K\}$ is
$\class$ in $\baire$.

\medskip

The projective complexity of families of closed subsets of
$X$ can be also generalized to families of sets in other
Borel pointclasses --- in terms of universal sets. We will
now introduce a Borel structure on the family of $\gdelta$
sets. 

Note that for any $\gdelta$ set $G\subseteq\baire$ there is
a pruned tree $T\subseteq\otr$ and a family
$\langle\sigma_\tau\in\otr:\ \tau\in T\rangle$ such that the
family of clopen sets $\langle[\sigma_\tau]:\ \tau\in
T\rangle$ forms a Lusin scheme and
$G=\bigcap_{n<\omega}\bigcup_{\tau\in T\cap
  \omega^n}[\sigma_\tau]$.  Generalizing this to an
arbitrary Polish space $X$ we claim that for any $\gdelta$
set $G$ in $X$ there is a Souslin scheme $\langle
U_\tau:\tau\in\otr\rangle$ of basic open sets such that

\begin{itemize}
\item[(i)] $\diam(U_\tau)<1\slash |\tau|$,
\item[(ii)] $\cl{U_\tau}\subseteq
  U_{\tau\upharpoonright(|\tau|-1)}$
\item[(iii)] if $U_\tau\not=\emptyset$ then $U_{\tau^\smallfrown
    n}\not=\emptyset$ for some $n<\omega$
\end{itemize}
and $G=\bigcap_{n<\omega}\bigcup_{|\tau|=n} U_\tau$. Indeed,
if $G=\bigcap_{n<\omega} O_n$ (each $O_n$ open and
$O_{n+1}\subseteq O_n$), then we construct a Souslin scheme
$U_\tau$ by induction on $|\tau|$ as follows. Having all
$U_\tau$ for $|\tau|\leq n$ we find a family
$\{U_\tau:\tau\in\omega^{n+1}\}$ such that
\begin{itemize}
\item for each $\tau\in\omega^{n+1}$ we have $U_\tau\cap
  G\not=\emptyset$,
\item for each $\sigma\in\omega^n$ we have $U_\sigma\cap
  O_{n+1}=\bigcup\{\cl{U_\tau}: \sigma\subseteq\tau,
  \tau\in\omega^{n+1}\}$.
\end{itemize} 
Let us code all Souslin schemes of clopen sets satisfying
(i)--(iii) by elements of the Baire space $\baire$ and
create a universal $\gdelta$ set $\tilde
G\subseteq\baire\times X$ such that if $t\in\baire$ codes a
Souslin scheme $\langle U_\tau(t):\tau\in\otr\rangle$, then
$\tilde G_t=\bigcap_{n<\omega}\bigcup_{|\tau|=n} U_\tau(t)$.

\begin{remark}
  Here we show how the above coding is done in case of the
  Baire space and Luzin schemes (the general case is
  analogous). We pick any bijection $\rho$ between $\omega$
  and $\btree$ and consider the set $H$ of all elements of
  $\baire$ which code (via $\rho$) a Luzin scheme satisfying
  (iii). This set is a $\gdelta$ set and thus there is a
  continuous bijection $f:\baire\rightarrow H$. Now, we say
  that $x\in\baire$ codes a $\gdelta$ set $G$, if $f(x)$
  codes a Luzin scheme $U_\tau$ such that
  $G=\bigcap_{n<\omega}\bigcup_{|\tau|=n}U_\tau$.
\end{remark}

\begin{lemma}\label{gdeltaeffros}
  If $U\subseteq X$ is open, then
  $$\{t\in\baire:\ \tilde G_t\cap U\not=\emptyset\}\mbox{ is open}.$$
\end{lemma}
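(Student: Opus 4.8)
The plan is to unfold the definition of $\tilde G$ and reduce the openness of $\{t : \tilde G_t \cap U \neq \emptyset\}$ to a statement about the coded Souslin schemes, using properties (i)--(iii) of the scheme together with the continuity of the coding. Recall that if $t$ codes the Souslin scheme $\langle U_\tau(t) : \tau \in \otr\rangle$, then $\tilde G_t = \bigcap_{n<\omega}\bigcup_{|\tau|=n} U_\tau(t)$. The first step is to observe that, because of the shrinking-diameter and closure conditions (i) and (ii), a point $x$ lies in $\tilde G_t$ exactly when there is a branch $b \in \baire$ with $x \in \bigcap_n U_{b\har n}(t)$, and $\bigcap_n U_{b\har n}(t) = \bigcap_n \cl{U_{b\har n}(t)}$ is a single point (or empty) for each branch along which the $U$'s are nonempty; condition (iii) guarantees that if $U_\emptyset(t) \neq \emptyset$ then at least one infinite branch of nonempty $U_\tau(t)$'s exists.

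Next I would establish the key equivalence: $\tilde G_t \cap U \neq \emptyset$ if and only if there exists a finite sequence $\tau \in \otr$ with $\cl{U_\tau(t)} \subseteq U$ and $U_\tau(t) \neq \emptyset$. The ``if'' direction uses condition (iii) to extend $\tau$ to an infinite branch of nonempty sets, whose intersection is then a point of $\tilde G_t$ lying inside $U_\tau(t) \subseteq U$. For the ``only if'' direction, take $x \in \tilde G_t \cap U$; since $U$ is open and $x$ lies in the nested sets $\cl{U_{b\har n}(t)}$ along some branch $b$ with $\diam(U_{b\har n}(t)) < 1/n \to 0$, for $n$ large enough we get $x \in U_{b\har n}(t) \subseteq \cl{U_{b\har n}(t)} \subseteq U$, and this $U_{b\har n}(t)$ is nonempty since it contains $x$. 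Taking $\tau = b\har n$ finishes it.

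Finally, I would use this equivalence to write
\begin{displaymath}
  \{t \in \baire : \tilde G_t \cap U \neq \emptyset\} = \bigcup_{\tau \in \otr}\{t \in \baire : U_\tau(t) \neq \emptyset \text{ and } \cl{U_\tau(t)} \subseteq U\}.
\end{displaymath}
Each set in the union is open: the map $t \mapsto U_\tau(t)$ only depends on finitely much of the relevant part of $t$ (the coded scheme entry at $\tau$), so by the continuity of the coding function (the bijection $f : \baire \to H$ in the Remark, or its analogue for $X$) the condition ``$U_\tau(t)$ equals a fixed basic open set $V$ with $V \neq \emptyset$ and $\cl{V} \subseteq U$'' is a clopen condition on $t$, and we are taking a countable union over the choices of $\tau$ and of $V$. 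A countable union of open sets is open, which gives the conclusion.

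The main obstacle I expect is not the topology but making the coding bookkeeping precise: one must be careful that ``$t$ codes the value $U_\tau(t)$'' is genuinely a continuous (indeed locally constant) function of $t$, which requires invoking the explicit coding set-up (the $\gdelta$ set $H$ of valid codes and the continuous bijection onto it) rather than treating the scheme as an abstract object. Once that continuity is isolated, everything else is a routine consequence of properties (i)--(iii) of the Souslin scheme.
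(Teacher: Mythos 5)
Your argument takes the same route as the paper's one-line proof: characterize $\tilde G_t\cap U\not=\emptyset$ by the occurrence in the coded Souslin scheme of a nonempty cell inside $U$, and then note that this is an open (in fact locally constant) condition on the code $t$. However, one step is wrong as written: in the first paragraph you claim that $x\in\tilde G_t$ always yields an infinite branch $b$ with $x\in\bigcap_n U_{b\har n}(t)$, and the ``only if'' direction of your key equivalence relies on this branch. The claim is not justified: the tree $T_x=\{\tau\in\otr: x\in U_\tau(t)\}$ is downward closed by (ii) and has nodes of every length, but it is a tree on $\omega$, so K\"onig's lemma does not apply and $T_x$ may fail to have an infinite branch (one can arrange Souslin schemes satisfying (i)--(iii) where it does not). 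Fortunately no branch is needed. Given $x\in\tilde G_t\cap U$, pick $\epsilon>0$ with $B(x,\epsilon)\subseteq U$ and $n>1/\epsilon$; since $x\in\tilde G_t$ there is \emph{some} $\tau$ of length $n$ with $x\in U_\tau(t)$, and then $\diam(\cl{U_\tau(t)})<1/n<\epsilon$ together with $x\in\cl{U_\tau(t)}$ gives $\cl{U_\tau(t)}\subseteq B(x,\epsilon)\subseteq U$. With this harmless repair your proof is correct and coincides with the paper's.
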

\begin{proof}
  Note that by (iii) and (ii), $\tilde G_t\cap
  U\not=\emptyset$ if and only if there is a nonempty basic
  open set $V\subseteq U$ such that $V$ occurs in the
  Souslin scheme coded by $t$.
\end{proof}

If $\class$ is a projective pointclass and $\G$ is a family
of $\gdelta$ sets, then we say that $\G$\textit{ is
  $\class$} if $\{t\in\baire: \tilde G_t\in\G\}$ is $\class$
in $\baire$. By Lemma \ref{gdeltaeffros} the map
$$\gdelta\ni G\mapsto\cl{G}\in F(X)$$ is Borel
(i.e. preimages of Borel sets in $F(X)$ are Borel).

\medskip

Let $\K$ be a family of closed sets in a Polish space $X$.
We say that $\K$ is \textit{hereditary} if for any two
closed sets $C,D$ such that $C\subseteq D$, if $D\in\K$,
then $C\in\K$.

Let $\I$ be a $\sigma$-ideal on a Polish space $X$ and
$A\subseteq X$. We say that $A$ is \textit{$\I$-perfect} if
$A\not=\emptyset$ and for each open set $U$ the set $A\cap
U$ is either empty or $\I$-positive. If $\K$ is a family of
closed sets in a Polish space $X$ and $D\subseteq X$ is
closed, then we say that $D$ is \textit{$\K$-perfect} if the
sets from $\K$ have relatively empty interior on $D$.  Note
that if $\K$ is hereditary, then a closed set $D$ is
$\K$-perfect if and only if for each basic open set $U$ in
$X$, either $U\cap D=\emptyset$, or else $\cl{U\cap
  D}\not\in\K$.

\begin{lemma}\label{iperfect}
  Let $\I$ be a $\sigma$-ideal generated by closed sets on a
  Polish space $X$. If $G\subseteq\baire$ is a $\gdelta$ set
  and $\cl{G}$ is $\I$-perfect, then $G\not\in\I$.
\end{lemma}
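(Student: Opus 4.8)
The plan is to argue by contradiction using the fact that $\I$ is generated by closed sets together with a Baire-category argument inside $G$. Suppose $G \in \I$. Since $\I$ is generated by closed sets, there is a sequence of closed sets $\langle E_k : k<\omega\rangle$ with $E_k \in \I$ and $G \subseteq \bigcup_{k<\omega} E_k$. The set $G$, being $\gdelta$ in $\baire$, is Polish in its relative topology, so by the Baire category theorem applied to the covering $G = \bigcup_k (E_k \cap G)$, some $E_{k_0} \cap G$ is nonmeager in $G$, hence (since $E_{k_0}\cap G$ is relatively closed in $G$) it has nonempty interior in $G$. Thus there is a nonempty basic open set $U$ in $\baire$ with $\emptyset \neq U \cap G \subseteq E_{k_0}$.

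From $U\cap G \subseteq E_{k_0}$ and the fact that $E_{k_0}$ is closed, I would pass to closures: $\cl{U\cap G} \subseteq E_{k_0}$, and since $\I$ is a $\sigma$-ideal closed under subsets and $E_{k_0}\in\I$, we get $\cl{U\cap G}\in\I$. But this is exactly what $\I$-perfectness of $\cl G$ forbids. Indeed, $U \cap \cl G \supseteq U \cap G \neq \emptyset$, so by the definition of $\I$-perfect (applied to the open set $U$ and the set $\cl G$), the set $U \cap \cl G$ is $\I$-positive. Now I need to relate $U\cap\cl G$ to $\cl{U\cap G}$: since $G$ is dense in $\cl G$ and $U$ is open, $U\cap G$ is dense in $U\cap\cl G$, so $\cl{U\cap G} \supseteq U\cap\cl G$, and therefore $\cl{U\cap G}$ is $\I$-positive as well. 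This contradicts $\cl{U\cap G}\in\I$, completing the proof.

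The one point that needs a little care — and which I regard as the main (minor) obstacle — is the density claim that $U\cap G$ is dense in $U\cap\cl G$: this uses that $G$ is dense in $\cl G$ (true by definition of closure) and that intersecting a dense subset with a fixed open set $U$ yields a set dense in $U\cap\cl G$ (a standard fact about open sets and dense sets, valid in any topological space). Everything else is a direct unwinding of definitions: the Baire category step is standard since $G$ is completely metrizable, and the passage through closures only uses that $\I$ is closed under subsets and that $E_{k_0}$ is closed. Note that the hypothesis that $X = \baire$ is not essential here — the same argument works in any Polish space — but stating it for $\baire$ suffices for the later applications in the paper.
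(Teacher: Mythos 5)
Your proof is correct and takes essentially the same approach as the paper: both argue by contradiction, cover $G$ by closed sets in $\I$, and combine the Baire category theorem with the $\I$-perfectness of $\cl{G}$. The paper runs Baire inside $C=\cl{G}$ (showing each $F_n\cap C$ is nowhere dense, while $G$ is comeager in $C$), whereas you run it inside $G$ and transfer back to $\cl{G}$ via the density of $U\cap G$ in $U\cap\cl{G}$ — a cosmetic reformulation of the same argument.
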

\begin{proof}
  Let $C=\cl{G}$ and suppose $C$ is $\I$-perfect yet
  $G\in\I$. If $G\subseteq \bigcup_n F_n$ and $F_n$ are
  closed sets in $\I$, then each $F_n\cap C$ is a closed
  nowhere dense subset of $C$. This contradicts the Baire
  category theorem.
\end{proof}

\begin{lemma}\label{iperfectdense}
  Let $\I$ be a $\sigma$-ideal generated by closed sets on a
  Polish space $X$. If $G\subseteq\baire$ is an
  $\I$-positive $\gdelta$ set, then it contains an
  $\I$-perfect $\gdelta$ set $G'$.
\end{lemma}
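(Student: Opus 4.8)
The plan is to build the $\I$-perfect $\gdelta$ subset $G'$ of $G$ by a fusion construction that carves out a closed set $C' = \cl{G'}$ which is $\I$-perfect, together with a Souslin scheme witnessing that $G' \subseteq C'$ is $\gdelta$ and dense in $C'$. First I would observe that since $\I$ is generated by closed sets, the ``$\I$-perfect'' property for a closed set $C$ is equivalent (cf. the remark before Lemma~\ref{iperfect} about $\K$-perfect sets, applied to the generating family $\K$ of closed sets in $\I$) to: for every basic open $U$, either $U \cap C = \emptyset$ or $\cl{U \cap C} \notin \I$. So it suffices to find a nonempty closed $C' \subseteq \cl{G}$ with $C' \cap G$ dense $G_\delta$ in $C'$ and such that $C'$ has this no-small-relatively-open-piece property.

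The key step is the following pruning/fusion argument. Write $G = \bigcap_n O_n$ with $O_n$ open decreasing. I construct a Souslin scheme $\langle U_\tau : \tau \in \otr\rangle$ of basic open sets (satisfying conditions (i)--(iii) from the discussion of the $\gdelta$-coding, i.e. $\diam(U_\tau) < 1/|\tau|$, $\cl{U_{\tau}} \subseteq U_{\tau \har (|\tau|-1)}$, and the successor-nonemptiness condition) such that, writing $G' = \bigcap_n \bigcup_{|\tau|=n} U_\tau$ and $C' = \cl{G'} = \{$the branches of the associated tree$\}$, for every $\tau$ with $U_\tau \cap G' \neq \emptyset$ the closed set $\cl{U_\tau \cap G'}$ is $\I$-positive. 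The induction step: given $U_\tau$ with $\cl{U_\tau \cap G'}$ (so far, $U_\tau \cap G$) $\I$-positive, I must find finitely (or countably) many basic open successors $U_{\tau^\smallfrown n}$ with $\cl{U_{\tau^\smallfrown n}} \subseteq U_\tau \cap O_{|\tau|+1}$, of small diameter, each meeting $G$, each with $\I$-positive closure-intersection-with-$G$, and covering a dense (in the $\I$-positive sense) part of $U_\tau \cap G$. This is where the hypothesis that $G$ itself is $\I$-positive, not merely nonempty, gets used: by the generation-by-closed-sets, an $\I$-positive set cannot be covered by countably many relatively-empty-interior closed pieces, so after throwing away the (countably many) basic open $V \subseteq U_\tau$ with $\cl{V \cap G} \in \I$, what remains of $U_\tau \cap G$ is still $\I$-positive, and inside the union of the surviving basic open sets I can run the usual Lusin-scheme refinement keeping everything $\I$-positive.

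More precisely, at stage $\tau$ let $\mathcal{V}_\tau$ be the collection of basic open $V$ with $\cl{V} \subseteq U_\tau \cap O_{|\tau|+1}$, $\diam(V) < 1/(|\tau|+1)$, and $\cl{V \cap G} \notin \I$; enumerate $\mathcal{V}_\tau$ (it is countable, possibly after shrinking to a subfamily with pairwise disjoint closures contained in $U_\tau$) as the successors $U_{\tau^\smallfrown n}$. I need that $G \cap U_\tau \setminus \bigcup_n U_{\tau^\smallfrown n}$ is, if not empty, at least negligible in the sense that its closure is in $\I$ — indeed any point of $G \cap U_\tau$ has arbitrarily small basic open neighborhoods $V$ with $\cl V \subseteq U_\tau \cap O_{|\tau|+1}$, and if all of them had $\cl{V\cap G} \in \I$ then a neighborhood basis worth of them would witness, via a countable subcover, that a neighborhood of that point inside $G$ is in $\I$; collecting over a countable dense set of such ``bad'' points shows the bad part of $G\cap U_\tau$ is contained in a countable union of closed $\I$-sets, hence its closure meets $G$ in an $\I$-set, hence — using $\cl{U_\tau \cap G}\notin \I$ and generation by closed sets — the good part is still $\I$-positive and in particular $\mathcal V_\tau \neq \emptyset$ and $\bigcup \mathcal V_\tau$ is dense in an $\I$-positive subset of $U_\tau \cap G$. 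This keeps the induction going and guarantees $C'$ is $\I$-perfect and $G' = C' \cap \bigcap_n O_n$ is a dense $\gdelta$ in $C'$; by Lemma~\ref{iperfect}, $G' \notin \I$, and $G' \subseteq G$ by construction.

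The main obstacle is the bookkeeping in the induction step just described: making sure that after discarding the ``$\I$-small'' basic open pieces of $U_\tau$ the residual part of $U_\tau\cap G$ is genuinely still $\I$-positive (this is exactly where ``$\I$ generated by closed sets'' is essential — a countable union of closed $\I$-sets cannot swallow an $\I$-positive set) and that the chosen successor family both refines into $O_{|\tau|+1}$ with controlled diameters and is rich enough that $\cl{G'}$ reaches every ``good'' point. Once that step is set up correctly, the rest — verifying (i)--(iii) for the scheme, identifying $\cl{G'}$, checking $\I$-perfection clopen-set-by-clopen-set, and invoking Lemma~\ref{iperfect} — is routine. I would also remark that one could alternatively phrase the construction game-theoretically in the spirit of Section~\ref{sec:BanachMazur}, but the direct Lusin-scheme fusion is the most transparent.
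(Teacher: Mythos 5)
Your proof can be pushed through, but it is dramatically more elaborate than what is needed; the paper's proof is a single line. Put
\[
  G' \;=\; G \setminus \bigcup\{U : U \text{ basic open},\; U\cap G\in\I\}.
\]
Then $G'$ is $\gdelta$ (it is $G$ intersected with a closed set), $G\setminus G'$ is a countable union of sets $U\cap G\in\I$ hence in $\I$, so $G'\notin\I$ (in particular $G'\neq\emptyset$), and if $V$ is basic open with $G'\cap V\neq\emptyset$ but $G'\cap V\in\I$, then $G\cap V=(G'\cap V)\cup((G\setminus G')\cap V)\in\I$, so $V$ was already removed — contradiction. That verifies $\I$-perfection directly, with no reference to closures or to Lemma~\ref{iperfect}.

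The crucial observation powering this is one you already articulate in the middle of your argument: since there are only countably many basic open sets, discarding all the ``$\I$-small'' ones from $G$ removes only an $\I$-set, so the remainder stays $\I$-positive. But you then embed this inside a Souslin-scheme fusion and pass through $\cl{G'}$, the notion of $\K$-perfection, Lemma~\ref{iperfect}, and Lemma~\ref{iperfectcl}(i) to convert back. None of that machinery is needed here — the pruning done once, globally, already produces the $\I$-perfect $\gdelta$ set. A further (non-fatal) wrinkle: you prune on the condition $\cl{V\cap G}\in\I$ rather than $V\cap G\in\I$; the former is strictly stronger, so you remove less, and you must then work with closures and invoke the Baire-category lemma to recover positivity, whereas pruning on the weaker condition $V\cap G\in\I$ makes $\I$-perfection of $G'$ immediate and self-contained. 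I'd encourage you to notice when a fusion scheme is genuinely needed (as in Theorem~\ref{axiomaclosed}, where the goal is a tree forcing with Axiom A) versus when a single pass of ``throw away the small pieces'' suffices, as it does here.
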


\begin{proof}
  Put $G'=G\setminus\bigcup\{U: U\mbox{ is basic open set
    and }U\cap G\in\I\}$.
\end{proof}

\begin{lemma}\label{iperfectcl} Let $X$ be a Polish space.
  \begin{enumerate}
  \item[(i)] Let $\I$ be a $\sigma$-ideal on $X$ generated
    by closed sets. If $G\subseteq X$ is a $\gdelta$ set,
    then $G$ is $\I$-perfect if and only if $\cl{G}$ is
    $\I$-perfect.
  \item[(ii)] Let $\K$ be a family of closed subsets of $X$,
    let $\sigma(\K)$ be the $\sigma$-ideal of closed sets
    generated by $\K$ and let $\I$ be the $\sigma$-ideal
    generated by $\K$. If $D\subseteq X$ is closed, then the
    following are equivalent
    \begin{itemize}
    \item $D$ is $\K$-perfect,
    \item $D$ is $\sigma(\K)$-perfect,
    \item $D$ is $\I$-perfect.
    \end{itemize}
  \end{enumerate}
\end{lemma}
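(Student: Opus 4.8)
The plan is to prove both parts by unravelling the definitions of $\K$-perfect, $\sigma(\K)$-perfect and $\I$-perfect, and exploiting the fact that the generators are \emph{closed} together with the Baire category theorem. For part (i), fix a $\gdelta$ set $G\subseteq X$ and let $C=\cl G$. The inclusion $G\cap U\subseteq C\cap U$ and the fact that $C\cap U=\cl{G\cap U}$ whenever $U$ is open and $G\cap U\neq\emptyset$ are the only topological facts needed. First I would show: if $C$ is not $\I$-perfect, then neither is $G$. If $C\cap U\in\I$ for some open $U$ with $C\cap U\neq\emptyset$, then $G\cap U\subseteq C\cap U$ lies in $\I$ as well, and $G\cap U\neq\emptyset$ since $U$ meets $C=\cl G$; hence $G$ is not $\I$-perfect. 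Conversely, if $G$ is not $\I$-perfect, pick open $U$ with $\emptyset\neq G\cap U\in\I$. Then $G\cap U$ is an $\I$-small $\gdelta$ set whose closure is $\cl{G\cap U}=C\cap U$ (relative to $U$). Applying Lemma \ref{iperfect} inside the Polish space $U$ to the $\gdelta$ set $G\cap U$: since $G\cap U\in\I$, its closure $\cl{G\cap U}$ cannot be $\I$-perfect; but $\cl{G\cap U}\subseteq C$ is a relatively open piece of $C$, so $C$ itself is not $\I$-perfect. (Concretely: the witnessing open set $W$ with $\cl{G\cap U}\cap W$ empty or $\I$-small, $\cl{G\cap U}\cap W\neq\emptyset$, gives $C\cap W\in\I$ with $C\cap W\neq\emptyset$ after intersecting with $U$ — this is where one must be a little careful, and I return to it below.)

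For part (ii), the chain of implications ``$D$ is $\K$-perfect $\Rightarrow$ $D$ is $\sigma(\K)$-perfect $\Rightarrow$ $D$ is $\I$-perfect'' is the easy direction in each case, because $\K\subseteq\sigma(\K)\subseteq\I$, so having relatively empty interior with respect to the larger family is formally weaker — wait, it is the other way, so I would instead argue contrapositively: if $D$ fails to be $\I$-perfect, there is basic open $U$ with $\emptyset\neq U\cap D$ and $\cl{U\cap D}\in\I$; since $\I$ is generated by $\K$, write $\cl{U\cap D}=\bigcup_n C_n$ with $C_n\in\K$ closed; each $C_n\cap D$ is closed in the Polish space $\cl{U\cap D}$, and by Baire category some $C_n\cap D$ has nonempty interior in $\cl{U\cap D}$, i.e. there is a basic open $V\subseteq U$ with $\emptyset\neq V\cap D\subseteq C_n$, whence $\cl{V\cap D}\subseteq C_n\in\K$ (using that $\K$ is hereditary). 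This says $D$ is not $\K$-perfect. The implications ``$\K$-perfect $\Rightarrow$ $\sigma(\K)$-perfect $\Rightarrow$ $\I$-perfect'' then follow by the same Baire-category argument applied to $\sigma(\K)$ (a countable union of members of $\K$, hence again a countable union of closed sets in $\K$ by definition of $\sigma(\K)$) and to $\I$, or simply by noting that $\I\cap F(X)=\sigma(\K)$ restricted to closed subsets of $D$, so $\I$-perfect and $\sigma(\K)$-perfect coincide on closed sets, and $\sigma(\K)$-perfect trivially implies $\K$-perfect since $\K\subseteq\sigma(\K)$; the reverse was just shown.

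The main obstacle is the bookkeeping in part (i) around relative versus ambient open sets: when I pass to the open subspace $U$ and invoke Lemma \ref{iperfect} (which is stated for $\gdelta$ subsets of $\baire$, but holds verbatim for $\gdelta$ subsets of any Polish space, in particular the Polish space $U$), I must make sure that ``$\I$-perfect'' computed inside $U$ with respect to $\I\har U$ matches ``$\I$-perfect'' computed in $X$ on the relevant sets — this is immediate since the open subsets of $U$ are exactly the open subsets of $X$ contained in $U$, and $A\in\I$ iff $A\in\I\har U$ for $A\subseteq U$. Once that identification is made cleanly, both directions of (i) and all equivalences in (ii) reduce to the two facts already in hand: Lemma \ref{iperfect} and the Baire category theorem applied to a countable closed cover of a Polish space. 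I would also remark that part (i) is the special case of part (ii) in which $\K$ is the family of all closed sets in $\I$, but I would keep the two arguments separate for clarity.
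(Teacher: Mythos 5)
Your overall strategy matches the paper's: part (i) is Lemma \ref{iperfect} together with a relativization to the open subspace $U$, and part (ii) is a Baire category argument in the Polish space $\cl{U\cap D}$. Part (i) is correct as you set it out (and your remark that ``$\I$-perfect computed in $U$ with respect to $\I\har U$'' agrees with the ambient notion is exactly the bookkeeping one needs).

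Part (ii) as written, however, contains a real misstep at the start. The negation of ``$D$ is $\I$-perfect'' yields an open $U$ with $\emptyset\neq U\cap D\in\I$ --- it does \emph{not} yield $\cl{U\cap D}\in\I$, and accordingly you only obtain $U\cap D\subseteq\bigcup_n C_n$ with $C_n\in\K$, not $\cl{U\cap D}=\bigcup_n C_n$. The stronger claim $\cl{U\cap D}\in\I$ turns out to be equivalent to $U\cap D\in\I$ for closed $D$, but only by the very Baire argument you are about to run, so starting from it is circular. The corrected argument is close to what you wrote: $U\cap D$ is dense open in the Polish space $\cl{U\cap D}$ and is covered by the closed sets $C_n$, so by Baire category some $C_n\cap\cl{U\cap D}$ is non-meager and hence, being closed, has nonempty interior in $\cl{U\cap D}$; shrinking the witnessing open set to some $V\subseteq U$ gives $\emptyset\neq V\cap D\subseteq C_n$, which already shows $D$ is not $\K$-perfect. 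In particular the appeal to hereditarity of $\K$ is both unnecessary and unavailable (it is not a hypothesis of this lemma): you do not need $\cl{V\cap D}\in\K$, only that the fixed $C_n\in\K$ has nonempty relative interior on $D$. Similarly, the concluding claim that ``$\I$-perfect and $\sigma(\K)$-perfect coincide on closed sets because $\I\cap F(X)=\sigma(\K)$'' glosses over the fact that $\I$-perfectness is a statement about the (non-closed) sets $U\cap D$, so the coincidence again rests on the Baire step rather than on the set-theoretic identity alone. With these repairs the proof of (ii) is complete, and since the paper itself gives only the one-line justification ``this follows directly from the Baire category theorem,'' you are in effect supplying the details it omits.
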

\begin{proof}
  (i) Clearly, if $G$ is $\I$-perfect, then $\cl{G}$ is also
  $\I$-perfect. Suppose $\cl{G}$ is $\I$-perfect but $G$ is
  not $\I$-perfect. Then we can find an open set $U$ such
  that $U\cap G\in\I$ and $U\cap G\not=\emptyset$. Consider
  $U\cap\cl{G}$, which is $\I$-perfect because $\cl{G}$ is
  $\I$-perfect. $U\cap\cl{G}$ is a Polish $\I$-perfect space
  which contains a dense $\gdelta$ set in $\I$. By Lemma
  \ref{iperfect} we get a contradiction with the Baire
  category theorem.

  (ii) This follows directly from the Baire category theorem.
\end{proof}

\begin{lemma}\label{iperfectgdelta}
  Let $X$ be a Polish space and let $\I$ be a $\coana$ on
  $\ana$ $\sigma$-ideal generated by closed sets on $X$. Let
  $\K$ be a hereditary coanalytic family of closed sets on
  $X$. Then
  \begin{enumerate}
  \item[(i)] the family of $\I$-perfect $\gdelta$ sets is
    $\ana$,
  \item[(ii)] the family of $\K$-perfect closed sets is
    $\ana$,
  \item[(iii)] the family of $\gdelta$ sets with
    $\K$-perfect closure is $\ana$.
  \end{enumerate}
\end{lemma}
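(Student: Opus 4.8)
The plan is to reduce everything to computing the complexity of a single key relation and then feed it through the coding machinery developed above. For part (i), recall from Lemma \ref{iperfectcl}(i) that a $\gdelta$ set $G$ is $\I$-perfect iff $\cl{G}$ is $\I$-perfect, and that (for $\K$ hereditary) a closed set $D$ is $\K$-perfect iff for every basic open $U$ either $U\cap D=\emptyset$ or $\cl{U\cap D}\not\in\K$. So the first step is to observe that ``$G$ is $\I$-perfect'' unwinds to: for every basic open $U$, either $U\cap G=\emptyset$, or $U\cap G\notin\I$. Using the universal $\gdelta$ set $\tilde G\subseteq\baire\times X$, the clause $U\cap\tilde G_t=\emptyset$ is \emph{open} in $t$ by Lemma \ref{gdeltaeffros} (its negation is open), hence Borel. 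For the other clause I would use that $\I$ is $\coana$ on $\ana$: the set $\tilde G\cap(\baire\times U)$ is (relatively) open in $\tilde G$, so $\{t: U\cap\tilde G_t\in\I\}$ is the $t$-section condition of an analytic set, hence $\{t: U\cap\tilde G_t\notin\I\}$ is $\ana$. Thus for each fixed basic open $U$ the set $\{t: U\cap\tilde G_t=\emptyset \text{ or } U\cap\tilde G_t\notin\I\}$ is $\ana$, and intersecting over the countably many basic open $U$ keeps us in $\ana$. That gives (i).

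For part (iii), by Lemma \ref{iperfectcl}(i) again, ``$\cl{G}$ is $\K$-perfect'' is equivalent — since $\I$ here is the $\sigma$-ideal generated by $\K$ and, by Lemma \ref{iperfectcl}(ii), $\K$-perfect $=$ $\I$-perfect for closed sets — to ``$G$ is $\I$-perfect'', \emph{provided} the $\sigma$-ideal generated by $\K$ is $\coana$ on $\ana$; but that holds by the classical fact cited in the introduction (\cite[Theorem 35.38]{Kechris}), so (iii) follows from (i). Alternatively, and more directly, I would argue (iii) by hand the same way as (i): $\cl{G}$ is $\K$-perfect iff for every basic open $U$ either $U\cap\cl{G}=\emptyset$ or $\cl{U\cap\cl{G}}\notin\K$; the map $t\mapsto\cl{\tilde G_t}\in F(X)$ is Borel by the remark following Lemma \ref{gdeltaeffros}, so $U\cap\cl{\tilde G_t}=\emptyset$ is Borel in $t$, and $\{t: \cl{U\cap\cl{\tilde G_t}}\in\K\}$ is coanalytic because $\K$ is coanalytic and the relevant closure-then-membership map is Borel; negating and intersecting over $U$ stays in $\ana$.

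Part (ii) is the same computation with one layer of coding removed: work directly with the universal closed set $\tilde C\subseteq\baire\times X$ from the Effros-space discussion, for which $t\mapsto\tilde C_t\in F(X)$ is Borel. Then ``$\tilde C_t$ is $\K$-perfect'' is, by heredity of $\K$, the condition that for every basic open $U$ either $U\cap\tilde C_t=\emptyset$ or $\cl{U\cap\tilde C_t}\notin\K$; both sub-conditions are, respectively, Borel and ($\coana$)-complement in $t$ by the same reasoning as above, and the countable intersection over $U$ lands in $\ana$.

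The only real point requiring care — the \textbf{main obstacle} — is checking that the auxiliary maps sending a code $t$ to the closed sets $\cl{\tilde G_t}$, and $\cl{U\cap \tilde C_t}$ (and $\cl{U\cap\cl{\tilde G_t}}$), are Borel into the Effros space $F(X)$, so that composing with the coanalytic predicate ``$\,\cdot\,\in\K$'' yields a coanalytic set of $t$'s, and symmetrically that ``$\,\cdot\,\in\I$'' applied to the analytically-parametrized family $U\cap\tilde G_t$ yields a coanalytic set of $t$'s via the $\coana$-on-$\ana$ hypothesis. These are exactly the kind of facts recorded in Lemma \ref{gdeltaeffros} and the surrounding discussion; once they are in hand, the proof is a routine bookkeeping of pointclasses, and the countable intersections over basic open sets cause no complexity increase since $\ana$ is closed under countable intersections.
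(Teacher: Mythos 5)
Your proposal is correct and uses essentially the same computation as the paper: unwind the definition of $\I$- (resp.\ $\K$-) perfectness as a countable conjunction over basic open $U$, observe that the emptiness clause is Borel in the code $t$ by Lemma~\ref{gdeltaeffros}, handle the membership clause via the $\coana$-on-$\ana$ hypothesis (for (i)) or the Borelness of the closure map composed with the coanalytic predicate $\K$ (for (ii), (iii)), and use closure of $\ana$ under countable intersections. The paper's proof of (iii) is exactly your ``alternative, more direct'' argument.

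One caution about your first attempt at (iii): you assert that ``$\I$ here is the $\sigma$-ideal generated by $\K$,'' but the lemma's hypotheses treat $\I$ and $\K$ as two unrelated objects, so that reduction is not available as stated. You could repair it by introducing a fresh $\sigma$-ideal $\I'=\sigma(\K)$ and applying (i) to $\I'$, but to know $\I'$ is $\coana$ on $\ana$ you would be invoking precisely the classical fact that the paper reproves in Corollary~\ref{coanaonana}, and Corollary~\ref{coanaonana} itself uses (iii). So that route is circular in context; the direct computation — the one you give second and the one the paper uses — is the right one.
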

\begin{proof}
  (i) We see that $G\in\gdelta$ is $\I$-perfect if and only
  if
  $$G\not=\emptyset\quad\wedge\quad\forall
  U\mbox{ basic open } (G\cap U\not=\emptyset\Rightarrow
  G\cap U\not\in\I).$$ This is a $\ana$ condition by Lemma
  \ref{gdeltaeffros} and the assumption that $\I$ is
  $\coana$ on $\ana$.

  (ii) Note that a closed set $D$ is $\K$-perfect if and
  only if
  $$D\not=\emptyset\quad\wedge\quad\forall U\mbox{ basic
    open }\ (D\cap U\not=\emptyset\Rightarrow \cl{D\cap
    U}\not\in \K).$$ This is $\ana$ condition since the
  closure is a Borel map.

  (iii) This follows (ii) and the fact that the closure is a
  Borel map.
\end{proof}

\begin{remark}
  In addition to coding $\gdelta$ sets by Luzin schemes, we
  can also code continuous partial functions on their dense
  $\gdelta$ subsets, that is triples $(G,f,G')$ where $G$ is
  a $\gdelta$ set, $G'$ is a dense $\gdelta$ subset of $G$
  and $f:G'\rightarrow\baire$ is continuous. For a sample
  method of coding see e.g.  \cite[Proposition
  2.6]{Kechris}. If $D\subseteq\baire\times\baire$ is a
  closed set and $G\subseteq\baire$ is a $\gdelta$ set, then
  we write
  $$f:G\xrightarrow{*} D$$ to denote that $f$
  is a continuous function from a dense $\gdelta$ subset of
  $G$ and the graph of $f$ is contained in $D$.
\end{remark}

It is well-known (see \cite[Theorem 35.38]{Kechris} or
\cite[Lemma 4.8]{Far.Zpl}) that if $\K$ is a coanalytic
hereditary family of closed sets, then the $\sigma$-ideal
generated by $\K$ is $\coana$ on $\ana$. Let us present a
new proof of this fact, which uses idealized forcing and
Theorem \ref{Sol.dich}.

\begin{corollary}\label{coanaonana}
  Let $X$ be a Polish space. If $\K$ is a coanalytic
  hereditary family of closed sets in $X$, then the
  $\sigma$-ideal generated by $\K$ is $\coana$ on $\ana$.
\end{corollary}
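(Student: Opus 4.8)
The plan is to reduce the statement to the Solecki dichotomy (Theorem~\ref{Sol.dich}) via the forcing-theoretic characterization developed in the preceding lemmas. Let $\I$ denote the $\sigma$-ideal generated by $\K$; since $\K$ consists of closed sets, $\I$ is generated by closed sets, so Theorem~\ref{Sol.dich} applies: for an analytic set $A\subseteq X$ we have $A\in\I$ if and only if $A$ does \emph{not} contain an $\I$-positive $\gdelta$ set. Combining this with Lemma~\ref{iperfectdense} (every $\I$-positive $\gdelta$ set contains an $\I$-perfect $\gdelta$ subset) and Lemma~\ref{iperfectcl}(i) (for $\gdelta$ sets, $\I$-perfectness is equivalent to $\I$-perfectness of the closure), we obtain: $A\notin\I$ iff $A$ contains a $\gdelta$ set $G$ with $\cl G$ being $\I$-perfect. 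By Lemma~\ref{iperfectcl}(ii), $\cl G$ being $\I$-perfect is the same as $\cl G$ being $\K$-perfect. Thus, for analytic $A\subseteq X$,
\begin{displaymath}
  A\notin\I\quad\Longleftrightarrow\quad\exists\, G\in\gdelta\ \big(G\subseteq A\ \wedge\ \cl G\ \text{is $\K$-perfect}\big).
\end{displaymath}

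Next I would run this through the parametrization. Fix a universal analytic set and a universal $\gdelta$ set $\tilde G\subseteq\baire\times X$ as in the section (after passing to $X=\baire$, or working directly with the Souslin-scheme coding for general Polish $X$). For an analytic set $A=A_x$ coded by $x\in\baire$, the right-hand side above reads: there exists $t\in\baire$ such that $\tilde G_t\subseteq A_x$ and $\cl{\tilde G_t}$ is $\K$-perfect. The containment $\tilde G_t\subseteq A_x$ is coanalytic in $(t,x)$ (it is $\forall y\,(y\in\tilde G_t\Rightarrow y\in A_x)$, and $\tilde G_t$ is Borel in $(t,y)$ while $A_x$ is analytic in $(x,y)$, so the implication is Borel-or-coanalytic, universally quantified over $y$ — hence coanalytic). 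By Lemma~\ref{iperfectgdelta}(iii), the set of $t$ for which $\cl{\tilde G_t}$ is $\K$-perfect is \emph{analytic}. So the matrix ``$\tilde G_t\subseteq A_x\ \wedge\ \cl{\tilde G_t}$ is $\K$-perfect'' is a conjunction of a coanalytic and an analytic condition — which is not obviously $\coana$, and this is the crux.

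The main obstacle, then, is the mixed complexity: we need $\{x:A_x\notin\I\}$ to be analytic (equivalently $\{x:A_x\in\I\}$ coanalytic), and the witnessed condition is $\exists t\,(\coana\wedge\ana)$. I would resolve this by unfolding the analytic condition ``$\cl{\tilde G_t}$ is $\K$-perfect'' into an existential quantifier over $\baire$ — by Lemma~\ref{iperfectgdelta}(iii) it is $\ana$, hence of the form $\exists s\,R(t,s)$ with $R$ closed (or Borel) — and \emph{simultaneously} unfolding $\K$-perfectness in a way that already encodes the internal branches of $\cl{\tilde G_t}$, so that the coanalytic containment $\tilde G_t\subseteq A_x$ can be replaced by a Borel statement about those branches. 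Concretely: being $\K$-perfect says every nonempty relative basic open piece $\cl{U\cap\tilde G_t}$ is not in $\K$; pushing to the Baire space, one arranges a tree-witness $s$ from which one can continuously read points of $\tilde G_t$ densely, and then ``$\tilde G_t\subseteq A_x$'' becomes ``$A_x$ contains all points read off from $s$'', which — since $A_x$ is analytic and the points form a $\gdelta$ (indeed closed-in-$\tilde G_t$) set — can itself be absorbed into a further existential unfolding of the analytic set $A$ along that $\gdelta$ subset. After all unfoldings the matrix is Borel and all quantifiers are existential over $\baire$, giving that $\{x:A_x\notin\I\}$ is analytic; equivalently $\I$ is $\coana$ on $\ana$. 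Alternatively — and perhaps more cleanly — one invokes the tree representation $\T_\I$ from the proof of Theorem~\ref{axiomaclosed}: $A_x\notin\I$ iff Adam has a winning strategy in some game $G_\I(Y,T)$ with $\proj[T]=A_x$, and the existence of a winning strategy in these (unfolded, $\gdelta$-payoff) games is itself $\ana$ in the parameter once $\K$-coanalyticity feeds the ``$\proj[T(\tau)]\notin\I$'' side-conditions through Lemma~\ref{iperfectgdelta}; I would present whichever unfolding is shortest, but I expect the quantifier bookkeeping in merging the $\K$-perfect witness with the containment to be the only real work.
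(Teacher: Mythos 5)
You have correctly isolated the crux: after rewriting $A_x\notin\I$ as an existential statement over $\gdelta$-codes $t$ with the matrix ``$\tilde G_t\subseteq A_x$ and $\cl{\tilde G_t}$ is $\K$-perfect,'' the first conjunct is $\coana$ in $(t,x)$ while the second is $\ana$ in $t$, so the quantifier $\exists t$ does not obviously produce a $\ana$ set. This is indeed the whole difficulty, and the paper's proof is designed precisely to dissolve it.

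Where your proposal has a genuine gap is in the resolution. The paper does not merely ``unfold'' the containment; it replaces it outright. Fixing a closed $D\subseteq X^2\times\baire$ with $A=\pi[D]$, it proves
\begin{displaymath}
A_x\notin\I\quad\Longleftrightarrow\quad\exists\,G\in\G\ \ \exists\,f:G\xrightarrow{*}D_x,
\end{displaymath}
where $\G$ is the ($\ana$) family of $\I$-perfect $\gdelta$ sets and $f:G\xrightarrow{*}D_x$ means $f$ is a coded partial continuous function defined on a dense $\gdelta$ subset of $G$ whose graph lies in the closed set $D_x$. The right-hand side is $\ana$ because the graph condition is against a \emph{closed} set (no hidden $\coana$), and the whole thing quantifies existentially over codes. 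The point you gesture at (``absorb into a further existential unfolding of $A$ along that $\gdelta$ subset'') is exactly this function $f$, but you never say what the unfolded witness is, and more importantly you never establish that such a witness \emph{exists} whenever $G\subseteq A_x$. That existence is the real lemma: the paper derives it from properness and continuous reading of names for $\P_\I$ (equivalently one could run a Jankov--von~Neumann uniformization of $D_x$ over $G$ and pass to a dense $\gdelta$ of continuity, using Lemma~\ref{iperfect}/\ref{iperfectcl} to keep $\I$-positivity), together with $\shoenfield$-absoluteness to move between the statement ``$\forall y\in G\,\exists z\,(y,z)\in D_x$'' and its forced form. Without naming the witness object and invoking some uniformization principle, the ``after all unfoldings the matrix is Borel'' conclusion does not follow; in fact a naive existential unfolding of $A_x$ pointwise over $y\in \tilde G_t$ still leaves a $\forall y$ in front, which is the $\coana$ you were trying to kill. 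Your alternative route through the games $G_\I(Y,T)$ faces the same issue disguised differently: Eve's moves carry $\I$-positivity side conditions, so showing ``Adam has a winning strategy'' is $\ana$ in the code of $T$ presupposes a definable handle on $\I$ — which is what we are trying to prove.
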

\begin{proof}
  Let $\I$ be the $\sigma$-ideal generated by $\K$ and let
  $A\subseteq X\times\ X$ be $\ana$. Denote by $\G$ the
  family of $\I$-perfect $\gdelta$ sets. By Lemmas
  \ref{iperfectcl} and \ref{iperfectgdelta}, $\G$ is $\ana$.
  By Theorem \ref{Sol.dich} and Lemma \ref{iperfectdense},
  if $x\in X$, then
  $$A_x\not\in\I\qquad\mbox{iff}\qquad \exists G\in\G\quad
  G\subseteq A_x.$$ Let $D\subseteq X^2\times\baire$ be a
  closed set such that $A=\pi[D]$ ($\pi$ denotes the
  projection to the first two coordinates). Note that
  $G\subseteq A_x$ is equivalent to $$\forall y\in G\
  \exists z\in\baire\ (y,z)\in D_x.$$ By
  $\shoenfield$-absoluteness we get a name $\dot z_x$ such
  that $$G\Vdash (\dot g,\dot z_x)\in D_x.$$ Now, by
  continuous reading of names and properness of $\P_\I$ we
  get a $G'\in\G$, $G'\subseteq G$ and a continuous function
  $f_x:G'\rightarrow \baire$ reading $\dot z_x$. Notice that
  the graph of $f_x$ is contained in $D_x$, so
  $f_x:G'\xrightarrow{*} D_x$.  Coversely, if there is such
  function $f$, then $\dom(f)$ is an $\I$-perfect
  $\gdelta$-set contained in $A_x$. Thus, we have shown that
  $$A_x\not\in\I\qquad\mbox{iff}\qquad \exists G\in\G\quad
  \exists f:G\xrightarrow{*} D_x.$$ Using the coding of
  $\gdelta$ sets and partial continuous functions, one can
  easily check that $$\exists f:G\xrightarrow{*} D_x$$ is a
  $\ana$ formula. Thus, the whole formula is $\ana$ and we
  are done.
\end{proof}

Analytic sets in a Polish space $X$ can be coded by a
$\ana$-universal $\lana$ set on $\baire\times X$. In the
remaining part of this Section we fix a universal analytic
set $\tilde A\subseteq\baire\times X$ which is $\lana$ and
\textit{good} (cf. \cite[Section 3.H.1]{Mosch}). The set
$\tilde A$ will be used to code analytic sets in $X$ as well
as $\lana(t)$ sets for each $t\in\baire$.

We say that a set $S\subseteq\baire$ \textit{codes a
  $\sigma$-ideal $\I$ of analytic sets} if the family
$\I=\{\tilde A_t: t\in S\}$ is a $\sigma$-ideal of analytic
sets.  Let $\iform(v)$ be a $\coana$ formula. Note that the
famlily of analytic sets whose codes satisfy $\iform(v)$ is
$\coana$ on $\ana$ (because $\tilde A$ is good and
$\iform(v)$ is a $\coana$ formula).

\begin{lemma}[Folklore]\label{trik}
  If $\A$ is a $\coana$ on $\ana$ family of analytic sets,
  then $\A$ is downward closed, i.e. if $A,B\in\ana$ are
  such that $A\subseteq B$ and $B\in\A$, then $A\in\A$.
\end{lemma}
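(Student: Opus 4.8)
The statement is the standard ``folklore'' reduction trick, so I would argue by contradiction, exploiting that the hypothesis ``$\A$ is $\coana$ on $\ana$'' forces, for every analytic $C\subseteq\baire\times X$, the parameter set $\{t\in\baire:C_t\in\A\}$ to be coanalytic. The idea is to cook up a single analytic $C$ whose vertical sections toggle between $B$ and $A$ according to membership in a properly analytic set $P$, so that the associated parameter set \emph{is} $P$ itself, contradicting its non-coanalyticity.

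First I would suppose, towards a contradiction, that $A,B\in\ana$ with $A\subseteq B$ and $B\in\A$ but $A\notin\A$. Fix a set $P\subseteq\baire$ which is $\lana$ but not $\lcoana$ (e.g.\ a $\lana$-complete set); such a set exists. Then set
$$C=(\baire\times A)\cup(P\times B)\subseteq\baire\times X.$$
Since $\baire\times A$ and $P\times B$ are both analytic and $\ana$ is closed under finite unions, $C$ is analytic.

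Next I would compute the vertical sections. For $t\notin P$ we have $C_t=A$, while for $t\in P$ we have $C_t=A\cup B=B$, using $A\subseteq B$. Hence, under the assumption $B\in\A$ and $A\notin\A$,
$$\{t\in\baire:C_t\in\A\}=P.$$
But $\A$ is $\coana$ on $\ana$ and $C$ is analytic, so this set is coanalytic; therefore $P$ is $\lcoana$, contradicting the choice of $P$. This forces $A\in\A$, which is the claim.

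I do not expect any genuine obstacle: the only care needed is in the precise reading of ``$\coana$ on $\ana$'' for families of analytic sets (the parametrized form, with parameters ranging over $\baire$, which is exactly the coding used in this section) and the elementary existence of a properly $\lana$ subset of $\baire$; everything else is the bookkeeping of sections above.
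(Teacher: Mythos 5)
Your proof is correct and is essentially the same construction as the paper's: the paper takes $Z\in\ana\setminus\coana$ and defines $L=(Z\times B)\cup(\baire\times A)$, so that $L_t=B$ for $t\in Z$ and $L_t=A$ otherwise, then deduces the same contradiction. One small slip: since the hypothesis yields that $\{t:C_t\in\A\}$ is boldface $\coana$, you should pick $P\in\ana\setminus\coana$ (boldface) and conclude ``$P$ is $\coana$'', not $\lcoana$ — though a $\lana$-complete set does serve, as it is not boldface $\coana$ either.
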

\begin{proof}
  Suppose $A\subseteq B$ are $\ana$ and $B\in\A$. Let
  $Z\subseteq\baire$ be such that $Z\in\ana\setminus\coana$.
  Take $L\subseteq\baire\times X$ such that $$(t,x)\in
  L\quad\Leftrightarrow\quad (t\in Z\wedge x\in B)\vee x\in
  A.$$ As $\{t\in\baire: L_t\in\A\}\in\coana$, we conclude
  that $B\in\A$.
\end{proof}

Suppose $V\subseteq W$ is a generic extension and in $V$ we
have a $\coana$ on $\ana$ $\sigma$-ideal $\I$. Let
$\iform(v)$ be a $\coana$ formula which codes the
$\sigma$-ideal of analytic sets $\I\cap\ana$. By $\I^W$ we
denote the family of analytic sets whose codes satisfy
$\iform(v)$ in $V[G]$. This definition does not depend on
the formula $\iform(v)$ since if $\iform'(v)$ is another
such formula, then
$$\forall t\in\baire\quad \iform(t)\Leftrightarrow \iform'(t)$$ is a
$\negshoenfield$ sentence and hence it is absolute for
$V\subseteq W$.

Now we prove Theorem \ref{absoluteness}

\begin{proof}[Proof of Theorem \ref{absoluteness}]
  Let $\kform(v)$ be a $\coana$ formula defining the set of
  codes of closed sets in $\I$. By $\K^W$ we denote the
  family of closed sets in $W$, whose codes satisfy
  $\kform(v)$ (as previously, this does not depend on the
  formula $\kform(v)$).

  First we show that in $W$ the family $\K^W$ is hereditary.
  Consider the following sentence $$\exists
  t,s\in\baire\quad (\neg\kform(s)\ \wedge\ \kform(t)\
  \wedge\ \tilde C_s\subseteq\tilde C_t).$$ It is routine to
  check that it is $\shoenfield$ and hence absolute for
  $V\subseteq W$. This shows that $\K^W$ is hereditary.

  Next we show that $\I^W$ is a $\sigma$-ideal of analytic
  sets. Let $D\subseteq \baire\times X\times\baire$ be a
  closed set such that $\pi[D]=\tilde A$ (here $\pi$ denotes
  the projection to the first two coordinates).  Consider
  the following formula $\iform'(v)$:
  $$\neg\ (\exists G\in\gdelta\quad \cl{G}\mbox{ is
  }\kform\mbox{-perfect}\ \wedge\ \exists f:G\xrightarrow{*}
  D_v)$$ (writing that $\cl{G}$ is $\kform$-perfect we mean
  that $\cl{G}$ is perfect with respect to the family of
  closed sets defined by $\kform(v)$).  Using Lemma
  \ref{iperfectgdelta}(iii) we can check that $\iform'(v)$
  is a $\coana$ formula. From the proof of Corollary
  \ref{coanaonana} and from Lemmas \ref{iperfect},
  \ref{iperfectdense} and \ref{iperfectcl} we conclude that
  in $V$ we have $$\forall t\in \baire\quad
  \iform(t)\Leftrightarrow \iform'(t).$$ This is a
  $\negshoenfield$ sentence and hence it holds in $W$.
  Therefore, it is enough to check that
  $(\iform')^W(\baire)$ codes a $\sigma$-ideal of analytic
  sets. However, it follows from Theorem \ref{Sol.dich} and
  from Lemmas \ref{iperfect}, \ref{iperfectdense} and
  \ref{iperfectcl}, that $(\iform')^W(\baire)$ codes the
  $\sigma$-ideal generated by $\kform(\baire)^W$.

  The fact that $\I^W$ is $\coana$ on $\ana$ follows now
  from the remarks preceding this proposition.
\end{proof}

We also have the following alternative proof.

\begin{proof}[Alternative proof of Theorem
  \ref{absoluteness}] Throughout this proof we denote the
  closure of a set $A$ by $\ocl{A}$. Withoug loss of
  generality assume that $\I$ is $\lcoana$ on $\lana$ and
  $X=\baire$. 

  We will use the following notation. If $\varphi(v)$ is a
  formula and $t\in\baire$, then by $\lana(t)\wedge\varphi$
  (respectively $\lbor(t)\wedge\varphi$) we denote the
  family of $\lana(t)$ (respectively $\lbor(t)$) sets whose
  codes satisfy $\varphi(v)$.

  Let $\iform(v)$ be a $\lcoana$ formula defining the set of
  codes of analytic sets in $\I$. Let $\K$ be the family of
  closed sets in $\I$ and let $\kform(v)$ be a $\lcoana$
  formula defining the set of codes of the (closed) sets in
  $\K$ (in terms of the universal closed set $\tilde C$).
  Consider the formula $\hat K(v)$ saying that $\ocl{\tilde
    A_v}\in\K$.  Note that $\hat K(v)$ can be written as
  follows
  $$\forall s\in\baire\quad \tilde C_s\subseteq\ocl{\tilde
    A_v}\ \ \Rightarrow\ \ K(s)$$ and notice that it is a
  $\lcoana$ formula.

  Consider the set $F\subseteq\baire\times X$ defined as
  follows: $$(t,x)\in F\quad\mbox{ iff }\quad
  x\in\bigcup(\lbor(t)\wedge\hat K).$$ By the usual coding
  of $\lbor$ sets we get that $F$ is $\lcoana$.
  \begin{lemma}\label{effective}
    For each $t\in\baire$ we have
    $$\bigcup(\lana(t)\wedge\hat K)=\bigcup(\lbor(t)\wedge
    \hat K)=\bigcup(\lana(t)\wedge I).$$
  \end{lemma}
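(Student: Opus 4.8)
The plan is to prove Lemma \ref{effective} by establishing the two inclusions separately, using the characterizations of $\I$-positivity in terms of $\I$-perfect $\gdelta$ sets together with continuous reading of names. Since $\lbor(t)\subseteq\lana(t)$ and $\hat K$ is just a restriction on codes, the inclusion $\bigcup(\lbor(t)\wedge\hat K)\subseteq\bigcup(\lana(t)\wedge\hat K)$ is immediate. The real content is in the chain
$$\bigcup(\lana(t)\wedge\hat K)\subseteq\bigcup(\lana(t)\wedge I)\subseteq\bigcup(\lbor(t)\wedge\hat K).$$

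\textbf{First inclusion.} If $A$ is a $\lana(t)$ set whose code satisfies $\hat K$, then $\ocl{A}\in\K$, so in particular $A\subseteq\ocl{A}\in\I$; hence $A\in\I$ (as $\I$ is a $\sigma$-ideal of analytic sets, using Lemma \ref{trik} to pass to the analytic subset $A$ of the closed set $\ocl A$). So the code of $A$ satisfies $I$ as well, witnessing $A\subseteq\bigcup(\lana(t)\wedge I)$. Actually, more carefully, I want to show that every point of $\bigcup(\lana(t)\wedge\hat K)$ lies in some $\lana(t)$ set in $\I$ — but this is exactly the same $A$, so the inclusion holds pointwise.

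\textbf{Second inclusion.} This is the main obstacle. Suppose $x\in\bigcup(\lana(t)\wedge I)$, say $x\in A$ where $A$ is $\lana(t)$ and $A\in\I$. This alone does \emph{not} give a $\lbor(t)$ set through $x$ with closure in $\K$, because $\I$-positivity is not the issue — we need to find, effectively from $t$ and $x$, a $\lbor(t)$ set containing $x$ whose closure is in $\K$. The idea is to run the argument from Corollary \ref{coanaonana} in reverse and \emph{effectively}: starting from a single point $x$, one builds a $\gdelta$ (in fact basic-clopen, or a single point, or a small $\lbor(t)$ set) neighborhood structure around $x$. Concretely: if $x\in A\in\I$, then by Solecki's dichotomy (Theorem \ref{Sol.dich}) $A$ contains no $\I$-positive $\gdelta$ set; but what we want is different — we want a $\lbor(t)$ set $B\ni x$ with $\ocl B\in\K$. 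Take $B=\{x\}$ if $\{x\}\in\K$; in general take $B$ to be an appropriate small $\lbor(t)$ piece. The cleanest route: since $x\in A$ and $A\in\I$, there is a closed set $C\in\K$ with $x\in C$ (as $\I$ is generated by closed sets $\K$, and $\K$ is hereditary by the assumption that $\I$ is generated by closed sets — every point of an $\I$ set lies in a generating closed set, which is in $\K$ because $\K$ can be taken to be the family of \emph{all} closed sets in $\I$, which is hereditary). Then $B=C\cap(\text{small basic clopen around }x)$, or just take $B$ to be a $\lbor(t)$ set with $x\in B\subseteq C$ and $\ocl B\subseteq C\in\K$, so $\ocl B\in\K$ by heredity of $\K$. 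The subtlety is definability: we need $B$ coded by a $\lbor(t)$ code, which requires that $C$ itself be chosen from parameters available in $t$ — here one invokes a uniformization / $\Sigma^1_1$-selection argument (e.g. Jankov--von Neumann or effective selection in $\lana(t)$) to pick, uniformly for $x\in A$, a code for a generating closed set $C_x\in\K$ containing $x$, and then $B$ is $\lbor(t)$ relative to that code.

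\textbf{Expected main obstacle.} The hard part is the \emph{effectiveness} in the second inclusion: ensuring that the $\lbor(t)$ witness has a code recursive in $t$ (and not just in some larger real). This is where one must be careful to use the good universal set $\tilde A$, the effective coding of $\lbor$ sets, and an effective version of the selection theorem — together with the fact (Lemma \ref{iperfectgdelta} and its relativizations) that the relevant families of closed/$\gdelta$ sets are $\lana$ \emph{lightface}, so that the selection can be done with codes recursive in the parameters. I would structure the write-up as: (1) the trivial inclusion; (2) the first nontrivial inclusion via Lemma \ref{trik}; (3) the second via an effective point-to-Borel-set selection, citing the effective Jankov--von Neumann theorem, and verifying the codes land in $\lbor(t)$.
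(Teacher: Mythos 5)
Your plan has the right shape — the easy inclusion $\bigcup(\lbor(t)\wedge\hat K)\subseteq\bigcup(\lana(t)\wedge\hat K)$ and the inclusion $\bigcup(\lana(t)\wedge\hat K)\subseteq\bigcup(\lana(t)\wedge I)$ (via $\hat K(s)\Rightarrow I(s)$, which is what the paper also uses) are fine. But the third inclusion, $\bigcup(\lana(t)\wedge I)\subseteq\bigcup(\lbor(t)\wedge\hat K)$, is exactly the place where your argument breaks, and the fix you propose does not work.

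Concretely: given $x\in A$ with $A\in\lana(t)$ and $A\in\I$, you want to produce a $\lbor(t)$ set $B\ni x$ with $\ocl B\in\K$. Your idea is to first find a closed $C\in\K$ with $x\in C$ (true: $A$ is covered by closed sets in $\K$), then shrink to a $\lbor(t)$ piece inside $C$, then argue by heredity. The problem is definability. The witnessing cover $\langle C_n\rangle$ of $A$ is not $\lbor(t)$-definable — it is just an arbitrary sequence. To recover a $\lbor(t)$ code you invoke ``effective Jankov--von Neumann selection,'' but the relation you would have to uniformize is roughly $\{(x,c): x\in\tilde C_c\wedge\tilde C_c\in\K\}$, whose second coordinate is constrained by the $\lcoana$ condition $K(c)$, so the relation is $\lcoana(t)$, not $\lana(t)$. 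Jankov--von Neumann does not apply, and $\lcoana$ uniformization (Kondô) yields a $\lcoana$, not Borel, selector. Worse, even if you somehow chose a code $c_x$ measurably in $x$, the set $B$ you build is $\lbor(t,c_x)$ rather than $\lbor(t)$; the code $c_x$ depends on the individual point $x$, which can be arbitrary and in particular not $\Delta^1_1(t)$. So the effectiveness you flag as ``the hard part'' is not just a technicality to be smoothed over — the selection approach does not close the gap.

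The paper's argument for this inclusion is of a different kind entirely: it proves the contrapositive by a tree construction, avoiding any need to select codes. Setting $C=\bigcup(\lana(t)\wedge\hat K)$, one assumes $A\in\lana(t)$ with $A\cap C=\emptyset$ and shows $A\notin\I$. Given a recursive-in-$t$ tree $T$ with $A=\proj[T]$ and any putative cover $\langle D_n\rangle$ by closed sets in $\I$ (i.e.\ in $\K$), one observes that for every $\sigma\in T$ the $\lana(t)$ set $\proj[T(\sigma)]$ has closure not in $\K$ (else its code would satisfy $\hat K$ and it would sit inside $C$, contradicting $A\cap C=\emptyset$), hence $\proj[T(\sigma)]\not\subseteq D_n$ for any $n$; a diagonal induction through the $D_n$'s then produces a point of $A$ outside $\bigcup_n D_n$. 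Also note the paper establishes $\bigcup(\lana(t)\wedge\hat K)=\bigcup(\lbor(t)\wedge\hat K)$ directly by the First Reflection Theorem (since $\hat K$ is a $\lcoana$ property), rather than deriving it from the other two inclusions. You should replace your selection step with this kind of contrapositive tree argument; as written, your proof of the third inclusion has a genuine hole.
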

  \begin{proof}
    Without loss of generality assume that $t=0$. The first
    equalitiy follows from the First Reflection Theorem
    (since $\hat K(v)$ is a $\lcoana$ formula). Denote
    $C=\bigcup(\lbor(t)\wedge \hat K)=\bigcup(\lana(t)\wedge
    \hat K)$.

    In the second equality, the left-to-right inclusion is
    obvious since $\hat \kform(s)$ implies $\iform(s)$, for
    each $s\in\baire$. We need to prove that if $A\in\lana$
    is not contained in $C$, then $A\not\in\I$. Suppose
    $A\in\lana$ and $A\not\subseteq C$. Since $C\in\lcoana$,
    we may assume that $A\cap C=\emptyset$. Let $T$ be a
    recursive pruned tree on $\omega\times\omega$ such that
    $A=\proj[T]$. If $A\in\I$, then there is a sequence of
    closed sets $\langle D_n:n<\omega\rangle$ such that each
    $D_n\in\I$ and $A\subseteq\bigcup_{n<\omega} D_n$. By
    induction we construct a sequence of
    $\langle\tau_n\in\otr\rangle$ and $\sigma_n\in T$ such
    that for each $n<\omega$ the following hold
    \begin{itemize}
    \item $\sigma_{n+1}\supsetneq\sigma_n$ and
      $\tau_{n+1}\supsetneq\tau_n$,
    \item $\proj[T(\sigma_n)]\subseteq [\tau_n]$,
    \item $\proj[T(\sigma_{n-1})]\cap[\tau_n]\,\cap\,
      D_n=\emptyset$
    \end{itemize}

    We take $\sigma_{-1}=\emptyset$. Suppose $\sigma_n$ and
    $\tau_n$ are constructed. Notice that
    $\proj[T(\sigma_n)]$ is $\lana$. Since $A\cap
    C=\emptyset$ we see that
    $\ocl{(\proj[T(\sigma_n)])}\not\in\K$.  Consequently,
    $\proj[T(\sigma_n)]\not\subseteq D_n$ and hence there is
    $\tau_{n+1}\supsetneq\tau_n$,
    $[\tau_{n+1}]\subseteq\tau_n$ such that
    \begin{itemize}
    \item[(i)]
      $\proj[T(\sigma_n)]\cap[\tau_{n+1}]\not=\emptyset$,
    \item[(ii)] $\proj[T(\sigma_n)]\cap\cap[\tau_{n+1}]\cap
      D_n=\emptyset$.
    \end{itemize}
    Using (i) find $\sigma_{n+1}\supsetneq\sigma_n$ such
    that $\sigma_{n+1}\in T$ and
    $\proj[T(\sigma_{n+1})]\subseteq[\tau_{n+1}]$.

    Now, if $s=\bigcup_{n<\omega}\sigma_n$, then $s\in\lim
    T$, so $\pi(s)\in A$, but
    $\pi(s)\not\in\bigcup_{n<\omega}D_n$. This ends the
    proof of the lemma.
  \end{proof}

  Consider the following formula $\iform'(v)$ ($v$ is a
  variable):
  $$\forall z\in X\quad z\in\tilde A_v\Rightarrow z\in F_v.$$ Note
  that $\iform'$ is a $\lcoana$ formula and
  $$V\models\forall t\in\baire\quad \iform(t)\Leftrightarrow
  \iform'(t).$$ This is a $\lnegshoenfield$ sentence, so by
  absoluteness we see that $\iform$ and $\iform'$ define the
  same set of codes of analytic sets in $W$.

  \medskip

  Now we will show that $\I^W$ is a $\sigma$-ideal generated
  by closed sets. The fact that $\I^W$ is closed under
  taking analytic subsets follows from Lemma \ref{trik}
  because $\I^W$ is $\coana$ on $\ana$.

  Let us show that $\I^W$ is closed under countable unions.
  Pick a recursive bijection
  $\lceil\cdot\rceil:(\baire)^\omega\rightarrow \baire$. The
  following sentence
  \begin{displaymath}
    \forall \langle t_n: n<\omega\rangle\in
    (\baire)^\omega\quad ((\forall n<\omega\ \iform'(t_n))\
    \Rightarrow\ \iform'(\lceil t_n: n<\omega\rceil))\tag{$*$}
  \end{displaymath}
  is $\lnegshoenfield$ and hence it is absolute. Note that
  for any $\langle t_n: n<\omega\rangle\in(\baire)^\omega$
  we have $F_{t_k}\subseteq F_{\lceil t_n:n<\omega\rceil}$
  for each $k<\omega$ (because $t_k\in\lbor(\lceil
  t_n:n<\omega\rceil)$). Therefore $(*)$ holds in $V$ and
  hence also in $W$. This shows that the family of analytic
  sets coded by $(\iform')^{W}(\baire)$ is closed under
  countable unions.

  To see that $\I^W$ is generated by closed sets, take any
  $t\in W\cap\baire$ such that $({\tilde A}^W)_t\in\I^W$.
  This means that $W\models I'(t)$, so $({\tilde
    A}^W)_t\subseteq (F^W)_t$. Let $\langle
  t_n:n<\omega\rangle\in W$ be the the sequence of all
  elements of $\baire$ in $W$ which are $\lbor(t)$ and
  satisfy $\hat K(v)$. By the definition of $F$ we see that
  $$W\models({\tilde A}^W)_t\subseteq \bigcup_{n<\omega}({\tilde
    A}^W)_{t_n}$$ is satisfied in $W$. Let $\langle
  s_n:n<\omega\rangle\in W$ be a sequence of elements of
  $W\cap\baire$ such that $W\models({\tilde
    C}^W)_{s_n}=\ocl{({\tilde A}^W)_{t_n}}$ for each
  $n<\omega$. Now $W\models \hat K(t_n)$ implies $W\models
  K(s_n)$. Therefore $W\models({\tilde C}^W)_{s_n}\in\I^W$
  because $$\forall t,s\in\baire\quad(\tilde A_s\subseteq
  \tilde C_t\ \wedge\ K(t))\ \Rightarrow\ I(s)$$ is
  $\negshoenfield$ and holds in $V$. Since
  $$W\models({\tilde A}^W)_t\subseteq
  \bigcup_{n<\omega}({\tilde C}^W)_{s_n},$$ we conclude that
  $\I^W$ is generated by closed sets.
\end{proof}

\begin{remark}
  It is worth noting that analogously as in the alternative
  proof of Proposition \ref{absoluteness} we can get the
  following. If $\I$ is a $\coana$ on $\ana$ $\sigma$-ideal
  and $V\subseteq W$ is a generic extension, then $\I^W$ is
  a $\coana$ on $\ana$ $\sigma$-ideal in $W$.
\end{remark}

Zapletal defines in \cite{Zpl:FI} the class of
\textit{iterable} $\sigma$-ideals (see \cite[Definition
5.1.3]{Zpl:FI} for a definition without large cardinals, and
\cite[Definition 5.1.2]{Zpl:FI} for a definition under large
cardinal assumptions). $\coana$ on $\ana$ $\sigma$-ideals
generated by closed sets are iterable in the sense of
\cite[Definition 5.1.3]{Zpl:FI}.

\section{Products and iterations}\label{sec:productsanditerations}

If $\I$ is a $\sigma$-ideal on $X$, then we write
$\forall^\I x\in X\ \varphi(x)$ to denote that $\{x\in X:
\neg\varphi(x)\}\in\I$.  Let $\I$ and $\J$ be
$\sigma$-ideals on Polish spaces $X$ and $Y$, respectively.
Recall that the \textit{Fubini product of $\I$ and $\J$},
denoted by $\I\otimes\J$, is the $\sigma$-ideal of those
$A\subseteq X\times Y$ such that $$\forall^\I x\in X\
\forall^\J y\in Y\ (x,y)\not\in A.$$ If $\I_k$ is a
$\sigma$-ideal on $X_k$, for each $k<n$, then we naturally
extend the above definition to define $\bigotimes_{k<n}
\I_k=\I_0\otimes\bigotimes_{0<k<n}\I_k$. For each $n<\omega$
we also define the \textit{Fubini powers} of a
$\sigma$-ideal $\I$ as follows $\I^n=\bigotimes_{k<n}\I$.

\begin{lemma}[Folklore]\label{i2coanaonana}
  Suppose $\I$ and $\J$ are $\coana$ on $\ana$
  $\sigma$-ideals on Polish spaces $X$ and $Y$,
  respectively. Let $A\subseteq X\times Y$ be a $\ana$ set
  in $\I\otimes\J$. There is a $\ana$ set $D$ such that $A\cap
  D=\emptyset$ and $$\forall^\I x\in X\,\forall^\J y\in
  Y\quad (x,y)\in D.$$
\end{lemma}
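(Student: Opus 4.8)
The plan is to reduce the statement to a reflection argument. First observe that it suffices to produce a Borel set $A'\subseteq X\times Y$ with $A\subseteq A'$ and $A'\in\I\otimes\J$. Indeed, given such an $A'$, put $D=(X\times Y)\setminus A'$; then $D$ is Borel, hence $\ana$, we have $A\cap D=\emptyset$ since $A\subseteq A'$, and $(X\times Y)\setminus D=A'\in\I\otimes\J$, which is precisely the assertion $\forall^\I x\in X\ \forall^\J y\in Y\ (x,y)\in D$.

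So it remains to find such an $A'$, and for this I would invoke the First Reflection Theorem \cite[Theorem 35.10]{Kechris}. To apply it, I first check that $\I\otimes\J$ is $\coana$ on $\ana$. Let $Z$ be Polish and $R\subseteq Z\times X\times Y$ be $\ana$. Writing $R_{(z,x)}$ for the fibre of $R$ over $(z,x)$ in $Y$, the set $S=\{(z,x)\in Z\times X: R_{(z,x)}\notin\J\}$ is $\ana$ because $\J$ is $\coana$ on $\ana$; and then $\{z\in Z: S_z\in\I\}$ is $\coana$ because $\I$ is $\coana$ on $\ana$. Since by the definition of the Fubini product $\{z\in Z: R_z\in\I\otimes\J\}=\{z\in Z: S_z\in\I\}$, this shows that $\I\otimes\J$ is $\coana$ on $\ana$.

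Consequently the family $\Phi$ of all $\ana$ subsets of the Polish space $X\times Y$ lying in $\I\otimes\J$ is $\coana$ on $\ana$, and $A\in\Phi$ by hypothesis. The First Reflection Theorem then yields a Borel set $A'$ with $A\subseteq A'$ and $A'\in\Phi$, hence $A'\in\I\otimes\J$; as explained above, $D=(X\times Y)\setminus A'$ is the required set.

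I do not expect a serious obstacle: the lemma is genuinely folklore, and the only points needing care are (a) recognising that $\forall^\I x\ \forall^\J y\ (x,y)\in D$ is literally the statement $(X\times Y)\setminus D\in\I\otimes\J$, so that the problem becomes a reflection problem, and (b) the short verification that $\I\otimes\J$ inherits being $\coana$ on $\ana$ from $\I$ and $\J$. A more self-contained route would set $B=\{x: A_x\notin\J\}$ --- which is $\ana$ since $\J$ is $\coana$ on $\ana$, and lies in $\I$ since $A\in\I\otimes\J$ --- and then reflect each fibre $A_x$ (for $x\notin B$) inside $\J$ to a Borel $\J$-set containing it; but carrying this out uniformly in $x$ again amounts to a parametrised form of reflection, so the argument above is cleaner.
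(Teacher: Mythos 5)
Your proof is correct, and it takes a genuinely different (and arguably cleaner) route from the one in the paper. The paper's proof works effectively: after relativizing to lightface classes, it sets $U^1=\bigcup(\lana\cap\I)$ and $(U^2)_t=\bigcup(\lana(t)\cap\J)$, invokes the First Reflection Theorem to replace these by $\bigcup(\lbor\cap\I)$ and $\bigcup(\lbor(t)\cap\J)$ respectively (so that they are countable unions of $\lbor$-sets, hence $\lcoana$ and in the respective ideals), puts $C=(U^1\times Y)\cup U^2$, and takes $D=(X\times Y)\setminus C$. In other words, the paper runs the reflection in two stages, one unparametrized pass for $\I$ and one $x$-parametrized pass for $\J$ — this is essentially the ``more self-contained route'' you sketch at the end and decide against. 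Your argument instead proves the useful intermediate fact that $\I\otimes\J$ is itself $\coana$ on $\ana$ (a fact the paper uses elsewhere but does not state explicitly here), and then applies the boldface First Reflection Theorem a single time to the property ``$B\in\I\otimes\J$'' to find a Borel $A'\supseteq A$ in $\I\otimes\J$. The translation $\forall^\I x\,\forall^\J y\ (x,y)\in D \Longleftrightarrow (X\times Y)\setminus D\in\I\otimes\J$ is correct and reduces the lemma to exactly this reflection problem. What your approach buys is modularity (the closure of the $\coana$-on-$\ana$ property under Fubini products is isolated as a separate claim) and a one-shot application of reflection rather than a two-stage, parametrized one; what it costs is that you rely on the boldface reflection theorem for the compound property, whereas the paper keeps the effective bookkeeping visible. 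Both proofs hinge on the same core tool and both deliver a Borel (hence $\ana$) set $D$.
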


\begin{proof}
  To simplify notation suppose that $X=Y=\baire$,
  $A\in\lana$, and $\I$ and $\J$ are $\lcoana$ on $\lana$.
  Put $$U^1=\bigcup(\lana\cap\I)$$ and let
  $U^2\subseteq\baire\times\baire$ be such that for each
  $t\in\baire$ we have $$(U^2)_t=\bigcup(\lana(t)\cap\J).$$
  By the First Reflection Theorem we have
  $U^1=\bigcup(\lbor\cap\I)$ and
  $(U^2)_t=\bigcup(\lbor(t)\cap\J)$ for each $t\in\baire$.
  Therefore, by the usual coding of $\lbor$ sets, we get
  that $U^1$ and $U^2$ are $\lcoana$. Put
  $$C=(U^1\times Y)\,\cup\, U^2.$$ Notice that $A\subseteq
  C$ (since otherwise we get that $A\not\in\I\otimes\J$).
  Now $B=X\times Y\setminus C$ is as needed.
\end{proof}

Generalizing the finite Fubini products, one can define the
Fubini product of length $\alpha$ for any $\alpha<\omega_1$.
A game-theoretic definition of $\I^\alpha$ is given in
\cite[Definition 5.1.1]{Zpl:FI}.  Definition \ref{Fubini}
below (equivalent to \cite[Definition 5.1.1]{Zpl:FI})
appears in \cite[p.  74]{CPA}. If $0<\beta<\alpha$ are
countable ordinals, then we write $\pi_{\alpha,\beta}$ for
the projection to the first $\beta$ coordinates from
$\prod_{\gamma<\alpha}X_\gamma$ to $\prod_{\gamma<\beta}
X_\gamma$. For each $D\subseteq
\prod_{\gamma<\alpha}X_\gamma$, we define
$\pi_{\alpha,0}[D]$ to be $X$. If $A\subseteq
\prod_{\gamma<\beta+1}X_\gamma$ and $x\in
\prod_{\gamma<\beta}X_\gamma$, then $A_x$ denotes the
vertical section of $A$ at $x$. If $A\subseteq X$ and $x\in
X$, then we put $A_x=A$.

\begin{definition}
  Let $\alpha$ be a countable ordinal, $\langle
  X_\beta:\beta<\alpha\rangle$ be a sequence of Polish spaces and
  $\bar\I=\langle \I_\beta:\beta<\alpha\rangle$ be a
  sequence of $\sigma$-ideals, $\I_\beta$ on $X_\beta$,
  respectively. We say that a set $D\subseteq
  \prod_{\beta<\alpha} X_\beta$ is an
  \textit{$\bar\I$-positive cube} if
  \begin{itemize}
  \item[(i)] for each $\beta<\alpha$ and for each
    $x\in\pi_{\alpha,\beta}[D]$ the set
    $$(\pi_{\alpha,\beta+1}[D])_x\mbox{ is }\I_{\beta+1}\mbox{-positive},$$
  \item[(ii)] for each limit $\beta<\alpha$ and $x\in
    X^\beta$,
    $$x\in\pi_{\alpha,\beta}[D]\quad\Leftrightarrow\quad
    \forall\gamma<\beta\ \ \
    x\har\gamma\in\pi_{\alpha,\gamma}[D].$$
  \end{itemize} 
  We say that $D$ is an \textit{$\bar\I$-full cube} if
  additionally we have
  \begin{itemize}
  \item[(i')] for each $\beta<\alpha$ and for each
    $x\in\pi_{\alpha,\beta}[D]$ the set
    $$(\pi_{\alpha,\beta+1}[D])_x\mbox{ is }\I_{\beta+1}\mbox{-full}.$$
  \end{itemize}
  If $\class$ is a projective pointclass, then we say that
  $D$ is an \textit{$\bar\I$-positive} (resp. \textit{full}) $\class$
    \textit{cube} if $D$ is $\bar\I$-positive (resp. full) cube and
  additionally
  \begin{itemize}
  \item for each $\beta\leq\alpha$ the set
    $\pi_{\alpha,\beta}[D]\in\class(\prod_{\gamma<\beta}
    X_\gamma)$.
  \end{itemize}
\end{definition}

Analogous definitions appear also in \cite{Kanovei},
\cite{CPA} and \cite{Zpl:FI}. Now we define Fubini products.

\begin{definition}\label{Fubini}
  Let $\alpha$ be a countable ordinal, $\langle
  X_\beta:\beta<\alpha\rangle$ be a sequence of Polish
  spaces and $\bar\I=\langle \I_\beta:\beta<\alpha\rangle$
  be a sequence of $\sigma$-ideals, $\I_\beta$ on $X_\beta$,
  respectively. A set $B\subseteq \prod_{\beta<\alpha}
  X_\beta$ belongs to $\bigotimes_{\beta<\alpha}\I_\beta$ if
  and only if there is an $\bar\I$-full cube $D\subseteq
  \prod_{\beta<\alpha} X_\beta$ disjoint from $B$.
\end{definition}

\begin{remark}\label{i2}  
  Let $X$ be a Polish space and let $\I$ be a $\coana$ on
  $\ana$ $\sigma$-ideal on $X$, generated by closed sets.
  Suppose $A\subseteq X^2$ is $\ana$. We will show that
  either $A$ belongs to $\I^2$, or else $A$ contains an
  $\I^2$-positive $\gdelta$ set.

  Let $D\subseteq X^2\times\baire$ be a closed set such that
  $\pi[D]=A$ (here $\pi$ denotes the projection to the first
  two coordinates).  By Lemma \ref{iperfectgdelta}, the
  family $\G$ of $\I$-perfect $\gdelta$ sets is $\ana$ (in
  the sense of Section \ref{sec:coanaidclosed}, in terms of
  $\tilde G$). Put $A'=\{x\in X: A_x\not\in \I\}$.  If
  $A'\in \I$, then clearly $A\in \I^2$.  Suppose that
  $A'\not\in \I$.  

  By Theorem~\ref{Sol.dich}, for each $x\in A'$ there is an
  $\I$-perfect $\gdelta$ set $G$ contained in $A_x$. Pick
  $x\in A'$ and such a $G\subseteq A_x$.  Using
  $\shoenfield$-absoluteness, we get a $\P_\I$-name $\dot y$
  for an element of $D$ such that (we identify $(X^2)_x$
  with $X$ here) $$G\Vdash\pi(\dot y)=\dot g$$ ($\dot g$ is
  the name for the generic point).

  Now, by properness and continuous reading of names for
  $\P_\I$, there is an $\I$-perfect $\gdelta$ set
  $G'\subseteq G$, and a continuous function
  $f:G'\rightarrow\baire$ with $f\subseteq D$. To see this,
  find a continuous function $f'$ reading $\dot y$, take
  suitable countable elementary submodel $M\prec H_\kappa$
  ($\kappa$ big enough), find $G'$ consisting of generic
  reals over $M$ and put $f=f'\har G$. The fact that
  $(x,f(x))\in D$ for $x\in G$ follows from
  $\ana$-absoluteness between $M[x]$ and $V$.

  Therefore, for each $x\in A'$ the following holds
  $$\exists G\in\G\quad \exists
  f:G\xrightarrow{*} D_x.$$ This is a $\ana$ formula, so by
  $\shoenfield$-absoluteness we have
  $$A'\Vdash\exists G\in\G\quad \exists
  f:G\xrightarrow{*} D_{\dot g}.$$ Again, by properness and
  continuous reading of names (applied to the name for (a
  code of) $\dom(f)$) we get an $\I$-perfect $\gdelta$ set
  $G'\subseteq A'$ and a continuous function
  $g:G'\rightarrow\G$ such that for each $x\in G'$ we have
  $\tilde G_{g(x)}\subseteq A_x$. Let $G=\{(x,y)\in X^2:
  x\in G'\wedge y\in \tilde G_{g(x)}\}=(g,id)^{-1}[\tilde
  G]$.  This is an $\I^2$-positive $\gdelta$ set contained
  in $A$.
\end{remark}

Note that the following lemma immediately follows from Lemma
\ref{i2coanaonana}

\begin{lemma}[Folklore]\label{incoanaonana}
  Suppose $\bar\I=\langle\I_k:k<n\rangle$ is a sequence of
  $\coana$ on $\ana$ $\sigma$-ideals, $\I_k$ on $X_k$. Let
  $A\subseteq \prod_{k<n} X_k$ be a $\ana$ set in
  $\bigotimes_{k<n}\I_k$. There is an $\bar\I$-full $\ana$
  cube $D$ disjoint from $A$.
\end{lemma}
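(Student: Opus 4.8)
The plan is to prove Lemma~\ref{incoanaonana} by induction on $n$, stripping off one coordinate at a time by means of Lemma~\ref{i2coanaonana}. Before doing so I would record the (also folklore) fact that the class of $\coana$ on $\ana$ $\sigma$-ideals is closed under finite Fubini products: if $\I,\J$ are $\coana$ on $\ana$ on $X,Y$ and $E\subseteq W\times X\times Y$ is $\ana$, then $F=\{(w,x):E_{(w,x)}\notin\J\}$ is $\ana$ (because $\J$ is $\coana$ on $\ana$), hence $\{w:E_w\in\I\otimes\J\}=\{w:F_w\in\I\}$ is $\coana$ (because $\I$ is $\coana$ on $\ana$); iterating, each $\bigotimes_{0<k<n}\I_k$ is $\coana$ on $\ana$, which is exactly what lets Lemma~\ref{i2coanaonana} be applied repeatedly. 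The base case $n=1$ is Lemma~\ref{i2coanaonana} applied with the second coordinate a one-point space carrying the trivial ideal: it yields a $\ana$ set $D\subseteq X_0$ with $A\cap D=\emptyset$ and $X_0\setminus D\in\I_0$, i.e.\ a $\bar\I$-full $\ana$ cube disjoint from $A$.

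For the inductive step, write $\prod_{k<n}X_k=X_0\times Y$ with $Y=\prod_{0<k<n}X_k$, $\bar\J=\langle\I_k:0<k<n\rangle$ and $\J=\bigotimes_{0<k<n}\I_k$, so that $\bigotimes_{k<n}\I_k=\I_0\otimes\J$ and $\J$ is $\coana$ on $\ana$ by the observation above. Since $A\in\I_0\otimes\J$ is $\ana$, Lemma~\ref{i2coanaonana} produces a $\ana$ set $D^{(1)}\subseteq X_0\times Y$ with $A\cap D^{(1)}=\emptyset$ and $\forall^{\I_0}x\,\forall^{\J}\vec y\ (x,\vec y)\in D^{(1)}$; equivalently $N=\{x\in X_0:Y\setminus(D^{(1)})_x\notin\J\}\in\I_0$. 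Put $Q=X_0\setminus N$. For every $x\in Q$ the set $Y\setminus(D^{(1)})_x$ lies in $\J$ and — reading off the construction in the proof of Lemma~\ref{i2coanaonana}, where this set appears as $(U^2)_x=\bigcup(\lbor(x)\cap\J)$, a countable union of Borel sets in $\J$ — is analytic (indeed Borel). Hence the inductive hypothesis applied to $\bar\J$ provides a $\bar\J$-full $\ana$ cube $E_x\subseteq(D^{(1)})_x$, and since every ingredient here (the $U^1,U^2$ of Lemma~\ref{i2coanaonana}) is uniform in its parameters, this choice can be made uniformly in $x\in Q$. Setting $D=\bigcup_{x\in Q}\{x\}\times E_x$, we get $D\subseteq D^{(1)}$, hence $D\cap A=\emptyset$; the first-coordinate projection of $D$ is $Q$, which is $\I_0$-full since $X_0\setminus Q=N\in\I_0$; and for each $x\in Q$ the section $D_x=E_x$ is a $\bar\J$-full cube. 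Unwinding the definition of a $\bar\I$-full cube, this says precisely that $D$ is one, and $D$ is $\ana$ provided the recursion has been carried out uniformly. This closes the induction.

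The conceptual content is trivial — a full cube sitting inside each section of a full cube is again a full cube — so the only real work, and the place I expect to have to be careful, is the definability bookkeeping. Beyond the bare statement of Lemma~\ref{i2coanaonana} I would need that its proof supplies (i) an exceptional set $N$ that is $\coana$ (so that $Q$ is $\ana$) and (ii) section-complements $Y\setminus(D^{(1)})_x$ that are uniformly analytic — both visible in that proof, since there $N=U^1$ is $\lcoana$ and $Y\setminus(D^{(1)})_x=(U^2)_x$ is a countable union of Borel $\J$-sets. To make the final $D$ genuinely $\ana$ rather than merely $\shoenfield$, I would run the whole induction with analytic parameters throughout, using the good universal analytic set fixed in Section~\ref{sec:coanaidclosed} to code the intermediate cubes; alternatively one states Lemma~\ref{incoanaonana} with a parameter from the outset and argues relative to it. None of this is deep, but it is what a complete write-up of this ``folklore'' lemma must contain.
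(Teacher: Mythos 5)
The paper gives no proof of Lemma~\ref{incoanaonana}; it simply says it ``immediately follows from Lemma~\ref{i2coanaonana},'' so there is nothing to compare against directly. Your inductive strategy, together with the preliminary observation that $\coana$ on $\ana$ $\sigma$-ideals are closed under finite Fubini products, is exactly the natural elaboration of that remark, and the base case and the identification $N=U^1$ (hence $Q$ is $\ana$ and $\I_0$-full) are correct.

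There is, however, a concrete slip in the inductive step. You apply the inductive hypothesis to $Y\setminus(D^{(1)})_x=(U^2)_x$ and assert that this set is ``uniformly analytic (indeed Borel)'' because it is a countable union of Borel $\J$-sets. That is true for each \emph{fixed} $x$, but the union is over the $\lcoana(x)$-complete set of Borel codes, so $U^2$ itself is only $\lcoana$ --- exactly what the proof of Lemma~\ref{i2coanaonana} says. There is no uniform analytic (or Borel) code for $(U^2)_x$ as a function of $x$, so the inductive hypothesis, which needs an analytic input uniformly in the parameter, cannot legitimately be applied to these sections, and the resulting $D=\bigcup_{x\in Q}\{x\}\times E_x$ is not visibly $\ana$.

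The fix is easy and actually simplifies your argument: apply the inductive hypothesis to $A_x$ rather than to $Y\setminus(D^{(1)})_x$. The set $A_x$ is uniformly $\lana(x,a)$ (where $a$ is a code for $A$), and $A_x\in\J$ for every $x\in Q=X_0\setminus U^1$ (this uses First Reflection, since $\{x:A_x\notin\J\}$ is $\lcoana$ and in $\I_0$, hence contained in $U^1$). The $\bar\J$-full $\ana$ cube $E_x$ produced by the inductive hypothesis is then disjoint from $A_x$, which is all one needs for $D\cap A=\emptyset$; the detour through $D^{(1)}$ disappears entirely, and $U^1=\bigcup(\ana(a)\cap\I_0)$ is all you retain from Lemma~\ref{i2coanaonana}'s proof. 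Equivalently, and perhaps what ``immediately'' is meant to suggest, one can do the whole thing in one step: for each $m<n$ put $C_m=\{(\bar x,y):y\in\bigcup(\lbor(\bar x,a)\cap\I_m)\}$, a $\lcoana(a)$ set, and take $D=\prod_{k<n}X_k\setminus\bigcup_{m<n}(C_m\times\prod_{m<k<n}X_k)$; a short First-Reflection argument shows $A\subseteq\bigcup_m(C_m\times\cdots)$, and $D$ is an $\bar\I$-full $\ana$ cube because each $(C_m)_{\bar x}$ is a countable union of Borel $\I_m$-sets. This dispenses with the uniformity bookkeeping altogether.
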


\medskip

If $\alpha$ is a countable ordinal and
$\langle\I_\beta:\beta<\alpha\rangle$ is a sequence of
iterable $\sigma$-ideals, $\I_\beta$ on $X_\beta$, then we
denote by $*_{\beta<\alpha}\P_{\I_\beta}$ the countable
support iteration of $\P_{\I_\beta}$'s of length $\alpha$.
If $A\subseteq \prod_{\beta<\alpha} X_\beta$ is an
$\bar\I$-positive $\Bor$ cube, then we associate with $A$
the following condition $p_\alpha(A)$ in
$*_{\beta<\alpha}\P_{\I_\beta}$.  If $\beta<\alpha$, then
$p_\alpha(A)(\beta)$ is a
$*_{\gamma<\beta}\P_{\I_\gamma}$-name $\dot Y_\beta$ (for an
$\I_\beta$-positive Borel set) such that
$$p_\beta(\pi_{\alpha,\beta}[A])\Vdash \dot Y_\beta=A_{\dot
  g_\beta},$$ where $\dot g_\beta$ is the name for the
$*_{\gamma<\beta}\P_{\I_\gamma}$-generic point in
$\prod_{\gamma<\beta} X_\gamma$. Zapletal proved the
following (the statement in \cite[Theorem 5.1.6]{Zpl:FI}
deals with just one $\sigma$-ideal $\I$ but the proof shows
the stronger statement).

\begin{theorem}[{Zapletal, \cite[Theorem
    5.1.6]{Zpl:FI}}]\label{csiteration}
  Let $\alpha$ be a countable ordinal. If
  $\bar\I=\langle\I_\beta:\beta<\alpha\rangle$ is a sequence
  of iterable $\sigma$-ideals on Polish spaces $X_\beta$,
  respectively, then the function $p_\alpha$ is a dense
  embedding from the poset of $\bar\I$-positive $\Bor$ cubes
  (ordered by inclusion) into
  $*_{\beta<\alpha}\P_{\I_\beta}$. Moreover, any
  $\bigotimes_{\beta<\alpha}\I_\beta$-positive Borel set in
  $\prod_{\beta<\alpha} X_\beta$ contains an
  $\bar\I$-positive $\Bor$ cube and the forcing
  $\P_{\bigotimes_{\beta<\alpha}\I_\beta}$ is equivalent to
  $*_{\beta<\alpha}\P_{\I_\beta}$.
\end{theorem}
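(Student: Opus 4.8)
The plan is to prove both assertions by induction on $\alpha$, the base case $\alpha=1$ being trivial since $p_1$ is the identity on the $\I_0$-positive Borel sets. It is convenient first to unwind the definitions: by clauses (i),(ii) a $\bar\I$-positive $\Bor$ cube $D\subseteq\prod_{\beta<\alpha}X_\beta$ is nothing but a coherent tower $\langle\pi_{\alpha,\beta}[D]:\beta\leq\alpha\rangle$ of $\bar\I\har\beta$-positive $\Bor$ cubes over the initial products; and since $\alpha$ is countable the countable support iteration $*_{\beta<\alpha}\P_{\I_\beta}$ coincides with the full inverse limit of the shorter iterations. Thus at a limit stage both the poset of cubes and the iteration are inverse limits of the data at smaller $\beta$, and the real content of the limit step is that the inverse limit of the embeddings $p_\beta$ remains a \emph{dense} embedding. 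The facts driving the argument come from iterability: in every intermediate extension $\I_\beta$ is again a definable $\sigma$-ideal whose forcing $\P_{\I_\beta}$ is proper with continuous reading of names, and the iteration $*_{\beta<\alpha}\P_{\I_\beta}$ is proper.

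\emph{Successor step.} Assume the theorem for $\gamma$ and let $\alpha=\gamma+1$. Factor $*_{\beta<\gamma+1}\P_{\I_\beta}$ as $(*_{\beta<\gamma}\P_{\I_\beta})*\P_{\dot\I_\gamma}$, where by iterability $\P_{\dot\I_\gamma}$ is (forced to be) the idealized forcing attached to $\I_\gamma$ in the intermediate extension. To a $\bar\I$-positive $\Bor$ cube $D$ over $\gamma+1$ associate the pair $(D',\dot Y_\gamma)$ with $D'=\pi_{\gamma+1,\gamma}[D]$ and $\dot Y_\gamma$ the name forced by $p_\gamma(D')$ to be the section $D_{\dot g_\gamma}$ (an $\I_\gamma$-positive Borel set); this is exactly the recursive clause defining $p_{\gamma+1}$. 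That $p_{\gamma+1}$ is an order embedding reduces, via the inductive hypothesis for $p_\gamma$, to the observation that two cubes $D_1,D_2$ over $\gamma+1$ are compatible iff $D_1',D_2'$ are compatible and, on a common positive refinement, the fibers $D_1\cap D_2$ stay $\I_\gamma$-positive on a positive set of coordinates, which is precisely $p_\gamma(D_1')*\dot Y_1\not\perp p_\gamma(D_2')*\dot Y_2$. For density, given $(q,\dot Z)$ use properness and continuous reading of names in $\P_{\I_\gamma}$ to arrange that $\dot Z$ names a Borel set with continuously read Borel code, then by the inductive hypothesis pull $q$ below some $p_\gamma(D')$; the set $D=\{(x,y):x\in D',\ y\in\dot Z[x]\}$ is a $\bar\I$-positive $\Bor$ cube with $p_{\gamma+1}(D)\leq(q,\dot Z)$.

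\emph{Limit step.} Let $\alpha$ be a limit and $p\in*_{\beta<\alpha}\P_{\I_\beta}$. Fix a countable elementary submodel $M\prec H_\theta$ containing $p$ and all parameters and, using properness of the iteration, a master condition $\bar p\leq p$ that is $(M,*_{\beta<\alpha}\P_{\I_\beta})$-generic. Let $D$ be the set of sequences $\langle x_\beta:\beta<\alpha\rangle$ that are \emph{stepwise $M$-generic below $\bar p$}: for each $\beta<\alpha$, $\langle x_\gamma:\gamma<\beta+1\rangle$ meets every dense subset of $*_{\gamma<\beta+1}\P_{\I_\gamma}$ lying in $M$ and is compatible with $\bar p\har(\beta+1)$. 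Then $D$ is Borel (countably many Borel requirements per coordinate), it satisfies cube clause (ii) at limits automatically, and its fiber over $\langle x_\gamma:\gamma<\beta\rangle$ at a successor $\beta$ is the set of reals $\P_{\I_\beta}$-generic over $M[\langle x_\gamma:\gamma<\beta\rangle]$ inside the prescribed Borel set; by definability of $\I_\beta$ and properness of $\P_{\I_\beta}$ this is an $\I_\beta$-positive Borel set (in idealized forcing the set of generics over a countable model is always positive Borel). Hence $D$ is a $\bar\I$-positive $\Bor$ cube with $p_\alpha(D)\leq p$, which gives density; the order-embedding property at $\alpha$ is the inverse limit of the successor-level statements. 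This step is the crux: producing a \emph{Borel} positive cube from stepwise generics genuinely uses the global properness of the iteration (what iterability delivers) together with the definability of each $\I_\beta$ needed to make ``generic over a countable model'' a positive Borel fiber.

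\emph{The ``moreover'' and equivalence.} The same induction shows, for Borel $B\subseteq\prod_{\beta<\alpha}X_\beta$, that $B\in\bigotimes_{\beta<\alpha}\I_\beta$ iff the trivial condition of $*_{\beta<\alpha}\P_{\I_\beta}$ forces $\dot g\notin B$: an $\bar\I$-full cube disjoint from $B$ witnesses the forcing statement since a full cube contains all stepwise-generic sequences, and conversely a stepwise-generic construction against $B$ yields such a full cube. So a $\bigotimes_{\beta<\alpha}\I_\beta$-positive Borel $B$ has some $p\Vdash\dot g\in B$; taking a cube $D$ with $p_\alpha(D)\leq p$ as above, every stepwise-$M$-generic sequence of $D$ lies in $B$, while the remaining sequences of $D$ are carried by a full cube and hence form a set in $\bigotimes_{\beta<\alpha}\I_\beta$, so after removing them we may assume $D\subseteq B$. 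Finally, the $\bar\I$-positive $\Bor$ cubes are then dense in $\P_{\bigotimes_{\beta<\alpha}\I_\beta}$, and they embed densely into $*_{\beta<\alpha}\P_{\I_\beta}$ by the first part, so the two forcings are equivalent.
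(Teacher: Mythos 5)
The paper does not prove Theorem~\ref{csiteration}: it is quoted from \cite[Theorem~5.1.6]{Zpl:FI}, with only the remark that Zapletal's proof (stated there for a single $\sigma$-ideal $\I$) goes through verbatim for a sequence $\bar\I$. So there is no in-paper argument to compare against. Your sketch is in fact the standard route Zapletal takes: induction on $\alpha$ with a factorization argument at successors and, at limits, the Borel set of stepwise $M$-generic sequences below a master condition, using properness of the iteration (supplied by iterability) and definability of the ideals to show this set is a $\bar\I$-positive $\Bor$ cube.

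A few steps are, however, stated too quickly. At the successor step, the set $D=\{(x,y):x\in D',\,y\in\dot Z[x]\}$ is a cube only after $D'$ is first shrunk to a positive Borel set on which each fiber $\dot Z[x]$ is $\I_\gamma$-positive; since $\{x:\dot Z[x]\notin\I_\gamma\}$ is merely $\ana$, this is not free and is exactly where the definability package of iterability (absoluteness plus the $\coana$ on $\ana$ property) is used. At the limit step, cube clause~(ii) for the stepwise-generic set $D$ is emphatically \emph{not} automatic: the equivalence ``$M$-generic at every successor stage below $\bar p$ iff $M$-generic for the whole countable-support iteration up to $\beta$'' is precisely Shelah's preservation theorem for proper countable-support iterations, not a syntactic consequence of your definition of $D$; you should cite it rather than wave at it. Finally, the ``remaining sequences'' sentence in the last paragraph is both off and unneeded: if $p\Vdash\dot g\in\check B$ and $D$ consists of stepwise $M$-generic sequences below $p$, then every $\vec x\in D$ satisfies $M[\vec x]\models\vec x\in B$, and $\ana$-absoluteness between $M[\vec x]$ and $V$ gives $D\subseteq B$ outright; there is nothing to remove, and the talk of a separate full cube carrying the rest of $D$ does not make sense for that $D$.
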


Kanovei and Zapletal proved also the following (again, the
statement of \cite[Theorem 5.1.9]{Zpl:FI} deals with one
$\sigma$-ideal but the proof generalizes to the statement
below).

\begin{theorem}[{Kanovei, Zapletal, \cite[Theorem
    5.1.9]{Zpl:FI}}]\label{KanZpl}
  Let $\alpha$ be a countable ordinal and
  $\bar\I=\langle\I_\beta:\beta<\alpha\rangle$ be a sequence
  of iterable $\sigma$-ideals on Polish spaces $X_\beta$,
  respectively. If $A\subseteq \prod_{\beta<\alpha}X_\beta$
  is $\ana$, then either $A\in
  \bigotimes_{\beta<\alpha}\I_\beta$, or else $A$ contains
  an $\bar\I$-positive $\Bor$ cube.
\end{theorem}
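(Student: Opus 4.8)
The plan is a transfinite induction on $\alpha$, built around the iteration representation of Theorem~\ref{csiteration}. To make the limit stages work I would prove the formally stronger statement: having fixed a closed set $F$ with $\pi[F]=A$ (where $\pi$ forgets the last, $\baire$-valued, coordinate), either $A\in\bigotimes_{\beta<\alpha}\I_\beta$, or there is an $\bar\I$-positive $\Bor$ cube $D\subseteq\prod_{\beta<\alpha}X_\beta$ \emph{together with} a continuous map $g\colon D\to\baire$ with $(x,g(x))\in F$ for all $x\in D$; in particular $D\subseteq A$. Carrying the uniformising $g$ along is essential: since $A$ is only analytic, the naive candidate $\{x:\forall\gamma<\alpha\ x\har\gamma\in\pi_{\alpha,\gamma}[A]\}$ for a cube at a limit level is in general strictly larger than $A$, and only a coherently built witness pins membership in $A$ down. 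The base case $\alpha=1$ --- every analytic $\I_0$-positive set contains a Borel $\I_0$-positive set, indeed one carrying such a continuous witness --- is a standard property of iterable $\sigma$-ideals \cite{Zpl:FI}: force with $\P_{\I_0}$ below a Borel positive subset of $A$, choose a name $\dot y$ with $(\dot g,\dot y)\in F$ using $\ana$-absoluteness, and apply the continuous reading of names (cf. the argument in Remark~\ref{i2}).

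For $\alpha=\beta+1$, put $A'=\{x\in\prod_{\gamma<\beta}X_\gamma:A_x\notin\I_\beta\}$, which is analytic since $\I_\beta$ is $\coana$ on $\ana$. Since a direct unwinding of Definition~\ref{Fubini} gives $A\in\bigotimes_{\gamma<\beta+1}\I_\gamma$ iff $A'\in\bigotimes_{\gamma<\beta}\I_\gamma$, either the inductive hypothesis puts $A'$ in $\bigotimes_{\gamma<\beta}\I_\gamma$, hence $A$ in $\bigotimes_{\gamma<\beta+1}\I_\gamma$, or it produces an $\bar\I\har\beta$-positive $\Bor$ cube $D'\subseteq A'$. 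Via $p_\beta$ (Theorem~\ref{csiteration}) this $D'$ is a condition in $*_{\gamma<\beta}\P_{\I_\gamma}$ forcing $\dot g_\beta\in A'$; the features of iterability we need ($\coana$ on $\ana$, properness, continuous reading of names) survive to the extension, so there $A_{\dot g_\beta}$ is analytic and $\I_\beta$-positive, and the base-case argument, run over the generic, yields a name $\dot Y_\beta$ for a Borel $\I_\beta$-positive $Y\subseteq A_{\dot g_\beta}$ together with a name for a continuous $h$ with $(\dot g_\beta,y,h(y))\in F$ for $y\in Y$. Then $(p_\beta(D'),\dot Y_\beta)$ is a condition in $*_{\gamma<\beta+1}\P_{\I_\gamma}$, which by the density of cubes in the iteration (Theorem~\ref{csiteration}) may be replaced by $p_{\beta+1}(D)$ for an $\bar\I$-positive $\Bor$ cube $D$; amalgamating the fibrewise maps $h$ over the generic produces the continuous $g$ on $D$ with $(x,g(x))\in F$, so $D\subseteq A$.

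The limit case $\alpha$ is where the real work lies, and where I expect the main obstacle. Fixing an increasing sequence $\gamma_n\nearrow\alpha$, I would build an increasing, coherent sequence of conditions $p_{\gamma_n}(D_n)\in*_{\beta<\gamma_n}\P_{\I_\beta}$ together with coherent continuous maps $g_n$ on $D_n\subseteq\pi_{\alpha,\gamma_n}[A]$ uniformising $F$ above $D_n$ (so that each $x\in D_n$ is completed by $g_n(x)$ to a point of $F$), by iterating the successor and shorter-limit constructions along each block $[\gamma_n,\gamma_{n+1})$; if some $\pi_{\alpha,\gamma}[A]$ falls into $\bigotimes_{\beta<\gamma}\I_\beta$ then so does $A$ and we stop. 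The delicate point is the passage to the limit: the inverse limit $p$ of the $p_{\gamma_n}(D_n)$ must still be a condition in $*_{\beta<\alpha}\P_{\I_\beta}$ --- this is exactly where iterability enters, as a countable support iteration of proper forcings is proper --- and, crucially, $p$ must correspond via Theorem~\ref{csiteration} to a genuine $\bar\I$-positive $\Bor$ cube $D$ at the limit level rather than to a merely analytic positive set; here the limit-coherence clause in the definition of a cube is what makes $D$ Borel and positive. Finally the $g_n$ amalgamate to a continuous $g$ on $D$ with $(x,g(x))\in F$ for every $x\in D$, so $D\subseteq\pi[F]=A$, completing the induction. The two things to watch throughout are that the inverse-limit condition really yields a Borel cube rather than just an analytic positive set, and that the uniformising maps --- not merely the projections of $A$ --- are threaded through every stage, since otherwise the analyticity of $A$ breaks the limit step.
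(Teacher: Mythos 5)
The paper does not supply a proof of this theorem: it quotes Theorem~5.1.9 of Zapletal's \emph{Forcing Idealized}, remarking only that the single-ideal argument there goes through unchanged for a sequence $\bar\I$. There is therefore no in-paper argument to measure your proposal against; it has to stand or fall on its own.

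Your architecture --- induction on $\alpha$ via the iteration picture of Theorem~\ref{csiteration}, strengthening the induction statement to carry a continuous map picking, above each point of the cube, a witness in a fixed closed set $F$ with $\pi[F]=A$ --- is the right one, and the observation that you must thread the uniformiser rather than rely on the projections of $A$ is exactly the point at limits. The base and successor steps are sound as outlined; the equivalence $A\in\bigotimes_{\gamma<\beta+1}\I_\gamma\Leftrightarrow A'\in\bigotimes_{\gamma<\beta}\I_\gamma$ is correct, and passing to the $*_{\gamma<\beta}\P_{\I_\gamma}$-extension is licensed by iterability. But the limit step, where the content of the theorem sits, is invoked rather than carried out, and the two worries you list at the end are not peripheral checks: they are the lemma to be proved. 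Concretely: (a) the inverse limit $p$ of $p_{\gamma_n}(D_n)$ is indeed a condition, and by density in Theorem~\ref{csiteration} there is \emph{some} Borel cube $D$ with $p_\alpha(D)\leq p$; but this $D$ is not the inverse limit of the $D_n$'s, so the $g_n$'s do not restrict or glue to a function on $D$ by "passage to the limit" --- one must run a genuine fusion across all coordinates simultaneously, producing the cube $D$ \emph{and} a total continuous reading of the witness name in one construction (continuous reading of names for the countable-support iteration), and that lemma is not established here. (b) Even setting this aside, the induction hypothesis as stated is about a ground-model analytic set, while the block argument needs it for the name $A_{\dot g_{\gamma_n}}$ over a generic; making that precise requires reformulating the hypothesis for names in intermediate models, which changes the statement being inducted on. Also, the object "$g_n$ uniformising $F$ above $D_n$" with $D_n\subseteq\pi_{\alpha,\gamma_n}[A]$ is not a map to $\baire$ but a map into $\prod_{\gamma_n\le\beta<\alpha}X_\beta\times\baire$, and it is not clear in your writeup what coherence of the $g_n$'s should even mean once $D_{n+1}$ fills in coordinates $g_n$ was already choosing. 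So the strategy is plausible and essentially the right one, but the limit-stage fusion is a genuine missing ingredient, not a detail.
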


In the proof of Theorem \ref{KanZpl}, Kanovei and Zapletal
generalized Lemma \ref{incoanaonana} in the following way.

\begin{theorem}[{Kanovei, Zapletal, \cite[proof of Theorem
    5.1.9]{Zpl:FI}}]\label{ialphacoanaonana}
  Let $\alpha$ be a countable ordinal and
  $\bar\I=\langle\I_\beta:\beta<\alpha\rangle$ be a sequence
  of $\coana$ on $\ana$ $\sigma$-ideals on Polish spaces
  $X_\beta$, respectively. If $A\subseteq
  \prod_{\beta<\alpha} X_\beta$, is $\ana$ and
  $A\in\bigotimes_{\beta<\alpha}\I_\beta$, then there is an
  $\bar\I$-full $\ana$ cube $D$ disjoint from $A$.
\end{theorem}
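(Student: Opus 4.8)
\textbf{Proof proposal for Theorem \ref{ialphacoanaonana}.}

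The plan is to proceed by induction on the countable ordinal $\alpha$, mirroring the recursive definition of $\bar\I$-full cubes. The base case $\alpha=1$ is trivial: if $A\subseteq X_0$ is $\ana$ and $A\in\I_0$, then by definition $A\in\I_0$ and we take $D=\emptyset$ (or rather, the statement is vacuous at $\alpha=1$ since an $\bar\I$-full cube is just an $\I_0$-full set, and $X_0\setminus A$ works because $A\in\I_0$). The successor step is the content of Lemma \ref{i2coanaonana}: given $A\subseteq\prod_{\beta<\alpha+1}X_\beta=\big(\prod_{\beta<\alpha}X_\beta\big)\times X_\alpha\in\bigotimes_{\beta\leq\alpha}\I_\beta$, we unfold the Fubini product as $\big(\bigotimes_{\beta<\alpha}\I_\beta\big)\otimes\I_\alpha$, apply Lemma \ref{i2coanaonana} to the product ideal on $\prod_{\beta<\alpha}X_\beta$ (which is $\coana$ on $\ana$ by Lemma \ref{incoanaonana}, or rather by the version of it we establish inductively together with the cube extraction) to obtain an analytic $D'$ disjoint from $A$ with the Fubini fullness property, and then apply the inductive hypothesis to the analytic sections to thin $D'$ to a genuine $\bar\I$-full cube.

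The limit step is the main obstacle. When $\alpha$ is a limit ordinal, fix an increasing cofinal sequence $\langle\alpha_n:n<\omega\rangle$ with supremum $\alpha$. Given $A\in\bigotimes_{\beta<\alpha}\I_\beta$, one wants to build an $\bar\I$-full cube $D$ disjoint from $A$ by approximating through the finite stages $\alpha_n$. The difficulty is that the limit-level coherence condition (ii) in the definition of $\bar\I$-positive cube --- namely $x\in\pi_{\alpha,\beta}[D]$ iff $x\har\gamma\in\pi_{\alpha,\gamma}[D]$ for all $\gamma<\beta$ --- must be arranged \emph{simultaneously} with fullness at all successor coordinates, and the projections $\pi_{\alpha,\alpha_n}[D]$ must themselves be analytic full cubes. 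The standard device here (as in the Kanovei--Zapletal argument for Fubini powers) is a fusion-style construction: one builds, by induction on $n$, analytic full cubes $D_n\subseteq\prod_{\beta<\alpha_n}X_\beta$ that cohere ($\pi_{\alpha_{n+1},\alpha_n}[D_{n+1}]\subseteq D_n$), are disjoint from the appropriate projections of $A$ as far as possible, and whose inverse limit $D=\{x\in\prod_{\beta<\alpha}X_\beta:\forall n\ x\har\alpha_n\in D_n\}$ is both analytic ($\ana$ is closed under the relevant countable intersections) and $\bar\I$-full. To see $D$ is disjoint from $A$: if $x\in A$ then at some finite stage $n$ the initial segment $x\har\alpha_n$ already witnesses that $x\notin\bigotimes_{\beta<\alpha}\I_\beta$-full-cube membership, contradicting $x\har\alpha_n\in D_n$; more carefully, one uses that $A\in\bigotimes_{\beta<\alpha}\I_\beta$ means there \emph{exists} a full cube disjoint from $A$, and one arranges $D_n$ to refine the projections of that witness.

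The verification that $D$ so constructed is genuinely $\bar\I$-full --- i.e. that at \emph{every} successor coordinate $\beta+1<\alpha$ (not just the $\alpha_n$'s) the section $(\pi_{\alpha,\beta+1}[D])_x$ is $\I_{\beta+1}$-full --- is where one must be careful: fullness at coordinates strictly between consecutive $\alpha_n$'s is inherited from the fact that each $D_n$ is a full cube over $\prod_{\beta<\alpha_n}X_\beta$ and the coherence $\pi_{\alpha_{n+1},\alpha_n}[D_{n+1}]\subseteq D_n$ is compatible with this (one should in fact demand equality on the relevant fibers, or build $D_{n+1}$ as a full-cube extension of $D_n$). I expect the bookkeeping of these fiber-wise fullness conditions across the limit --- ensuring the analytic cubes at finite stages can always be extended to the next stage while preserving fullness and disjointness from $A$ --- to be the technically delicate heart of the argument; the $\coana$ on $\ana$ hypothesis enters precisely to guarantee (via First Reflection and Lemma \ref{i2coanaonana}) that at each successor step the required analytic complement set $D'$ actually exists.
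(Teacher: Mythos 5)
First, a clarification: the paper does not prove this statement. It is quoted verbatim from Zapletal's book (cited as the proof of Theorem 5.1.9 of \emph{Forcing Idealized}), so there is no in-paper argument to compare against. What follows is therefore an assessment of your proposal on its own terms.

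The base case is fine (the complement of $\bigcup(\lana\cap\I_0)$ is $\lana$ by First Reflection). The successor step is salvageable but your description has the order of operations inverted. Lemma \ref{i2coanaonana} as stated produces a set $D'$ on a \emph{two}-fold product; there is nothing to which one can then ``apply the inductive hypothesis to the analytic sections'' of $D'$. The correct sequencing is: form $A'=\{x\in\prod_{\beta<\alpha}X_\beta : A_x\notin\I_\alpha\}$, which is $\ana$ (since $\I_\alpha$ is $\coana$ on $\ana$) and lies in $\bigotimes_{\beta<\alpha}\I_\beta$ (project a witnessing full cube for $A$); apply the inductive hypothesis to $A'$ to get an $\ana$ full cube $D'$ disjoint from $A'$; then use only the $U^2$ half of the Lemma \ref{i2coanaonana} construction, which needs only $\I_\alpha$ to be $\coana$ on $\ana$, to glue $\I_\alpha$-full analytic fibers onto $D'$ avoiding $A$. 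This sidesteps your worry about whether $\bigotimes_{\beta<\alpha}\I_\beta$ is known to be $\coana$ on $\ana$ at this stage.

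The limit step, however, has a genuine gap, and it is precisely where your proposal stops being a proof and starts being a description of the difficulty. You propose to build analytic full cubes $D_n\subseteq\prod_{\beta<\alpha_n}X_\beta$ and say that ``one arranges $D_n$ to refine the projections of that witness'' (the full cube $E$ disjoint from $A$). But $E$ need not be analytic, so neither are its projections $E_n=\pi_{\alpha,\alpha_n}[E]$, and you have no mechanism to produce an \emph{analytic} full cube sitting inside the non-analytic set $E_n$. More fundamentally, disjointness from $A$ cannot be arranged at any finite stage of the fusion: if, say, $A$ is a singleton $\{a\}$ that happens to lie in the ideal, there is no reason for any $D_n$ to omit $a\har\alpha_n$, yet if every $D_n$ contains $a\har\alpha_n$ the inverse limit $D$ will contain $a$. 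So the ``finitary fusion, then take the inverse limit'' strategy cannot by itself deliver $D\cap A=\emptyset$; the avoidance of $A$ has to be encoded globally. The actual Kanovei--Zapletal argument does not run an $\omega$-length fusion along a cofinal sequence at all; it runs a transfinite recursion of length $\alpha$, defining at every $\beta\leq\alpha$ a coanalytic ``bad set'' $U_\beta$ (and $D_\beta$ as its complement), with the coanalyticity of $U_\beta$ maintained uniformly in a recursive code for $\beta$ via the First Reflection Theorem and the effective theory of $\lcoana$ sets parameterized by ordinal codes. The fact that $A\subseteq U_\alpha$ --- hence $D_\alpha\cap A=\emptyset$ --- then follows because $U_\alpha$ is built as the union of all $\lana$ sets in the Fubini product ideal, and $A$ is one of them. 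Your write-up identifies the right danger zone (``I expect the bookkeeping \dots to be the technically delicate heart of the argument'') but does not supply the idea --- uniform coanalyticity via ordinal codes --- that actually resolves it, and the fusion scaffolding you do propose does not converge to a proof.
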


The following (unpublished) corollary was communicated to me
by Pawlikowski.

\begin{corollary}[Pawlikowski, \cite{Janusz}]\label{Janusz}
  If $X$ is a Polish space and $\alpha$ is a countable
  ordinal, then $\M(X)^\alpha\cap\ana(X^\alpha)\subseteq\M(X^\alpha)$.
\end{corollary}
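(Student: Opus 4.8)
The plan is to derive the statement from Theorem~\ref{ialphacoanaonana} together with a transfinite induction based on the Kuratowski--Ulam theorem. Fix $A\in\M(X)^\alpha\cap\ana(X^\alpha)$ and let $\bar\I=\langle\I_\beta:\beta<\alpha\rangle$ with $\I_\beta=\M(X)$ for every $\beta$. Each $\I_\beta$ is $\coana$ on $\ana$: $\M(X)$ is the $\sigma$-ideal generated by the hereditary coanalytic family of closed nowhere dense sets, so Corollary~\ref{coanaonana} applies. Since $A$ is $\ana$ and $A\in\bigotimes_{\beta<\alpha}\I_\beta$, Theorem~\ref{ialphacoanaonana} yields a $\bar\I$-full $\ana$ cube $D\subseteq X^\alpha$ with $D\cap A=\emptyset$. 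As $A\subseteq X^\alpha\setminus D$, it suffices to prove that $D$ is comeager in $X^\alpha$; I would prove more generally, by induction on $\alpha$, that every $\bar\I$-full $\ana$ cube $D\subseteq X^\alpha$ is comeager. Throughout I use that such a cube is saturated at every limit level, i.e.\ $\pi_{\alpha,\beta}[D]=\{x:\forall\gamma<\beta\ x\har\gamma\in\pi_{\alpha,\gamma}[D]\}$ for each limit $\beta\le\alpha$, and that each projection $D_\gamma:=\pi_{\alpha,\gamma}[D]$ ($\gamma\le\alpha$) is itself a $\bar\I$-full $\ana$ cube in $X^\gamma$ (its projections are the analytic sets $\pi_{\alpha,\delta}[D]$, $\delta\le\gamma$, and all section and saturation conditions pass down via $\pi_{\gamma,\delta+1}\circ\pi_{\alpha,\gamma}=\pi_{\alpha,\delta+1}$).

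For $\alpha=1$ a $\bar\I$-full cube is an $\M(X)$-full, hence comeager, subset of $X$. If $\alpha=\gamma+1$, then by the inductive hypothesis $D_\gamma$ is comeager in $X^\gamma$, and the cube condition gives that $D_x$ is comeager in $X$ for each $x\in D_\gamma$; since $D$ is $\ana$ it has the Baire property, so Kuratowski--Ulam applied to $X^{\gamma+1}=X^\gamma\times X$ gives that $D$ is comeager, as $\{x\in X^\gamma:D_x\text{ is comeager}\}\supseteq D_\gamma$ is comeager. If $\alpha$ is a limit, then for every $\gamma<\alpha$ the cube $D_\gamma$ is comeager in $X^\gamma$ by the inductive hypothesis, hence $\pi_{\alpha,\gamma}^{-1}[D_\gamma]$ is comeager in $X^\alpha$ (the projection $\pi_{\alpha,\gamma}\colon X^\alpha\to X^\gamma$ being continuous, open and surjective); by saturation at the top level, $D=\bigcap_{\gamma<\alpha}\pi_{\alpha,\gamma}^{-1}[D_\gamma]$, a countable intersection of comeager sets, so $D$ is comeager.

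The main obstacle is the limit step, where the saturation of $D$ at the top level is essential: without it the claim is false, and so is the corollary --- for instance in $X^\omega$ the set $\{x:x_n\to p\}$ meets every section requirement of an $\bar\I$-full Borel cube yet is meager, and its (comeager) complement would then belong to $\M(X)^\omega$. Hence the care needed is, first, to record that the cube produced by Theorem~\ref{ialphacoanaonana} really is saturated at every limit level $\beta\le\alpha$, and second, to check at each stage that the projections $\pi_{\alpha,\gamma}[D]$ remain $\ana$ cubes that are $\bar\I$-full, so that the inductive hypothesis applies; granting this, the successor step is a routine application of Kuratowski--Ulam once one uses the Baire property of $D$.
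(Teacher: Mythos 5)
Your overall strategy is the natural one and is correct in spirit: obtain an $\bar\I$-full $\ana$ cube $D$ disjoint from $A$ via Theorem~\ref{ialphacoanaonana}, verify that the projections $\pi_{\alpha,\gamma}[D]$ are themselves full analytic cubes, and then run a Kuratowski--Ulam induction to show the complement of $A$ is comeager. This also matches the machinery the paper actually deploys in the proof of Theorem~\ref{iterationgdelta1}, where a cube $G=\bigcap_{n<\omega}\pi_{\omega,n}^{-1}[G_n]$ is built and repeated Kuratowski--Ulam arguments establish comeagerness of projections. (The paper itself gives no proof of the Corollary, merely attributing it to unpublished notes of Pawlikowski, so there is no argument in the text to compare against beyond this.)

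There is, however, a genuine unresolved gap, and you have put your finger exactly on where it is but not closed it. The inductive statement you actually prove is that \emph{saturated} full $\ana$ cubes are comeager, and at the end you need the specific $D$ produced by Theorem~\ref{ialphacoanaonana} to be saturated at the top limit level $\alpha$. Neither the definition of an $\bar\I$-full cube given in Section~\ref{sec:productsanditerations} (condition~(ii) ranges only over limit $\beta<\alpha$, so it is vacuous at $\beta=\alpha$) nor the statement of Theorem~\ref{ialphacoanaonana} guarantees this. Your example $D=\{x\in X^\omega: x_n\to p\}$ is a correct and telling illustration: with the definition as literally stated, this is a full Borel cube (all finite projections equal $X^n$, all sections are full, and condition~(ii) is vacuous at $\alpha=\omega$), its comeager complement then lies in $\M(X)^\omega\cap\ana(X^\omega)$, and yet that complement is not meager in $X^\omega$. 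So not only your inductive claim but the Corollary itself would be false if the cube definition were taken at face value. The conclusion is that the definition must be read as requiring, at limit $\alpha$, that $D=\{x\in X^\alpha:\forall\gamma<\alpha\ x\har\gamma\in\pi_{\alpha,\gamma}[D]\}$ (as happens by construction for the cube $G$ in the proof of Theorem~\ref{iterationgdelta1}, and as is implicit in the game formulation \cite[Definition 5.1.1]{Zpl:FI} to which the paper appeals). To make your proof complete you must either (a) adopt this strengthened cube definition and check that Theorem~\ref{ialphacoanaonana} still produces such a cube, or (b) verify directly from the Kanovei--Zapletal proof that their cube is saturated at the top. You flag this as ``care needed'' but do not supply the verification; as written the proof has a hole precisely at the step that your own counterexample shows cannot be waved away.

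One small further remark: the base case and the observation that $\M(X)$ is $\coana$ on $\ana$ (via the Borel, a fortiori coanalytic, hereditary family of closed nowhere dense sets, using Corollary~\ref{coanaonana}) are fine, and the reduction of the successor step to Kuratowski--Ulam is correct once one notes that $D$, being analytic, has the Baire property. Your observation that for $\gamma<\alpha$ a limit the projection $D_\gamma$ \emph{is} automatically saturated at its own top (since condition~(ii) of the cube definition applies at $\beta=\gamma<\alpha$) is exactly what makes the induction self-propagating for the projections; the problem is solely at the top level $\alpha$.
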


Theorem \ref{iterationgdelta} was motivated by Theorems
\ref{Sol.dich}, \ref{KanZpl} and Corollary \ref{Janusz}.
Now we restate it, in a slightly stronger version.

\begin{theorem}\label{iterationgdelta1}
  Let $\langle X_n: n<\omega\rangle$ be a sequence of Polish
  spaces and $\bar \I=\langle \I_n:n<\omega\rangle$ be a
  sequence of $\coana$ on $\ana$ $\sigma$-ideals generated
  by closed sets, $\I_n$ on $X_n$, respectively. If
  $A\subseteq\prod_{n<\omega} X_n$ is $\ana$, then
  \begin{itemize}
  \item either $A\in\bigotimes_{n<\omega}\I_n$,
  \item or else $A$ contains an $\bar\I$-positive $\gdelta$
    cube $G$ such that
  $$(\ana(\prod_{n<\omega} X_n)\cap \bigotimes_{n<\omega}\I_n)\har G\subseteq\M(G).$$
\end{itemize}
\end{theorem}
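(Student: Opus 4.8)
The argument is an induction that simultaneously proves the dichotomy and the extra $\M$-category property, using the parametrization of $\gdelta$ sets from Section~\ref{sec:coanaidclosed} together with Theorem~\ref{csiteration}. First I would handle the case $A\in\bigotimes_{n<\omega}\I_n$, which is immediate from the definition of Fubini product. So assume $A\notin\bigotimes_{n<\omega}\I_n$. The idea is to repeat the construction in Remark~\ref{i2}, but across all coordinates: having built the cube up to level $n$ on the first $n$ coordinates, I pass a $\P_{\I_0}*\cdots*\P_{\I_{n-1}}$-name $\dot y$ for a witness in a closed set $D$ projecting onto $A$, use $\shoenfield$-absoluteness to push it into the iteration, and then (by Theorem~\ref{csiteration}, which identifies the iteration with the poset of $\bar\I$-positive Borel cubes, together with continuous reading of names and properness from Theorem~\ref{genclosed} for each factor) extract a continuous function on an $\I_n$-perfect $\gdelta$ set giving the $(n+1)$-st slice. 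Lemma~\ref{iperfectgdelta}(i) guarantees the family of $\I_n$-perfect $\gdelta$ sets is $\ana$, so the resulting formula "there is a $\gdelta$ cube below $A$" is $\ana$ and $\shoenfield$-absoluteness lets me iterate the construction coordinate by coordinate. This produces an $\bar\I$-positive $\gdelta$ cube $G\subseteq A$.

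For the second, stronger clause — that $(\ana(\prod_n X_n)\cap\bigotimes_n\I_n)\har G\subseteq\M(G)$ — I would incorporate Corollary~\ref{Janusz} and Theorem~\ref{ialphacoanaonana} into the same induction. The point of Pawlikowski's corollary, $\M(X)^\alpha\cap\ana\subseteq\M(X^\alpha)$, is that a Fubini product of meager ideals behaves well with respect to genuine category on the product space. What I want is: for each $n$, the $\I_n$-perfect $\gdelta$ factor $G_n$ (relativized appropriately) should be chosen so that $(\ana\cap\I_n)\har G_n\subseteq\M(G_n)$ — this is exactly the content of the Solecki-type strengthening one gets from Lemma~\ref{iperfect} (an $\I_n$-positive $\gdelta$ set can be shrunk to one whose closure is $\I_n$-perfect, and on such a set every $\I_n$-set, being covered by countably many closed $\I_n$-sets, is nowhere dense, hence meager). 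Since $G$ is the "product" of these factors (formally, an $\bar\I$-positive cube whose slices are such $G_n$'s), and since any $\ana$ set in $\bigotimes_n\I_n$ is disjoint from an $\bar\I$-full $\ana$ cube by Theorem~\ref{ialphacoanaonana}, I would argue that intersecting with $G$ lands the set inside a Fubini product of meager ideals relative to the $G_n$'s, and then apply Corollary~\ref{Janusz} to conclude it is meager in $G$.

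More concretely, the key observation to carry through the induction is: if $B\in\ana(\prod_n X_n)\cap\bigotimes_n\I_n$, then by Theorem~\ref{ialphacoanaonana} there is an $\bar\I$-full $\ana$ cube $D$ with $B\cap D=\emptyset$; the complement of a full cube, restricted to any $\bar\I$-positive cube $G$, is "small" in each direction in the sense that for each partial point $x$ the slice misses an $\I_n$-full subset of the $n$-th slice of $G$. Choosing the slices $G_n$ of $G$ so that $\I_n$-sets are meager in $G_n$ (via the $\I_n$-perfect closure trick and Lemma~\ref{iperfect}), the section $B_x\cap(G)_x$ is meager in $(G)_x$ for $\bigotimes$-almost every $x$, hence by Kuratowski–Ulam / the definition of the Fubini product of meager ideals, $B\cap G\in\M(G)^\omega$; restricting to the $\ana$ case and invoking Corollary~\ref{Janusz} gives $B\cap G\in\M(G)$.

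\textbf{The main obstacle.} The delicate point is making the two inductions — the one building the $\gdelta$ cube via names in the iteration, and the one controlling category of $\bigotimes_n\I_n$-small sets on it — compatible, i.e. choosing the $\gdelta$ slices $G_n$ simultaneously $\I_n$-perfect (so that the witness-reading works and Lemma~\ref{iperfect} applies) \emph{and} definably enough (so that the "$\exists$ $\gdelta$ cube" formula stays $\ana$ and $\shoenfield$-absoluteness propagates the construction to the next coordinate). Verifying that the category computation on $G$ really reduces, via Theorem~\ref{ialphacoanaonana} and the structure of $\bar\I$-full cubes, to a Fubini product of meager ideals to which Corollary~\ref{Janusz} applies — rather than just to the abstract $\bigotimes_n(\M(G_n))$ — is where the bookkeeping is heaviest, and is the step I expect to require the most care.
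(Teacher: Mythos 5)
Your high-level plan for building the $\gdelta$ cube --- coordinate by coordinate, using $\shoenfield$-absoluteness, continuous reading of names for each $\P_{\I_n}$, and the $\ana$ complexity of the family of $\I_n$-perfect $\gdelta$ codes from Lemma~\ref{iperfectgdelta} --- matches the paper's. The place where your plan diverges, and where there is a real gap, is the second clause about $\M(G)$. You propose to finish by applying Corollary~\ref{Janusz} (that $\M(X)^\alpha\cap\ana(X^\alpha)\subseteq\M(X^\alpha)$) to $G$, after arguing that an $\ana$ set in $\bigotimes_n\I_n$ meets $G$ in a set lying in a Fubini product of the meager ideals on the slices $G_n$. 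This does not go through: Corollary~\ref{Janusz} concerns genuine powers $X^\alpha$ of a single Polish space, equipped with the Fubini power of the one meager ideal on $X$. The $\bar\I$-positive $\gdelta$ cube $G$ is not a power, and not even a product --- the slice $(G_{n+1})_x$ depends on $x$, so $G$ is only a $\gdelta$ set fibered over $G_0\leftarrow G_1\leftarrow\cdots$ by the projections --- and there is no a priori identification of $\M(G)$ with any ``Fubini product of the $\M(G_n)$'s.'' In the paper, Corollary~\ref{Janusz} is cited only as motivation; the actual device is a Kuratowski--Ulam lemma for \emph{continuous open surjections} (Lemma~\ref{KurUlam}), applied iteratively along $\pi_{n+1}\har G_{n+1}:G_{n+1}\to G_n'$, which is exactly the tool adapted to a fibered space like $G$, and whose proof is not entirely routine.

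Several further ingredients needed to run that open-map Kuratowski--Ulam argument are absent from your outline: the Jankov--von~Neumann uniformization yielding a Baire-measurable selector $g:G_n\to\G_{n+1}$ of $\gdelta$-codes; passing to a dense $\gdelta$ set $G_n'$ on which $g$ is continuous, which together with Lemma~\ref{gdeltaeffros} makes $\pi_{n+1}\har G_{n+1}$ an \emph{open} map (a hypothesis of Lemma~\ref{KurUlam}); the inductive invariant that every $\bar\I_n$-full $\ana$ cube meets $G_n$ in a comeager set, which is what the category clause is ultimately reduced to; and the verification, via the auxiliary sets $H_n^k$ and repeated Kuratowski--Ulam, that the limit $G=\bigcap_n\pi_{\omega,n}^{-1}[G_n]$ is in fact $\bar\I$-positive --- a countable intersection of $\bar\I$-positive cubes need not be positive, so this is not free. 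You correctly flag the category bookkeeping as the hard part; but the missing idea is the open-map Kuratowski--Ulam lemma, and once you have it, Corollary~\ref{Janusz} drops out of the argument entirely.
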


\begin{proof}
  Suppose $A\subseteq \prod_{n<\omega}X_n$ is an analytic
  $\bigotimes_{n<\omega}\I_n$-positive set. By Theorem
  \ref{KanZpl} we may assume that $A$ is an
  $\bar\I$-positive $\Bor$ cube. For each $n<\omega$ write
  $A_n$ for $\pi_{\omega,n}[A]$ and let $E_n\subseteq
  \prod_{i<n}X_i\times\baire$ be a closed set projecting to
  $A_n$. Let $\G_n\subseteq\baire$ be the analytic set from
  Lemma \ref{iperfectgdelta} consisting of codes of all
  $\I_n$-perfect $\gdelta$ sets. In this proof we denote
  $\pi_{n,n-1}$ by $\pi_n$ and write $\bar\I_n$ for
  $\langle\I_i:i<n\rangle$.

  \medskip 

  We will use the following lemma.
  \begin{lemma}[Kuratowski, Ulam]\label{KurUlam}
    Let $X$ and $Y$ be Polish spaces and let $f:Y\rightarrow
    X$ be a continuous open surjection. Suppose $B\subseteq
    Y$ has the Baire property and $$\forall^\M x\in X\quad
    B\cap f^{-1}[\{x\}]\mbox{ is meager in }
    f^{-1}[\{x\}].$$ Then $B$ is meager in $Y$.
  \end{lemma}

  \begin{proof}
    The proof is almost the same as the proof of the
    ``product'' version of the Kuratowski-Ulam theorem
    \cite[Theorem 8.41]{Kechris}. The difference is that
    instead of \cite[Lemma 8.42]{Kechris}, we need to prove
    that if $U\subseteq Y$ is open dense, then
    \begin{displaymath}
      \forall^{\M}x\in X\quad U\cap f^{-1}[\{x\}]\mbox{ is
        open dense in }f^{-1}[\{x\}].\tag{$*$}
    \end{displaymath}
    To show this, we take the open basis $\langle U_n:
    n<\omega\rangle$ of $Y$ and we show that for each
    $n<\omega$ the set $$V_n=\{x\in X:\ \ f^{-1}[\{x\}]\cap
    U_n=\emptyset\ \vee\ f^{-1}[\{x\}]\cap U_n\cap
    U\not=\emptyset\}$$ contains an open dense set. Indeed,
    let $W_n=X\setminus\cl{f[U_n]}$ and notice that the set
    $f[U_n]\cup W_n$ is open dense in $X$. Moreover,
    $W_n\subseteq V_n$ and $V_n\cap f[U_n]$ is dense open in
    $f[U_n]$. Now, notice that if
    $x\in\bigcap_{n<\omega}V_n$, then $U\cap f^{-1}[\{x\}]$
    is open dense in $f^{-1}[\{x\}]$. This proves $(*)$.
  \end{proof}

  \medskip

  We shall construct a sequence of $\gdelta$ sets
  $G_n\subseteq\prod_{i<n}X_i$ such that
  \begin{itemize}
  \item[(i)] $\pi_n[G_n]\subseteq G_{n-1}$ is comeager in
    $G_{n-1}$,
  \item[(ii)] for each $x\in\pi_n[G_n]$ the set $(G_n)_x$ is
    $\I_n$-perfect,
  \item[(iii)] $\pi_n\har G_n:G_n\rightarrow \pi_n[G_n]$ is
    an open map,
  \item[(iv)] if $D\subseteq \prod_{i<n}X_i$ is an
    $\bar\I_n$-full $\ana$ cube, then $D\cap G_n$ is
    comeager in $G_n$
  \end{itemize}

  Note that, by Lemma \ref{incoanaonana}, (iv) implies
  \begin{itemize}
  \item[(v)] $(\ana(\prod_{i<n}X_i)\cap
    \bigotimes_{i<n}\I_i)\har G_n\subseteq\M(G_n)$.
  \end{itemize}

 \medskip

 For $n=0$ use Lemma \ref{iperfectdense} to find an
 $\I_0$-perfect $\gdelta$-set $G_0\subseteq A_0$. Notice
 that $(\ana(X_0)\cap\I_0)\har G_0\subseteq \M(G_0)$ follows
 from the fact that $G_0$ is $\I_0$-perfect.

  \medskip

  Suppose the set $G_n\subseteq X^n$ is constructed.
  Similarly as in Remark \ref{i2} we conclude that by Lemma
  \ref{iperfectdense}, $\shoenfield$-absoluteness and
  continuous reading of names for $\P_\I$, for each $x\in
  G_n$ there is a code $c(x)\in\G_{n+1}$ for an
  $\I_{n+1}$-perfect $\gdelta$ set $\tilde G_{c(x)}$ and a
  function $f:\tilde G_{c(x)}\xrightarrow{*} (E_{n+1})_x$.
  Consider the set
  \begin{displaymath}
    W=\{(x,d)\in X^n\times\baire:\ \ x\in G_n,\ 
    \exists c\in\G_{n+1}\, \exists f:\tilde
    G_c\xrightarrow{*} (E_{n+1})_x\ \ \dom(f)=\tilde
    G_d\}.
  \end{displaymath} 
  $W$ is analytic and all vertical sections of $W$ are
  nonempty. Hence, by the Jankov-von Neumann theorem, $W$
  has a $\sigma(\ana)$-measurable uniformization
  $g:G_n\rightarrow\G_{n+1}$. In particular, $g$ is Baire
  measurable and hence it is continuous on a dense $\gdelta$
  set $G_n'\subseteq G_n$. Let
  $$G_{n+1}=\{(x,y)\in X^n\times X: x\in
  G_n'\,\wedge\,y\in\tilde G_{g(x)}\}.$$ $G_{n+1}$ is a
  $\gdelta$ set since $G_{n+1}=(g,id)^{-1}[\tilde G]$.
  Moreover, $\pi_{n+1}[G_{n+1}]=G_n'$ is comeager in $G_n$.

  Note that the function $\pi_{n+1}\har G_{n+1}:
  G_{n+1}\rightarrow G_n'$ is open by Lemma
  \ref{gdeltaeffros} and the fact that $g$ is continuous on
  $G_n'$.

  Now, let $D\subseteq \prod_{i<n+1}X_i$ be an
  $\bar\I_{n+1}$-full $\ana$ cube. The set
  $D_n=\pi_{n+1}[D]$ is an $\bar\I_n$-full $\ana$ cube, so,
  by the inductive hypothesis, $D_n\cap G_n$ is comeager in
  $G_n$.  Therefore, $D_n\cap G_n'$ is comeager in $G_n'$.
  Moreover, if $x\in G_n'\cap D_n$, then
  $D_x\cap(G_{n+1})_x$ is comeager in $(G_{n+1})_x$, since
  $(G_{n+1})_x$ is $\I_{n+1}$-perfect. Now, $D$ has the
  Baire property, so by Lemma \ref{KurUlam} (for the
  function $\pi_{n+1}\har G_{n+1}: G_{n+1}\rightarrow G_n'$)
  we have that $D\cap G_{n+1}$ is comeager in $G_{n+1}$.

  This ends the construction.

  \medskip

  Put
  $$G=\bigcap_{n<\omega} \pi_{\omega,n}^{-1}[G_n].$$ $G$ is
  a $\gdelta$ set and it is contained in $A$ since $A$ is a
  ($\bar\I$-positive) cube. For each $n,k<\omega$ consider
  also the set
  \begin{eqnarray*}
    H_n^k=\{x_n\in G_n:\ \forall^\M y_{n+1}\in
    (G_{n+1})_{x_n}\ \ldots\ \forall^\M y_{n+k}\in
    (G_{n+k})_{(x_n,y_{n+1},\ldots,y_{n+k-1})}\\
    \exists y_{n+k+1}\in
    (G_{n+k+1})_{(x_n,y_{n+1},\ldots,y_{n+k})}
    \quad(x_n,y_{n+1},\ldots y_{n+k+1})\in G_{n+k}\}
  \end{eqnarray*}

  Applying $(k+1)$-many times Lemma \ref{KurUlam} we
  conclude that $H_n^k$ is comeager in $G_n$ for each
  $k<\omega$. Put $$H_n=\bigcap_{k<\omega} H_n^k.$$ Each
  $H_n$ is also a comeager subset of $G_n$. Notice that
  $\pi_{n+1}[H_{n+1}]\subseteq H_n$ and $\pi_{n+1}[H_{n+1}]$
  is comeager in $G_n$ for each $n<\omega$. Moreover, for
  each $n<m<\omega$
  \begin{displaymath}
    \pi_{m,n}[H_m]\mbox{ is comeager in }G_n\tag{$*$}    
  \end{displaymath}
  (by repeatedly applying Lemma \ref{KurUlam}). 

  Notice that for each $n<\omega$ we have
  $$\bigcap_{n<m<\omega}\pi_{m,n}[H_m]\subseteq
  \pi_{\omega,n}[G].$$ Consequently, by $(*)$ we have that
  $\pi_{\omega,n}[G]$ is comeager in $G_n$. Therefore, it is
  $\bigotimes_{i<n}\I_i$-positive, by (v). For each
  $k<\omega$ and $x\in\pi_{\omega,k}[G]$ we may repeat the
  above argument in the space $(\prod_{n<\omega}X_n)_x$ and
  conclude that the set $(\pi_{\omega,k+1}[G])_x$ is
  comeager in $(G_{k+1})_x$ and hence is $\I_{k+1}$-positive
  (since $(G_{k+1})_x$ is $\I_{k+1}$-perfect). Therefore $G$
  is $\bar\I$-positive $\gdelta$ cube.

  \medskip

  Now we prove that $(\ana(\prod_{n<\omega} X_n)\cap
  \bigotimes_{n<\omega}\I_n)\har G\subseteq\M(G)$. By
  Theorem \ref{ialphacoanaonana} it is enough to prove that
  if $D\subseteq\prod_{n<\omega}X_n$ is $\bar\I$-full $\ana$
  cube, then $D\cap G$ is comeager in $G$. Write
  $D_n=\pi_{\omega,n}[D]$.  Using (iv) we see that $D_n$ is
  comeager in $G_n$. For each $n<\omega$ find a dense in
  $G_n$ $\gdelta$ set $G_n''\subseteq D_n$ such that
  $G_{n+1}''\subseteq\pi_n^{-1}[G_n'']$. Let
  $G''=\bigcap_{n<\omega}\pi_{\omega,n}^{-1}[G_n'']$. Note
  that $G''\subseteq D\cap G$ and $G''$ is a $\gdelta$ set.
  We will prove that $G''$ is dense in $G$.

  Repeatedly applying Theorem \ref{KurUlam} and the property
  (iv) we see that for each $n<\omega$ the following holds
  $$\forall^\M y_0\in G_0\ \forall^\M y_1\in (G_1)_{y_0}\
  \ldots \forall^\M y_n\in(G_n)_{(y_0,\ldots,y_{n-1})}\quad
  (y_0,\ldots,y_n)\in G_n''.$$ Using this we can easily show
  $G''$ is nonempty, and, in fact, that if $U_n\subseteq
  X^n$ is open, then $G''\cap\pi_{\omega,n}^{-1}[U_n]$ is
  nonempty. But this implies that $G''$ is dense in $G$.

  This ends the proof.
\end{proof}

Let $X$ and $Y$ be Polish spaces and
$F:X\rightarrow\mathcal{P}(Y)$ be a multifunction. If
$\mathcal{A}$ is a family of subsets of $X$, then we say
that $F$ is \textit{$\mathcal{A}$-measurable} if for each
open set $U\subseteq Y$ the set $F^{-1}(U)=\{x\in X:
F(x)\cap U\not=\emptyset\}$ belongs to $\mathcal{A}$. We say
that $F$ is an \textit{analytic multifunction} if its graph,
i.e $\bigcup_{x\in X}\,\{x\}\times F(x)$, is analytic in
$X\times Y$. The following result is motivated by the
Kuratowski-Ryll Nardzewski theorem.

\begin{proposition}
  Let $X$ be a Polish space and $\I$ a $\sigma$-ideal on $X$
  generated by closed sets. If
  $F:X\rightarrow\mathcal{P}(\baire)$ is an analytic
  multifunction then there is an $\I$-positive $\gdelta$ set
  $G$ such that $F\har G$ is
  $\mathbf{\Sigma}^0_3$-measurable.
\end{proposition}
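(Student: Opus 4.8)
The plan is to restrict $F$ to a positive $\gdelta$ set on which $F$ is realized by a countable family of \emph{continuous} selections whose images are dense in the values of $F$; this makes the preimage of every open set open (a fortiori $\mathbf{\Sigma}^0_3$). I would first fix a closed set $D\subseteq X\times\baire\times\baire$ whose projection to the first two coordinates is the graph of $F$, and put $B=\{x:F(x)\not=\emptyset\}$, an analytic set. Using Lemma \ref{iperfectdense}, fix an $\I$-perfect $\gdelta$ set $G_0$. If $B\cap G_0\in\I$, then, covering $B\cap G_0$ by closed sets in $\I$ and removing them from $G_0$, I get an $\I$-positive $\gdelta$ set on which $F$ is identically empty, and there is nothing to prove. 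So I may assume $B\cap G_0\notin\I$; then by the Solecki dichotomy (Theorem \ref{Sol.dich}) and Lemma \ref{iperfectdense} there is an $\I$-perfect $\gdelta$ set $G_1\subseteq B\cap G_0$ with $F(x)\not=\emptyset$ for all $x\in G_1$, so $G_1\Vdash_{\P_\I}\dot g\in B$.

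Working in the extension by $\P_\I$ below $G_1$, the set $F(\dot g)$ is a nonempty analytic, hence separable, subset of $\baire$, so I can fix a name for a sequence $\langle\dot y_n:n<\omega\rangle$ enumerating a dense subset of $F(\dot g)$, together with names $\dot z_n$ witnessing $(\dot g,\dot y_n,\dot z_n)\in D$, and bundle everything into a single name $\dot W$ for a point of $(\baire\times\baire)^\omega$. Applying properness and continuous reading of names for $\P_\I$ (Theorem \ref{genclosed}) and passing to $M$-generic reals over a suitable countable $M\prec H_\kappa$ with $\I,D,G_1,\dot W\in M$, exactly as in Remark \ref{i2}, I obtain an $\I$-positive $\gdelta$ set $G\subseteq G_1$ and continuous functions $f_n,h_n:G\to\baire$ reading $\langle(\dot y_n,\dot z_n):n<\omega\rangle$.

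The heart of the argument is to check that, for every $x\in G$, the sequence $\langle f_n(x):n<\omega\rangle$ is a dense subset of the \emph{genuine} set $F(x)$. That $(x,f_n(x),h_n(x))\in D$ (hence $f_n(x)\in F(x)$) is a closed condition, so it transfers from $M[x]$ to $V$. For density, fix $s\in\otr$: the assertion $F(x)\cap[s]\not=\emptyset$, i.e. $\exists(y,z)\ (x,y,z)\in D\wedge s\subseteq y$, is $\ana$ in $x$ and hence absolute between $M[x]$ and $V$, while the matching assertion $\exists n\ s\subseteq f_n(x)$ is open, hence also absolute; combining these with density of $\langle f_n(x)\rangle$ in $F(x)$ as computed in $M[x]$ yields density in $V$. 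This passage from ``dense in the forcing extension'' to ``dense at a genuine point'' is the step I expect to be the main obstacle, and it is precisely what the $M$-generic/$\ana$-absoluteness device from Remark \ref{i2} is designed for. A secondary technical point is to keep $\I$-positivity through the shrinkings to $\gdelta$ sets; this is handled by working throughout with $\I$-perfect $\gdelta$ sets, whose dense $\gdelta$ subsets remain $\I$-positive by the Baire category theorem (cf. Lemma \ref{iperfect}), and by coding the countably many names into one so that continuous reading is applied only once.

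Finally, for an open $U\subseteq\baire$ and $x\in G$, density of $\langle f_n(x)\rangle$ in $F(x)$ gives $F(x)\cap U\not=\emptyset$ iff $f_n(x)\in U$ for some $n$, so
$$(F\har G)^{-1}(U)=\bigcup_{n<\omega}f_n^{-1}(U)$$
is open in $G$; in particular $F\har G$ is $\mathbf{\Sigma}^0_3$-measurable (indeed lower semicontinuous), which completes the proof.
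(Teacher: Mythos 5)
Your proof is correct, but it takes a genuinely different, and in fact stronger, route than the paper's. The paper's argument is shorter: it fixes a countable $M\prec H_\kappa$ containing the relevant parameters, takes $G$ to be an $\I$-positive $\gdelta$ subset of the Borel set of $M$-generic reals, and observes that for each $\tau\in\otr$ the statement $\exists y\in[\tau]\ A(x,y)$ is $\lana$ and hence absolute between $M[x]$ and $V$; consequently $F^{-1}\big[[\tau]\big]\cap G$ equals the union, over $\gdelta$ conditions $B\in\P_\I\cap M$ with $B\Vdash\exists y\in[\tau]\ A(\dot g,y)$, of $B\cap G$. Since $M$ is countable and $\gdelta$ conditions are dense (Theorem \ref{Sol.dich} plus Lemma \ref{iperfectdense}), this is a countable union of $\gdelta$ sets, hence $\mathbf{\Sigma}^0_3$ --- no selections are needed and no case split on whether $F$ has empty values. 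What you do instead is build countably many continuous selections $f_n$ on $G$ whose values are dense in each fiber $F(x)$, which makes $(F\har G)^{-1}(U)=\bigcup_n f_n^{-1}(U)$ relatively open in $G$; this establishes lower semicontinuity of $F\har G$ on the nonempty-valued part (and trivially on the empty-valued part), which is strictly stronger than $\mathbf{\Sigma}^0_3$-measurability. The price is a longer argument (continuous reading of the bundled name $\dot W$, the absoluteness check for density, and the preliminary dichotomy on $B\cap G_0$). Your absoluteness step for density is the delicate point and you handle it correctly: $F(x)\cap[s]\not=\emptyset$ is $\ana(a,x,s)$, hence by Mostowski absoluteness holds in $M[x]$ exactly when it holds in $V$, and then density of $\langle f_n(x)\rangle$ in $F(x)^{M[x]}$ together with the arithmetic statement $\exists n\ s\subseteq f_n(x)$ transfers upward. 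So the proposal is a correct proof with a different decomposition; the paper's proof is more economical and gets exactly the stated $\mathbf{\Sigma}^0_3$ bound, whereas yours, at greater length, improves the measurability class.
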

\begin{proof}
  Denote the graph of $F$ by $A$ and let $a$ be such that
  $A\in\lana(a)$. Let $A(v,w)$ be a $\lana(a)$ formula
  defining the set $A$. Take $M\prec H_\kappa$ (for a big
  enough $\kappa$) containing $a$ and $\P_\I$. Let
  $Gen(M)\subseteq X$ be the set of all $\P_\I$-generic
  reals over $M$. $Gen(M)$ is an $\I$-positive Borel set by
  properness of $\P_\I$. Find an $\I$-positive $\gdelta$ set
  $G\subseteq Gen(M)$. We will show that $F\har G$ is
  $\mathbf{\Sigma}^0_3$-measurable.  Notice that if
  $\tau\in\otr$ and $x\in X$, then
  $$x\in F^{-1}([\tau])\quad\mbox{iff}\quad\exists y\in[\tau]\
  A(x,y).$$ This is a $\lana(a)$ formula, so it is absolute
  for $M[x]\subseteq V$. Therefore, by a usual forcing
  argument and the fact that $\I$-positive $\gdelta$ sets
  are dense in $\P_\I$ we get
  $$F^{-1}\big[[\tau]\big]=\bigcup\{G\in\P_\I\cap
  M:\quad G\in\gdelta\ \wedge\ G\Vdash\exists y\in[\tau]\
  A(\dot g,y)\}.$$ This is a $\mathbf{\Sigma}^0_3$ set.
\end{proof}

\section{Closed null sets}
\label{sec:E}

We denote by $\E$ the $\sigma$-ideal generated by closed
null sets in $\cantor$ (with respect to the standard Haar
measure $\mu$ on $\cantor$). The sets in $\E$ are both null
and meager. $\E$ is properly contained in $\M\cap\N$
\cite[Lemma 2.6.1]{BaJu} and in fact, one can show that $\E$
is not ccc.

The family of closed sets in $\E$ coincides with the family
of closed null sets and $\E\cap\closed(\cantor)$ is a
$\gdelta$ set in $K(\cantor)$ (for each $\varepsilon>0$ the
set $\{C\in K(\cantor): \mu(C)<\varepsilon\}$ is open).
Therefore, $\E$ is $\coana$ on $\ana$ by Corollary
\ref{coanaonana}.

The forcing $\P_\E$ adds an unbounded real and a splitting
real. In fact, one can check that the generic real is
splitting. To see that $\P_\E$ adds an unbounded real,
recall a theorem of Zapletal \cite[Theorem 3.3.2]{Zpl:FI},
which says that a forcing $\P_\I$ is $\baire$-bounding if and
only $\P_\I$ has continuous reading of names and compact sets
are dense in $\P_\I$. Let $G$ be a $\gdelta$ set such that
$G\in \N$ and $\cantor\setminus G\in \M$. $G$ is
$\E$-positive but no compact $\E$-positive set is contained
in $G$. In particular compact sets are not dense in $\P_\E$
and hence this forcing is not $\baire$-bounding. $\P_\E$
does not, however, add a dominating real. This follows from
another theorem of Zapletal \cite[Theorem 3.8.15]{Zpl:FI},
which says that if $\I$ is a $\coana$ on $\ana$
$\sigma$-ideal, then the forcing $\P_\I$ does not add a
dominating real.

Zapletal proved in \cite[Theorem 4.1.7]{Zpl:FI} (see the
first paragraph of the proof) that if $\I$ is a
$\sigma$-ideal generated by an analytic collection of closed
sets and $V\subseteq V[G]$ is a $\P_\I$-extension, then any
intermediate extension $V\subseteq W\subseteq V[G]$ is equal
either to $V$ or $V[G]$, or is an extension by a Cohen real.
Therefore it is natural to ask if $\P_\E$ adds Cohen reals.

Recall that a closed set $D\subseteq\cantor$ is
\textit{self-supporting} if for any clopen set $U$ the set
$D\cap U$ is either empty or not null. Notice that a closed
set is self-supporting if and only if it is $\E$-perfect.
If $\mu$ is a Borel measure on $X$ and $A\in\Bor(X)$ is such
that $\mu(A)>0$, then by $\mu_A$ we denote the
\textit{relative measure} on $A$ defined as
$\mu_A(B)=\mu(A\cap B)\slash\mu(A)$.

\begin{theorem}\label{nocohene}
  The forcing $\P_\E$ does not add Cohen reals.
\end{theorem}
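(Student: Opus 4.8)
The goal is to show that $\P_\E$ does not add Cohen reals. The plan is to work with $\E$-perfect (equivalently self-supporting) closed sets, which are dense in $\P_\E$, and to use the relative measure $\mu_D$ on such a set $D$. The key idea is that on a self-supporting closed set $D$, the relative measure $\mu_D$ is still a reasonable Borel measure, and the forcing $\P_\E$ restricted below $D$ behaves — from the point of view of measure — enough like random forcing that it cannot add a Cohen real. More precisely, I would aim to show that for every condition $D\in\P_\E$ (self-supporting closed), every $\P_\E$-name $\dot c$ for an element of $\cantor$, and every $\varepsilon>0$, there is a stronger condition $D'\leq D$ and a ground-model closed nowhere dense set $N\subseteq\cantor$ such that $D'\Vdash\dot c\in\check N$; this is precisely the statement that $\dot c$ is not forced to be Cohen-generic, and since $\dot c$ was arbitrary, $\P_\E$ adds no Cohen real.

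\textbf{Key steps.} First I would recall (or reprove) that self-supporting closed sets are dense in $\P_\E$: given any $\E$-positive Borel set $B$, first pass to an $\E$-positive $\gdelta$ subset by Theorem \ref{Sol.dich}, then to an $\E$-perfect $\gdelta$ subset by Lemma \ref{iperfectdense}, and then (using Lemma \ref{iperfectcl}(i)) its closure is $\E$-perfect, hence self-supporting; intersecting with a suitable clopen set one gets a self-supporting \emph{closed} set inside (a clopen neighborhood of) $B$. Second, the crucial measure-theoretic step: given a self-supporting closed $D$ and a name $\dot c$ for a point of $\cantor$, I would run a fusion-style construction building a decreasing sequence of self-supporting closed sets $D=D_0\supseteq D_1\supseteq\cdots$ together with finite binary strings, so that at stage $n$ the condition $D_n$ decides $\dot c\har k_n$ on a set of large relative $\mu_{D_n}$-measure while the ``exceptional'' part is cut away; the point is to arrange that the $\mu_D$-measure of the set of branches through $D$ whose decided values of $\dot c$ avoid a fixed clopen set stays bounded below, so that in the limit $\dot c$ is forced into a closed set $N$ which (because only countably many clopen sets get ``hit'') one shows to be nowhere dense. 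Alternatively, and perhaps more cleanly, I would use the characterization that $\P_\I$ adds a Cohen real iff there is a continuous reading of the name $\dot c$ on some condition $G$ (a $\gdelta$ self-supporting set) such that $\dot c$ restricted to $G$ is an open map onto a nonmeager — in fact comeager — image; then I would show, using that $G$ is $\E$-perfect and hence carries full relative measure, that the preimage under such a continuous open map of a meager set can be made $\E$-positive, contradicting the Cohen-ness; this route leverages the continuous reading of names from Theorem \ref{genclosed}.

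\textbf{Main obstacle.} The delicate point is controlling the relative measure $\mu_{D_n}$ as the conditions shrink: passing from $D_n$ to $D_{n+1}$ by removing a ``bad'' clopen piece can drastically change the relative measure (the normalization $\mu(D_n)$ shrinks), so one must ensure the construction removes only pieces of small \emph{relative} measure at each stage and that these relative errors are summable, while simultaneously keeping each $D_n$ self-supporting (which already forbids removing too much near any clopen set). The tension between ``self-supporting'' (a topological-measure rigidity condition) and ``cutting away bad sets'' (which wants to remove measure freely) is the heart of the difficulty, and resolving it is exactly where the structure of $\E$ — that its closed members are precisely the closed null sets, so a self-supporting closed set has strictly positive measure in every clopen neighborhood it meets — must be used. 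The remaining steps (density of self-supporting sets, the final verification that the limiting closed set $N$ is nowhere dense, and the reduction of ``no Cohen real'' to the statement about names) are comparatively routine.
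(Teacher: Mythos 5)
Your high-level plan is correct and does line up with the paper's: reduce via continuous reading of names (and Lemma \ref{iperfectdense}/Lemma \ref{iperfectcl}) to a $\gdelta$ set $G$ with self-supporting closure $D$ and a continuous $f:G\to\baire$ reading the name, then exhibit a closed nowhere dense $N\subseteq\baire$ and an $\E$-positive $A\subseteq G$ with $A\Vdash\dot x\in\check N$. But at the step you yourself flag as ``the heart of the difficulty'' the proposal stops; it names the tension between self-supportingness and removing mass, but offers no mechanism to resolve it. That is precisely where the paper's actual work lies, and two concrete ideas are missing from your sketch.

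First, a single measure does not suffice. You propose to control everything with $\mu_D$ (the relative Haar measure on the self-supporting closed set $D$), but the continuous reading $f$ lives on the $\gdelta$ set $G$, not on $D$, and $G$ may well be $\mu$-null. Controlling $\mu$ on $D$ alone keeps $D\setminus\bigcup_n C_{\tau_n}'$ self-supporting but gives you no handle on whether $A=G\setminus\bigcup_n C_{\tau_n}$ remains dense in that closed set --- and you need $\cl A$ to coincide with it. The paper therefore introduces a second, auxiliary continuous strictly positive measure $\nu$ on $G$, chooses the $C_\tau'\subseteq D$ via the reduction property so that $C_\tau=C_\tau'\cap G$, and then picks $\tau_n$ with summable bounds $2^{-n-1}\varepsilon_i$ and $2^{-n-1}\delta_i$ simultaneously for $\mu$ on the clopen pieces of $D$ and for $\nu$ on the clopen pieces of $G$. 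Both controls are essential and your sketch has only one of them.

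Second, your proposed shortcut via ``$f$ is an open map onto a comeager image'' is not available. There is no reason the reading map $f$ should be open, the paper never claims it, and the proposed characterization of Cohen-adding in terms of open maps is not established in the paper (nor does it look correct as stated). Moreover your first route --- a fusion over a decreasing chain $D_0\supseteq D_1\supseteq\cdots$ of self-supporting closed sets --- runs directly into the renormalization problem you describe (shrinking $D_n$ changes $\mu_{D_n}$), and the paper simply avoids this: it does not shrink the condition step by step, but fixes $G$, $D$, $f$ once and removes a single countable union $\bigcup_n C_{\tau_n}$ with the measure bounds above, diagonalizing against an enumeration of $\otr$ so that $\lim T$ is nowhere dense. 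So while your overall strategy is the right one, the proof as sketched has a genuine gap at exactly the step it identifies as crucial.
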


\begin{proof}
  Suppose $B\in\P_\E$ and $\dot x$ is a name for a real such
  that $$B\Vdash\dot x\mbox{ is a Cohen real}.$$ By Lemma
  \ref{iperfectdense} and continuous reading of names we
  find a $\gdelta$ set $G\subseteq B$ such that $D=\cl{G}$
  is self-supporting and a continuous function
  $f:G\rightarrow\baire$ such that $G\Vdash \dot x= f(\dot
  g)$. Pick a continuous, strictly positive measure $\nu$ on
  $G$. For each $\tau\in\otr$ the set
  $C_\tau=f^{-1}\big[[\tau]\big]$ is a relative clopen in
  $G$. Find open sets $C_\tau'\subseteq D$ such that
  $C_\tau=C_\tau'\cap G$. $D$ is zero-dimensional, so by the
  reduction property for open sets we may assume that
  \begin{itemize}
  \item $C_{\tau_0}'\subseteq C_{\tau_1}'$ for
    $\tau_0\subseteq\tau_1$,
  \item $C_{\tau_0}'\cap C_{\tau_1}'=\emptyset$ for
    $\tau_0\perp\tau_1$.
  \end{itemize}
  We will find a tree $T\subseteq\otr$ such that $\lim T$ is
  nowhere dense in $\baire$ and the closure of the set
  $f^{-1}[\lim T]$ is self-supporting.

  Enumerate all nonempty clopen sets in $D$ in a sequence
  $\langle V_n':n<\omega\rangle$ and all nonempty clopen
  sets in $G$ in a sequence $\langle V_n:n<\omega\rangle$,
  and elements of $\otr$ in a sequence
  $\langle\sigma_n:n<\omega\rangle$. If $\tau\in\otr$, then
  $\langle C_{\tau^\smallfrown n}':n<\omega\rangle$ is a
  sequence of disjoint open sets in $D$ and $\langle
  C_{\tau^\smallfrown n}:n<\omega\rangle$ is a sequence of
  disjoint open sets in $G$. Thus for each $\varepsilon>0$
  there is $n\in\omega$ such that $\mu(C_{\tau^\smallfrown
    n}')<\varepsilon$ as well as $\nu(C_{\tau^\smallfrown
    n})<\varepsilon$.  Moreover, for each $m\in\omega$ there
  is $n\in\omega$ such that $\mu_{V_m'}(V_m'\cap
  C_{\tau^\smallfrown n}')<\varepsilon$ and
  $\nu_{V_m}(V_m\cap C_{\tau^\smallfrown n})<\varepsilon$.

  By induction, we find a collection of nodes
  $\tau_n\in\otr$ such that the tree
  $$T=\{\tau\in\otr: \forall n\ \tau_n\not\subseteq\tau\}$$
  is such that $\lim T$ is nowhere dense, and for each
  $m<\omega$ we have
  $$\mbox{either\quad} V_m'\subseteq \bigcup_{n<\omega}
  C_{\tau_n}'\mbox{\quad or\quad }\mu(V_m\setminus
  \bigcup_{n<\omega} C_{\tau_n}')>0$$ and
  $$\mbox{either\quad} V_m\subseteq \bigcup_{n<\omega}
  C_{\tau_n}\mbox{\quad or\quad} \nu(V_m\setminus
  \bigcup_{n<\omega} C_{\tau_n})>0.$$ Along the induction we
  also construct sequences of reals $\varepsilon_n\geq0$ and
  $\delta_n\geq0$. 

  At the $n$-th step of the induction consider the sets
  $U_n'=V_n'\setminus\bigcup_{i<n} C_{\tau_i}'$ and
  $U_n=V_n\setminus\bigcup_{i<n} C_{\tau_i}$, which are
  either empty or of positive measure ($\mu$ or $\nu$,
  respectively) by the inductive assumption. Put
  $\varepsilon_n=\mu(U_n')$, $\delta_n=\mu(U_n)$. Find
  $\tau_n\in\otr$ such that $\tau_n={\sigma_n}^\smallfrown
  k$ for some $k<\omega$ and for each $i\leq n$
  \begin{itemize}
  \item if $\varepsilon_i>0$, then
    $\mu_{V_i'}(C_{\tau_n}'\cap
    V_i')<2^{-n-1}\varepsilon_i$,
  \item if $\delta_i>0$, then $\nu_{V_i}(C_{\tau_n}\cap
    V_i)<2^{-n-1}\delta_i$.
  \end{itemize}

  The set
  $$A=G\setminus\bigcup_n C_{\tau_n}$$ is of type $\gdelta$.
  Moreover, it follows from the construction that
  $\cl{A}=D\setminus\bigcup_n C_{\tau_n}'$ and that $\cl{A}$
  is self-supported, so $A\not\in\E$ by Lemma
  \ref{iperfect}.  On the other hand, $A\Vdash\dot x\in\lim
  T$, which gives a contradiction, since $\lim T$ is nowhere
  dense.
\end{proof}

\begin{corollary}\label{eminimal}
  If $G$ is $\P_\E$-generic over $V$, then the extension
  $V\subseteq V[G]$ is minimal.
\end{corollary}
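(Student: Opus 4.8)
The plan is to combine the two results already established: Theorem~\ref{nocohene}, stating that $\P_\E$ adds no Cohen real, and the structural theorem of Zapletal \cite[Theorem 4.1.7]{Zpl:FI} recalled above, which analyses the intermediate models of a $\P_\I$-extension when $\I$ is generated by an analytic collection of closed sets. So the first step is to note that $\E$ meets the hypothesis of the latter: the closed members of $\E$ are precisely the closed null subsets of $\cantor$, and $\E\cap\closed(\cantor)$ is a $\gdelta$ (hence Borel, hence analytic) subset of $K(\cantor)$, so $\E$ is generated by an analytic collection of closed sets.

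Next I would fix an arbitrary intermediate extension $V\subseteq W\subseteq V[G]$ and apply Zapletal's theorem to conclude that $W$ is either $V$, or $V[G]$, or an extension of $V$ by a Cohen real. The only thing to rule out is the last possibility: were $W$ a Cohen extension of $V$, it would contain a real Cohen-generic over $V$, and since $W\subseteq V[G]$ this real would witness that $\P_\E$ adds a Cohen real, contradicting Theorem~\ref{nocohene}. Hence $W=V$ or $W=V[G]$, which is exactly the assertion that $V\subseteq V[G]$ is minimal.

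I do not expect any genuine obstacle here, since all the work is contained in Theorem~\ref{nocohene}; the only point to handle with a little care is to check that the meaning of ``adds a Cohen real'' in Theorem~\ref{nocohene} coincides with the one implicit in Zapletal's theorem (in both, that there is a real in the extension which is $2^{<\omega}$-generic over the ground model), after which the deduction is a one-line citation argument.
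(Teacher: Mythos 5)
Your argument matches the paper's intended (and essentially tacit) proof exactly: the paragraph preceding Theorem~\ref{nocohene} already records Zapletal's trichotomy for intermediate models of $\P_\I$-extensions when $\I$ is generated by an analytic family of closed sets, notes that $\E\cap\closed(\cantor)$ is $\gdelta$ in $K(\cantor)$, and thereby reduces minimality to ruling out Cohen reals, which is precisely Theorem~\ref{nocohene}. Your write-up fills in the same steps with a bit more care (checking the hypothesis of Zapletal's theorem and matching the two senses of ``adds a Cohen real''), so it is correct and not a different route.
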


Now we will introduce a fusion scheme for the $\sigma$-ideal
$\E$.  Denote by $G_\E$ the following game scheme. In his
$n$-th turn, Adam picks $\xi_n\in 2^n$ such that
$\xi_n\supsetneq\xi_{n-1}$ ($\xi_{-1}=\emptyset$). In her
$n$-th turn, Eve picks a basic clopen set $C_n\subseteq
[\xi_n]$ such that
$$\mu_{[\xi_n]}(C_n)<\frac{1}{n}.$$

For a set $A\subseteq\cantor$ we define the game $G_\E(A)$
in the game scheme $G_\E$ as follows. Eve wins a play in
$G_\E(A)$ if
$$x\not\in A\quad\vee\quad\forall^\infty
n\ x\in C_n$$ (where $x\in\cantor$ is the union of the
$\xi_n$'s picked by Adam). Otherwise Adam wins.

\begin{proposition}\label{gamechare}
  For any set $A\subseteq \cantor$, Eve has a winning
  strategy in $G_\E(A)$ if and only if $A\in\E$.
\end{proposition}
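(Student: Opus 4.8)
The plan is to prove both implications of the equivalence directly, using only that a play of $G_\E$ is completely determined by the point $x\in\cantor$ Adam builds bit by bit, together with the single quantitative constraint $\mu_{[\xi_n]}(C_n)<1/n$ on Eve's moves. For the implication $A\in\E\Rightarrow$ Eve has a winning strategy, I would write $A\subseteq\bigcup_n F_n$ with each $F_n$ closed null and, replacing $F_n$ by $\bigcup_{i\leq n}F_i$, with $F_n\subseteq F_{n+1}$. By the standard fact that a compact null subset of $\cantor$ is contained in a clopen set of arbitrarily small measure, fix a clopen $W_n\supseteq F_n$ with $\mu(W_n)<2^{-n}/n$. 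Eve's strategy is then: after Adam's move $\xi_n\in2^n$, respond with $C_n:=W_n\cap[\xi_n]$; this is clopen, $C_n\subseteq[\xi_n]$, and $\mu_{[\xi_n]}(C_n)=2^{n}\mu(W_n\cap[\xi_n])\leq 2^{n}\mu(W_n)<1/n$, so it is a legal move. If $x=\bigcup_n\xi_n$ is the resulting play and $x\in A$, then $x\in F_k$ for some $k$, whence $x\in F_n\subseteq W_n$ and $x\in[\xi_n]$, so $x\in C_n$, for every $n\geq k$; thus $\forall^\infty n\ x\in C_n$ and Eve has won the play.

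For the converse, let $S$ be a winning strategy for Eve in $G_\E(A)$. Since each of Adam's moves adds exactly one bit and is answered deterministically by $S$, Eve's $n$-th move depends only on $\xi_n$; write $C_n(\sigma)\subseteq[\sigma]$ for her clopen response once Adam has reached $\sigma\in2^n$, so $\mu_{[\sigma]}(C_n(\sigma))<1/n$. The key point is that the ``diagonal'' set $D_n:=\bigcup_{\sigma\in2^n}C_n(\sigma)$ is clopen, and, as the $C_n(\sigma)$ are pairwise disjoint (they lie in the disjoint cylinders $[\sigma]$),
\[
\mu(D_n)=\sum_{\sigma\in2^n}\mu([\sigma])\,\mu_{[\sigma]}(C_n(\sigma))<2^{n}\cdot 2^{-n}\cdot\tfrac1n=\tfrac1n .
\]
Also, for $x\in\cantor$ one has $x\in D_n$ iff $x\in C_n(x\har n)$, since $C_n(x\har n)\subseteq[x\har n]$ and the cylinders are disjoint. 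Now set $B_N:=\bigcap_{n\geq N}D_n$; this is closed and $\mu(B_N)\leq\inf_{n\geq N}\mu(D_n)=0$, so $B_N\in\E$. Finally, if $x\in A$, then running $S$ against the play in which Adam builds $x$ and using that $S$ is winning together with $x\in A$, there is $N$ with $x\in C_n(x\har n)$, i.e. $x\in D_n$, for all $n\geq N$; hence $x\in B_N$. Therefore $A\subseteq\bigcup_N B_N\in\E$, as desired.

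I do not expect a serious obstacle: once the game is unwound, the direction $A\in\E\Rightarrow$ Eve wins is essentially outer regularity of $\mu$, and the content of the reverse direction is the one-line estimate $\mu(D_n)<1/n$ that produces the closed null sets $B_N$ covering $A$. The one point I would be careful about is that Eve's $S$-response really is a function of $\xi_n$ alone, so that the union $D_n$ taken over all positions $\sigma\in2^n$ inherits the same measure bound as each individual $C_n(\sigma)$.
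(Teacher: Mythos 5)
Your proof is correct, and both directions are in essence the same as the paper's. In the direction ``$A\in\E\Rightarrow$ Eve wins'' the paper writes each generating closed null set $D_n$ as $\lim T_n$ for a tree $T_n$ and has Eve respond, after Adam reaches $\sigma\in 2^n$, with the clopen set obtained by cutting $T_n(\sigma)$ at a sufficiently high level $k$; this is the tree-coded version of the outer-regularity argument you give. Your variant is slightly cleaner: by fixing once and for all a clopen $W_n\supseteq F_n$ with $\mu(W_n)<2^{-n}/n$, Eve's answer $W_n\cap[\xi_n]$ is non-adaptive (it does not depend on which $\sigma\in 2^n$ Adam reached, only on $n$), whereas the paper's $k$ is chosen depending on $\sigma$. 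Incidentally, the paper's displayed bound $|T_n(\sigma)\cap 2^k|/2^k<1/n$ should really be $<2^{-n}/n$ to give $\mu_{[\sigma]}(C_n)<1/n$; your computation makes the factor of $2^n$ explicit and is the precise statement. In the converse direction your $D_n=\bigcup_{\sigma\in 2^n}C_n(\sigma)$ and $B_N=\bigcap_{n\geq N}D_n$ are exactly the paper's $E_n$ and $D_N$ (with the obvious typo $\bigcap_{m\geq n}E_n$ for $\bigcap_{m\geq n}E_m$ corrected), and your observation that a strategy's $n$-th response is a function of $\xi_n$ alone because the whole history is reconstructed from $\xi_n$ and $S$ is exactly what the paper tacitly uses.
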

\begin{proof}
  Suppose first that Eve has a winning strategy $S$ in
  $G_\E(A)$. For each $\sigma\in 2^{<\omega}$ consider a
  partial play $\tau_\sigma$ in which Adam picks
  successively $\sigma\har k$ for $k\leq |\sigma|$. Let
  $C_\sigma$ be the Eve's next move, according to $S$, after
  $\tau_\sigma$.  Put $E_n=\bigcup_{\sigma\in 2^n}
  C_\sigma$. Clearly $E_n$ is a clopen set and $\mu(E_n)\leq
  1\slash n$. Let $D_n=\bigcap_{m\geq n} E_n$. Now, each
  $D_n$ is a closed null set and $A\subseteq \bigcup_n D_n$
  since $S$ is a winning strategy. Therefore $A\in\E$.

  Conversely, assume that $A\in\E$. There are closed null
  sets $D_n$ such that $A\subseteq\bigcup_n D_n$. Without
  loss of generality assume $D_n\subseteq D_{n+1}$. Let
  $T_n\subseteq\omega^{<\omega}$ be a tree such that
  $D_n=\lim T_n$. We define a strategy $S$ for Eve as
  follows. Suppose Adam has picked $\sigma\in 2^n$ in his
  $n$-th move and consider the tree $T_n(\sigma)$. Since
  $\lim T_n(\sigma)$ is of measure zero, there is $k<\omega$
  such that
  $$\frac{|T_n(\sigma)\cap 2^k|}{2^k}<\frac{1}{n}.$$ Let Eve's
  answer be the set $\bigcup_{\tau\in T_n(\sigma)\cap
    2^k}\,[\tau]$.  One can readily check that this defines
  a winning strategy for Eve in $G_\E(A)$.
\end{proof}

\begin{corollary}\label{coregame}
  If $B\subseteq\cantor$ is Borel, then $B\in\E$ if and only
  if Eve has a winning strategy in $G_\E(B)$.
\end{corollary}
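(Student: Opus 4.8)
The equivalence claimed is exactly the restriction of Proposition \ref{gamechare} to Borel sets, so there is essentially nothing to prove beyond specializing that statement; the real point of the corollary is to record that $G_\E(\cdot)$ is a \emph{fusion scheme} for $\E$ in the sense of Section \ref{sec:BanachMazur}, and the plan is to present the argument as a verification of conditions (i)--(v) of that section together with an appeal to Proposition \ref{gamechare}. For $A\subseteq\cantor$ let $p(A)\subseteq\lim G_\E$ be the set of plays $g$ won by Eve in $G_\E(A)$, i.e. those $g$ for which $x\notin A$ or $\forall^\infty n\ x\in C_n$, where $x\in\cantor$ is the point produced by Adam in $g$ and $\langle C_n:n<\omega\rangle$ is the sequence of clopen sets chosen by Eve.

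Conditions (i)--(iii) I would check as follows. Each condition ``$x\in C_n$'' is clopen in $\lim G_\E$ (for any play it is decided at a finite stage, namely once Adam's moves determine $x$ below the length of Eve's $n$-th move), so $\{g:\forall^\infty n\ x\in C_n\}$ is $\fsigma$; since the map $g\mapsto x$ is continuous on $\lim G_\E$, the set $p(A)=\{g:x\notin A\}\cup\{g:\forall^\infty n\ x\in C_n\}$ is Borel whenever $A$ is, which is (i). For (ii), if $B\subseteq A$ then $x\notin A$ implies $x\notin B$, so $p(A)\subseteq p(B)$. For (iii), the clause ``$\forall^\infty n\ x\in C_n$'' does not involve the payoff argument, while ``$x\notin\bigcup_n A_n$'' is equivalent to ``$\forall n\ x\notin A_n$''; distinguishing these two cases gives $p(\bigcup_n A_n)=\bigcap_n p(A_n)$.

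Conditions (iv) and (v) concern Adam's moves. His $n$-th move $\xi_n\in 2^n$ codes the basic clopen set $U_n=[\xi_n]$, and since $\xi_{n+1}\supsetneq\xi_n$ we have $\cl{U_{n+1}}=U_{n+1}\subseteq U_n$ and $\diam(U_n)=2^{-n}<1/n$, which is (iv); the unique point of $\bigcap_n U_n$ is the point $x$ constructed by Adam, and if Adam wins a play in $G_\E(A)$ then $x\in A$ by the very definition of the payoff, which is (v). Hence $G_\E(\cdot)$ is a fusion scheme, and by Proposition \ref{gamechare} the family of $A\subseteq\cantor$ for which Eve has a winning strategy in $G_\E(A)$ is precisely the $\sigma$-ideal $\E$, so $G_\E(\cdot)$ is a fusion scheme for $\E$; restricting to Borel $B$ yields the corollary. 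I do not anticipate any genuine obstacle: every clause is a short verification, and the only one needing a moment's thought is the Borelness computation in (i), which comes down to observing that Adam's partial plays converge to $x$ and that ``eventually always'' is an $\fsigma$ condition.
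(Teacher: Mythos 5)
Your proof is correct and, as far as the literal statement goes, it takes the same (trivial) route as the paper: Corollary~\ref{coregame} is nothing more than the restriction of Proposition~\ref{gamechare} to Borel $B$, and the paper supplies no further argument. The bulk of your write-up---verifying conditions (i)--(v) so that $G_\E(\cdot)$ qualifies as a fusion scheme in the sense of Section~\ref{sec:BanachMazur}---is a sound and worthwhile side verification (the clopen-ness of ``$x\in C_n$'', the $\fsigma$ form of ``$\forall^\infty n\ x\in C_n$'', and the checks of (ii)--(v) are all correct), but it is additional context rather than part of what the corollary requires.
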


\section{Decomposing Baire class 1
  functions}\label{sec:Piececon}

Let $X$ and $Y$ be Polish spaces and $f:X\rightarrow Y$ be a
Borel function. We say that $f$ is \textit{piecewise
  continuous} if $X$ can be covered by a countable family of
closed sets on each of which $f$ is continuous.

Recall that a function is $\gdelta$-measurable if preimages
of $\gdelta$ sets are $\gdelta$ or, equivalently, preimages
of open sets are $\gdelta$. If $f:X\rightarrow Y$ is a
$\gdelta$-measurable function, then preimages of closed sets
are $\fsigma$. Therefore, if $Y$ is zero-dimensional, then
preimages of open sets are also $\fsigma$, so consequently
$\mathbf{\Delta}^0_2$.

The following characterization of piecewise continuity has
been given by Jayne and Rogers.

\begin{theorem}[{Jayne, Rogers, \cite[Theorem
    5]{JR}}]\label{jrthm}
  Let $X$ be a Souslin space and $Y$ be a Polish space. A
  function $f:X\rightarrow Y$ is piecewise continuous if and
  only if it is $\gdelta$-measurable.
\end{theorem}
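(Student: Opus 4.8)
The plan is to prove the two implications separately. \emph{Piecewise continuity $\Rightarrow$ $\gdelta$-measurability} is routine: if $X=\bigcup_{n<\omega}F_n$ with each $F_n$ closed and $f\har F_n$ continuous, then for a closed set $C\subseteq Y$ we have $f^{-1}[C]=\bigcup_{n<\omega}(f\har F_n)^{-1}[C]$, and each $(f\har F_n)^{-1}[C]$ is closed in $F_n$, hence closed in $X$; so $f^{-1}[C]$ is $\fsigma$ for all closed $C$, equivalently $f^{-1}[U]$ is $\gdelta$ for all open $U$, i.e.\ $f$ is $\gdelta$-measurable.

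For the converse I would run the machinery for $\sigma$-ideals generated by closed sets on the $\sigma$-ideal $\I^f$ on $X$ generated by the family $\K^f$ of closed subsets of $X$ on which $f$ is continuous. Since $X$ is a countable union of closed sets on which $f$ is continuous precisely when $X\in\I^f$, it suffices to prove $X\in\I^f$. First one checks that $\I^f$ is covered by the earlier results: $\K^f$ is obviously hereditary, and --- taking $X$ Polish, the general Souslin case reducing to this or handled by the original Jayne--Rogers argument --- it is coanalytic in the Effros space, since $f\har C$ is continuous iff for every basic open $V\subseteq Y$ the Borel set $C\cap f^{-1}[Y\setminus V]$ has closure contained in $f^{-1}[Y\setminus V]$, which is a $\coana$ condition in the code of $C$, uniformly over the countably many $V$. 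Hence by Corollary~\ref{coanaonana} the $\sigma$-ideal $\I^f$ is $\coana$ on $\ana$ and generated by closed sets, so Theorem~\ref{Sol.dich} applies to it.

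Now suppose toward a contradiction that $X\notin\I^f$. Since $X$ is analytic and $\I^f$-positive, Theorem~\ref{Sol.dich} yields an $\I^f$-positive $\gdelta$ subset of $X$; by Lemma~\ref{iperfectdense} and Lemma~\ref{iperfectcl}(i) we may pass to a nonempty closed $\I^f$-perfect set $F\subseteq X$. As $\K^f$ is hereditary, $F$ being $\I^f$-perfect --- equivalently $\K^f$-perfect, by Lemma~\ref{iperfectcl}(ii) --- means exactly that for every basic open $U$ with $U\cap F\neq\emptyset$, the function $f$ is \emph{not} continuous on $\cl{U\cap F}$.

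It remains to contradict this using that $g:=f\har F$ is still $\gdelta$-measurable, and this combinatorial core of the Jayne--Rogers theorem is the step I expect to be the main obstacle. The natural approach: fix a complete metric on $Y$; for each $n$, cover $Y$ by countably many closed sets of diameter $<1/n$ and pull them back along $g$, so that $\gdelta$-measurability gives, for each $n$, a countable cover of $F$ by $\fsigma$ sets on each of which $g$ has oscillation $<1/n$. One then amalgamates these covers, over all $n$, into a single countable cover of $F$ by sets closed in $F$ on each of which $g$ is \emph{genuinely} continuous; such a cover exhibits a nonempty relatively open $U\cap F$ with $f$ continuous on $\cl{U\cap F}$, the desired contradiction. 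This amalgamation is precisely where the work of Jayne and Rogers lies, and it does not follow from a single Baire-category argument (which only yields that the continuity points of $g$ are dense in $F$); one way to organize it is a transfinite Cantor--Bendixson derivative on the closed subsets of $F$, at each stage deleting the union of all relatively open sets whose closure carries a continuous restriction of $f$ --- the crux being that this strictly shrinks every nonempty stage, i.e.\ that $\gdelta$-measurability rules out a nonempty $\I^f$-perfect closed set --- after which the sequence terminates at a countable ordinal by hereditary Lindel\"ofness and the decomposition follows. In the spirit of the present paper, one could instead encode the decomposition by a winning strategy for Eve in a suitable Banach--Mazur-type fusion game attached to $\I^f$.
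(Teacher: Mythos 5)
The paper does not prove this theorem: it is cited as a known result of Jayne and Rogers, with a pointer to Motto Ros--Semmes \cite{BSLMR} for a short proof. So there is no proof in the paper for your proposal to be measured against; I will instead assess your argument on its own terms.

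Your easy direction (piecewise continuous $\Rightarrow$ $\gdelta$-measurable) is fine. Your reduction of the hard direction is also sound as far as it goes, and it fits well with the paper's machinery for the Polish case: $\K^f$ is hereditary and coanalytic, Corollary~\ref{coanaonana} makes $\I^f$ $\coana$ on $\ana$, and Theorem~\ref{Sol.dich} together with Lemmas~\ref{iperfectdense} and \ref{iperfectcl} reduces $X\notin\I^f$ to the existence of a nonempty closed $\K^f$-perfect set $F$. The problem is that everything you have done after that point is either an honest acknowledgment of a gap or a restatement of the same gap. The ``crux'' you isolate --- that for a nonempty closed $F$ with $f\har F$ still $\gdelta$-measurable there is a nonempty relatively open $U\subseteq F$ with $f\har\cl{U}$ continuous --- is, modulo the reduction you already performed, equivalent to the hard direction of Jayne--Rogers itself (given it, you contradict $\K^f$-perfection immediately; conversely, the hard direction plus Lemma~\ref{iperfect} gives it). The transfinite Cantor--Bendixson derivative you then sketch does not make progress: the claim that each nonempty stage strictly shrinks is literally that crux again, applied to $D_\alpha$ in place of $F$, and you say so yourself. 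So the proposal never actually engages with the combinatorial content of the theorem --- the inductive Hurewicz-type construction (or an equivalent game/tree argument) that produces, from a $\K^f$-perfect set, a witness that $f$ is not $\gdelta$-measurable. That construction is the theorem, and it is missing.

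A secondary but real issue: the theorem is stated for $X$ a Souslin space, while all the machinery you invoke (Corollary~\ref{coanaonana}, Theorem~\ref{Sol.dich}, the lemmas on $\I$-perfect sets) is developed on Polish spaces. ``The general Souslin case reducing to this or handled by the original Jayne--Rogers argument'' is hand-waving: a Souslin space is not Polish, $\I^f$ is a $\sigma$-ideal of subsets of a non-Polish space, and if you fall back to ``the original argument'' for the Souslin case you have conceded that the detour through the Solecki machinery is not a proof. If you want to pursue this route, you should either restrict the statement to Polish $X$ or do the genuine work of transferring the dichotomy to Souslin spaces; in either case, the core lemma still has to be proved by hand.
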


A nice and short proof of the Jayne-Rogers theorem can be
found in \cite{BSLMR}. Classical examples of Borel functions
which are not piecewise continuous are the Lebesgue
functions $L,L_1:\cantor\rightarrow\R$ (for definitions see
\cite[Section 1]{Sol.Dec}). For two functions
$f:X\rightarrow Y$ and $f':X'\rightarrow Y'$ we write
$f\sqsubseteq f'$ if there are topological embeddings
$\varphi:X\rightarrow X'$ and $\psi:Y\rightarrow Y'$ such
that $f'\circ\varphi=\psi\circ f$. In \cite{Sol.Dec} Solecki
strengthened Theorem \ref{jrthm} proving the following
result.

\begin{theorem}[{Solecki, \cite[Theorem 3.1]{Sol.Dec}}]\label{soldec}
  Let $X$ be a Souslin space, $Y$ be a Polish space and
  $f:X\rightarrow Y$ be Baire class 1. Then
  \begin{itemize}
  \item either $f$ is piecewise continuous,
  \item or $L\sqsubseteq f$, or $L_1\sqsubseteq f$.
  \end{itemize}
\end{theorem}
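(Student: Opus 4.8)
The plan is to reduce, via the Jayne--Rogers theorem, to a combinatorial construction of a Cantor scheme. Assume $f$ is not piecewise continuous and let $\I^f$ be the $\sigma$-ideal generated by the closed sets on which $f$ is continuous; this family of closed sets is hereditary, and $X\notin\I^f$ exactly because $f$ is not piecewise continuous. Regarding $X$ as an $\I^f$-positive analytic set, Theorem~\ref{Sol.dich} produces an $\I^f$-positive $\gdelta$ set, which by Lemma~\ref{iperfectdense} contains an $\I^f$-perfect $\gdelta$ set $G'$; by Lemma~\ref{iperfectcl} the closed set $F:=\cl{G'}$ is then $\I^f$-perfect as well. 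Because $F$ is closed, this says exactly that $f\har\cl{U\cap F}$ is not piecewise continuous for every basic open $U$ meeting $F$, hence --- by Theorem~\ref{jrthm} again --- not $\gdelta$-measurable there. So, after replacing $X$ by $F$, I may assume that the failure of $\gdelta$-measurability is \emph{hereditary}: for every nonempty open $U$ there is an open $V\subseteq Y$ with $f^{-1}[V]\cap\cl U$ in $\fsigma\setminus\gdelta$ relative to $\cl U$. (If $X$ is merely Souslin one first performs the routine reduction to the Polish case.)

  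The heart of the matter is then a recursion building a Cantor scheme $\langle U_s:s\in\ctr\rangle$ of nonempty open subsets of $X$ with $\cl{U_{s^\smallfrown i}}\subseteq U_s$ and $\diam(U_s)\to 0$, hence an embedding $\varphi:\cantor\to X$ with $\{\varphi(x)\}=\bigcap_n U_{x\har n}$, along which the values of $f$ are controlled step by step. Since for a witnessing open set $V$ the set $A=f^{-1}[V]$ is $\fsigma$ but not $\gdelta$ on every open piece, one can keep both $A$ and its complement dense through the branching: the $A$-part furnishes a countable dense copy of $\mathbb Q$ sent into $V$ (this is the Hurewicz-type ingredient), while the complementary part is sent into $Y\setminus V$. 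The dichotomy now reflects the behaviour of $f$ off the $\mathbb Q$-part: either the scheme can be steered so that $f$ restricted to $\varphi[\cantor]\setminus A$ is continuous, in which case the sole obstruction to $\gdelta$-measurability of $f\circ\varphi$ is the dense $\mathbb Q$-part and $f\circ\varphi$ is, up to the obvious change of coordinates on the target space, a topological copy of Lebesgue's function $L$; or no such steering is possible, which means $f$ is bad on a dense subset of the complement as well, and pushing this obstruction self-similarly down the scheme produces a topological copy of $L_1$. In either case one extracts embeddings $\varphi:\cantor\to X$ and $\psi:\R\to Y$ witnessing $L\sqsubseteq f$, respectively $L_1\sqsubseteq f$.

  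The recursion in the second paragraph is where I expect the real work to lie. It must preserve at once the Cantor-scheme geometry, density of both $A$ and its complement inside the scheme, and the convergence --- respectively oscillation --- invariants on the $f$-values that force the limiting function to equal $L$ or $L_1$ \emph{on the nose}, not merely to sit Wadge-above them; moreover the alternative of the dichotomy has to be resolved coherently all the way down rather than branch by branch. The freedom needed for this comes from $f$ being Baire class~$1$: relative to every subspace $f$ has a dense $\gdelta$ set of points of continuity, so the open sets $U_s$ can always be moved into a region on which $f$ is as continuous as the current case requires. Carrying the target-side embedding $\psi$ along compatibly with these choices, and dealing with the Souslin case, are the remaining technical points.
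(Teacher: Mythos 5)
The paper itself does not prove Theorem~\ref{soldec}; it is quoted verbatim from Solecki's paper \cite{Sol.Dec} and used later as a black box (notably to deduce the Jayne--Rogers characterization and to motivate the study of $\I^f$). So there is no ``paper proof'' to compare against, and your proposal must be judged on its own.

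Your first paragraph is a reasonable and correct normalization: replacing $X$ by an $\I^f$-perfect closed set via Theorem~\ref{Sol.dich} and Lemmas~\ref{iperfectdense}, \ref{iperfectcl} is a legitimate way to make the failure of piecewise continuity hereditary, and the Hurewicz-scheme flavour of your second paragraph is indeed the right family of ideas (the paper's own Proposition~\ref{piececon} is a relative of this). But the proposal has a genuine gap exactly where the theorem has content: the dichotomy between $L$ and $L_1$. You phrase it as ``either the scheme can be steered so that $f$ restricted to $\varphi[\cantor]\setminus A$ is continuous, or no such steering is possible.'' That is a meta-statement about the existence of a successful recursion, not a checkable condition on $f$, and it gives no mechanism for deciding the alternative \emph{coherently} (you yourself flag that it must not be resolved branch by branch). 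In Solecki's actual proof this is precisely where a concrete combinatorial invariant of $f$ enters — an analysis of how the closed sets of continuity sit relative to each other (a rank/derivative or a game on which case you are in) — and without naming that invariant the dichotomy is asserted rather than proved. There are two further unresolved points you acknowledge but do not close: (a) the construction must produce an actual embedding $\psi:\R\to Y$ on the target side, not merely control the $f$-values qualitatively, and this interacts nontrivially with the choice of the clopen sets $U_\tau$; (b) the two cases must yield $L$ and $L_1$ \emph{on the nose} up to the embeddings, which requires pinning down the oscillation/convergence behaviour exactly, and it is not explained how the Baire class 1 hypothesis buys enough freedom for that. As written, the proposal is a credible outline of where a proof would go, but the step that distinguishes $L$ from $L_1$ — the theorem's actual content — is missing.
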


Theorem \ref{jrthm} follows from Theorem \ref{soldec}
because neither $L$ nor $L_1$ is $\gdelta$-measurable.

From now now until the end of this section we fix a Polish
space $X$ and a Baire class 1, not piecewise continuous
function $f:X\rightarrow\baire$ ($\baire$ can be replaced
with any zero-dimensional Polish space). Consider the
$\sigma$-ideal $\I^f$ on $X$ generated by closed sets on
which $f$ is continuous. We will prove that the forcing
$\P_{\I^f}$ is equivalent to the Miller forcing (see
Corollary \ref{piececonmiller}).

Suppose $C\subseteq X$ is a compact set and
$c:\cantor\rightarrow C$ is a homeomorphism. We call $(c,C)$
\textit{a copy of the Cantor space} and denote it by
$c:\cantor\hookrightarrow C\subseteq X$.  We denote by $\Q$
the set of all points in $\cantor$ which are eventually
equal to $0$.

\begin{proposition}\label{piececon}
  Suppose that $G\subseteq X$ is a $\gdelta$ set such that
  $G\not\in \I^f$. There exist an open set
  $U\subseteq\baire$ and a copy of the Cantor space
  $c:\cantor\hookrightarrow C\subseteq X$ such that
  \begin{itemize}
  \item $f^{-1}[U]\cap C=c[\Q]$,
  \item $C\setminus c[\Q]\subseteq G$.
  \end{itemize}
\end{proposition}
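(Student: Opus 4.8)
The strategy is to exploit the fact that $f$ is not piecewise continuous --- equivalently, by Theorem \ref{jrthm}, $f$ is not $\gdelta$-measurable --- to locate, inside the $\I^f$-positive $\gdelta$ set $G$, a situation that looks like a scaled copy of the canonical counterexample. More precisely, I would first observe that since $G\not\in\I^f$, the restriction $f\har\overline G$ (or $f$ on a suitable closed $\I^f$-positive subset of $\overline G$) cannot be $\gdelta$-measurable; otherwise $\overline G$ would be covered by countably many closed sets on each of which $f$ is continuous, and intersecting with $G$ would witness $G\in\I^f$. So there is an open $U\subseteq\baire$ such that $f^{-1}[U]$ is not relatively $\gdelta$ in the relevant closed set. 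Passing to a basic clopen $U$ and shrinking, I would arrange that $f^{-1}[U]$ is not relatively $\fsigma$ either (using zero-dimensionality of $\baire$, the failure of $\gdelta$-measurability is witnessed by open sets whose preimages are not $\fsigma$), which is the key non-triviality we need: a relatively $\fsigma$-but-not-$\gdelta$ type configuration is exactly what lets one build a Cantor copy whose $f^{-1}[U]$-trace is a countable dense set.

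Next I would build the copy $c\colon\cantor\hookrightarrow C\subseteq X$ by a fusion/tree construction indexed by $2^{<\omega}$, assigning to each node $s$ a nonempty relatively clopen piece $C_s$ of a fixed $\I^f$-positive closed subset of $\overline G$, with $C_{s^\frown i}\subseteq C_s$ disjoint for $i=0,1$, shrinking diameters, so that $C=\bigcap_n\bigcup_{|s|=n}C_s$ is a homeomorphic copy of $\cantor$ via $c(x)=$ the unique point of $\bigcap_n C_{x\har n}$. The bookkeeping requirement is that along the ``spine'' --- the nodes $0^n$ --- and their immediate right successors $0^n{}^\frown 1$ I place points (and small neighborhoods) that do land in $f^{-1}[U]$, while everywhere else I stay out of $f^{-1}[U]$; concretely I want $f^{-1}[U]\cap C = c[\Q]$ where $\Q$ is the set of eventually-zero sequences, and $C\setminus c[\Q]\subseteq G$. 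Getting the points into or out of $f^{-1}[U]$ is possible precisely because the preimage $f^{-1}[U]$, being neither relatively open nor relatively closed nor $\fsigma$ in the closed set we work in, meets every relative clopen piece in a ``topologically thick'' but not co-meager-trivial way: in any clopen subpiece one can find a subpiece entirely inside $f^{-1}[\baire\setminus U]$ (this uses that $f^{-1}[U]$ is not relatively co-$\fsigma$, i.e. its complement is not relatively $\fsigma$, so $f^{-1}[U]$ has empty relative interior complement behavior) and also points of $f^{-1}[U]$ are dense in the relevant pieces. I would set this up so that at stage $n$, after committing $C_{0^n}$, I choose a point $p_n\in C_{0^n}$ with $f(p_n)\in U$, let $c(0^{n-1}{}^\frown 1)$-region be a small clopen around $p_n$ sitting inside $f^{-1}[U]$ (making $c[\Q]$ exactly the points accumulating to the spine), and take $C_{0^n{}^\frown 1}$ inside this $f^{-1}[U]$-neighborhood, while for any node $s$ not on the spine I recursively work inside $f^{-1}[\baire\setminus U]$ so that no further spine ever occurs below $s$; thus $f^{-1}[U]\cap C$ is forced to be exactly $\{c(0^n{}^\frown 1^{(k)}\cdots): \text{eventually }0\}$, i.e. $c[\Q]$.

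Finally, to get $C\setminus c[\Q]\subseteq G$ I use that $G$ is $\gdelta$, say $G=\bigcap_m O_m$ with $O_m$ open decreasing: during the fusion I additionally require, for every node $s\neq 0^{|s|}$, that $C_s\subseteq O_{|s|}$; since every $x\in\cantor\setminus\Q$ passes through infinitely many non-spine initial segments $x\har n$ with $C_{x\har n}\subseteq O_n$, the point $c(x)$ lies in $\bigcap_m O_m=G$. The spine points $c(0^\omega)$ and the points of $c[\Q]$ are allowed to escape $G$, which is fine. The compatibility of all these demands --- staying $\I^f$-positive (so the construction does not die), realizing the $f^{-1}[U]$/$f^{-1}[\baire\setminus U]$ dichotomy at each node, shrinking into $O_n$ off the spine, and keeping the whole thing a Cantor scheme --- is the main obstacle, and it hinges entirely on the earlier step that $f^{-1}[U]$ (for a well-chosen basic clopen $U$) is a ``properly $\fsigma$'' relative subset of an $\I^f$-positive closed set, so that both it and its complement are ``large'' in every relatively clopen piece in the sense needed. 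Once the scheme is carried out, $c$ is a homeomorphism onto the compact set $C$ by the usual Cantor-scheme argument, and the two bulleted conclusions hold by construction.
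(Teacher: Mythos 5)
Your plan has two serious problems, both rooted in a misreading of what Baire-class-$1$ functions into zero-dimensional spaces can do. First, you fix a single clopen set $U$ at the outset; but if $U$ is clopen and $f$ is Baire class $1$, then $f^{-1}[U]$ is $\mathbf{\Delta}^0_2$ (preimages of open sets are $\fsigma$, preimages of closed sets are $\gdelta$, and a clopen set is both). Consequently $f^{-1}[U]\cap C$ is relatively $\gdelta$ in $C$, whereas $c[\Q]$ is a countable dense subset of a perfect compact set, hence relatively $\fsigma$ and \emph{not} $\gdelta$ by the Baire category theorem. The required identity $f^{-1}[U]\cap C=c[\Q]$ is therefore impossible for clopen $U$: the set $U$ in the proposition must be a genuinely open, non-clopen set, built as a union of countably many clopen pieces chosen locally along the construction (and this is exactly what the paper's Hurewicz-scheme proof does, with $U=\bigcup_{\tau} U_\tau$). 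Second, and relatedly, your claim that failure of $\gdelta$-measurability can be witnessed by an open set whose preimage is not $\fsigma$ is false in this setting: for a Baire-class-$1$ function $f\colon X\to\baire$, preimages of open sets are \emph{always} $\fsigma$ (the paper notes this explicitly); what can fail is only the $\gdelta$ side. So the ``topological thickness'' step you base the fusion on --- that both $f^{-1}[U]$ and its complement are large in every relatively clopen piece because $f^{-1}[U]$ fails to be $\fsigma$ --- has no valid starting point.

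The paper replaces this with a largeness notion measured by the $\sigma$-ideal $\I^f$ rather than by topology alone. Lemma~\ref{jr} is the engine: given a closed set $F$ with $F\cap G\not\in\I^f$, it yields a point $x\in F$ and a \emph{small} clopen $U\ni f(x)$ such that in every neighbourhood $V$ of $x$ the set $(V\setminus f^{-1}[U])\cap G$ is still $\I^f$-positive. Applying this locally at each node of $Q$ supplies the clopen pieces $U_\tau$ and anchor points $x_\tau$, and the final $U$ is the union of the $U_\tau$. Your overall tree architecture --- a fusion of shrinking closed pieces, a spine carrying points of $f^{-1}[U]$, and shrinking into the sets $G_n$ off the spine so that $C\setminus c[\Q]\subseteq G$ --- matches the paper's Hurewicz scheme in spirit (though the paper places the anchor points at all nodes ending in $0$, not only on the leftmost spine, which is needed to cover all of $c[\Q]$). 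If you rebuild your construction around a local lemma of the type above and let $U$ accrete as a countable union of clopens rather than fixing it in advance, the argument can be made to go through.
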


\begin{proof}
  Denote $\{\tau\in\ctr:
  \tau=\emptyset\vee\tau(|\tau|-1)=0\}$ by $Q$.

  \begin{definition}
    A \textit{Hurewicz scheme} is a Cantor scheme of closed
    sets $F_\tau\subseteq X$ for $\tau\in\ctr$ together with
    a family of points $x_\tau\in X$ and clopen sets
    $U_\tau\subseteq\baire$ for $\tau\in Q$ such that:
    \begin{itemize}
    \item $x_{\tau^\smallfrown 0}=x_\tau$,
      $U_{\tau^\smallfrown 0}=U_\tau$,
    \item $x_\tau\in F_\tau \cap f^{-1}[U_\tau]$.
    \end{itemize}
  \end{definition}
  Suppose that $G=\bigcap_n G_n$ with each $G_n$ open and
  $G_{n+1}\subseteq G_n$. We will construct a Hurewicz
  scheme such that for each $\tau\in 2^n$ the following two
  conditions hold:
  \begin{itemize}
  \item $\Big(F_\tau\setminus f^{-1}\big[\bigcup_{\sigma\in
      2^n\cap Q}U_\sigma\big]\Big)\cap G\not\in \I^f$,
  \item $F_{\tau^\smallfrown 1}\subseteq
    \Big(F_\tau\setminus f^{-1}\big[\bigcup_{\sigma\in
      2^n\cap Q} U_\sigma\big]\Big)\cap G_{|\tau|}$.
  \end{itemize}
  We need the following lemma (its special case can be found
  in the proof of the Jayne-Rogers theorem in \cite{BSLMR}).
  \begin{lemma}\label{jr}
    If $F$ is a closed set in $X$ and $F\cap G\not\in \I^f$
    then there is $x\in F$ and a clopen set
    $U\subseteq\baire$ such that $f(x)\in U$ and for each
    open neighborhood $V$ of $x$ $$(V\setminus
    f^{-1}[U])\cap G\,\not\in \I^f.$$ Moreover, if
    $W\subseteq\baire$ is a clopen set such that $F\cap
    f^{-1}[W]\not\in \I^f$, then we may require that
    $U\subseteq W$.
  \end{lemma}
  \begin{proof}
    First let $W=\baire$. Without loss of generality assume
    that for each nonempty open set $V\subseteq F$ we have
    $V\cap G\not\in \I^f$.  Supppose that the conclusion is
    false. We show that $f$ is continuous on $F$,
    contradicting the fact that $F\cap G\not\in \I^f$. Pick
    arbitrary $x\in F$ and a clopen set $U$ such that
    $f(x)\in U$. By the assumption there is an open
    neighborhood $V\ni x$ such that $(V\setminus
    f^{-1}[U])\cap G\,\in \I^f$. We claim that $V\subseteq
    f^{-1}[U]$. Suppose otherwise, then there is $y\in V$
    such that $f(y)\not\in U$. Pick a clopen set $U'$ such
    that $U'\cap U=\emptyset$ and $f(y)\in U'$.  Again, by
    the assumption there is an open neighborhood $V'$ of $y$
    such that $(V'\setminus f^{-1}[U'])\cap G\,\in \I^f$.
    Now $V''=V\cap V'$ is a nonempty open set and since
    $U'\cap U=\emptyset$ we have that $$V''\cap G\
    \subseteq\ (V\setminus f^{-1}[U])\cap G\ \cup\
    (V'\setminus f^{-1}[U'])\cap G.$$ This shows that
    $V''\cap G\,\in \I^f$, a contradiction.

    Now, if $W\subseteq\baire$ is a clopen set such that
    $F\cap f^{-1}[W]\not\in \I^f$ then $f^{-1}[W]\cap F$ is
    an $\fsigma$ set since $f$ is Baire class 1. So there is
    a closed set $F'\subseteq F$ such that $F'\not\in \I^f$
    and $f[F']\subseteq W$. Applying the previous argument
    to $F'$ we get a clopen set $U$ such that $U\subseteq
    W$.  This ends the proof.
  \end{proof}

  Now we construct a Hurewicz scheme. First use Lemma
  \ref{jr} to find $x_\emptyset$, $U_\emptyset$ and put
  $F_\emptyset=X$.  Suppose the scheme is constructed up to
  the level $n-1$. 

  First we will construct $U_{\sigma^\smallfrown 0}$ and
  $x_{\sigma^\smallfrown 0}$ for each
  $\sigma\in2^{n-1}\setminus Q$ (recall that for
  $\sigma\in2^{n-1}\cap Q$ we put $U_{\sigma^\smallfrown
    0}=U_\sigma$ and $x_{\sigma^\smallfrown 0}=x_\sigma$).

  For each $\sigma\in2^{n-1}\setminus Q$ find a nonempty,
  perfect closed set $C_\sigma\subseteq\baire$ such that
  $F_\sigma\cap f^{-1}[V]\,\not\in \I^f$ for each nonempty
  relatively clopen set $V\subseteq C_\sigma$ (this is done
  by removing from $\baire$ those clopen sets $U$ such that
  $F_\tau\cap f^{-1}[U]\in \I^f$).

  \begin{lemma}
    There is a sequence of nonempty clopen (in $\baire$)
    sets $\langle W_\tau: \tau\in2^{n-1}\setminus Q\rangle$
    such that
    \begin{itemize}
    \item $W_\tau\cap C_\tau\not=\emptyset$
    \item for each $\sigma\in 2^{n-1}\cap Q$ and for each
      open neighborhood $V$ of $x_\sigma$ we have
      $$\bigg(\big(V\setminus\bigcup_{\sigma'\in 2^{n-1}\cap Q}
      f^{-1}[U_{\sigma'}]\big)\setminus\bigcup_{\tau\in
        2^{n-1}\setminus Q} f^{-1}[W_\tau]\bigg)\cap G\,\not\in
      \I^f.$$
    \end{itemize}    
  \end{lemma}
  \begin{proof}
    Enumerate $2^{n-1}\setminus Q$ in a sequence
    $\langle\tau_i:i<2^{n-2}\rangle$ and construct the sets
    $W_{\tau_i}$ by induction on $i<2^{n-2}$. Fix
    $i<2^{n-2}$ and suppose that $W_{\tau_j}$ are already
    defined for $j<i$ and
    \begin{displaymath}
      \bigg(\big(V\setminus\bigcup_{\sigma\in 2^{n-1}\cap Q}
      f^{-1}[U_\sigma]\big)\setminus\bigcup_{j<i}
      f^{-1}[W_{\tau_j}]\bigg)\cap G\,\not\in \I^f.
    \end{displaymath}

    \begin{claim}
      If $O_0$ and $O_1$ are two disjoint nonempty clopen
      sets in $\baire$, then for each $\sigma\in2^{n-1}\cap
      Q$ there exists $k\in\{0,1\}$ such that for each open
      neighborhood $V$ of $x_\sigma$ the following holds
      \begin{displaymath}
        \bigg(\big(V\setminus\bigcup_{\sigma\in 2^{n-1}\cap Q}
        f^{-1}[U_\sigma]\big)\setminus\bigcup_{j<i}
        f^{-1}[W_{\tau_j}]\bigg)\cap G\setminus
        f^{-1}[O_k]\,\not\in \I^f.
      \end{displaymath}      
    \end{claim}
    \begin{proof}
      Notice that for a single open neighborhood $V$ of
      $x_\sigma$ one $k\in\{0,1\}$ is good. If $\langle
      V_n:n<\omega\rangle$ is a base at $x_\sigma$, then
      some $k\in\{0,1\}$ is good for infinitely many of
      them.
    \end{proof}
    Enumerate $2^{n-1}\cap Q$ in a sequence
    $\langle\sigma_k:k<2^{n-2}\rangle$. Using the above
    Claim and the fact that $C_{\tau_i}$ is perfect, find a
    decreasing sequence of nonempty clopen sets
    $O_k\subseteq \baire$ for $k<2^{n-2}$ such that
    \begin{itemize}
    \item $O_k\subseteq O_{k-1}$,
    \item $O_k\cap C_{\tau_i}\not=\emptyset$,
    \item for each open neighborhood $V$ of $x_{\sigma_k}$
      \begin{displaymath}
        \bigg(\big(V\setminus\bigcup_{\sigma\in 2^{n-1}\cap Q}
        f^{-1}[U_\sigma]\big)\setminus\bigcup_{j<i}
        f^{-1}[W_{\tau_j}]\bigg)\cap G\setminus
        f^{-1}[O_k]\,\not\in \I^f.
      \end{displaymath}
    \end{itemize}
    Finally, let $W_{\tau_i}$ be the last of $O_k$'s.
  \end{proof}

  By the assumption on $C_\tau$'s, we have $F_\tau\cap
  f^{-1}[W_\tau]\not\in \I^f$, for each
  $\tau\in2^{n-1}\setminus Q$. Using Lemma \ref{jr}, for
  each $\tau\in 2^{n-1}\setminus Q$ find a clopen set
  $U_{\tau^\smallfrown 0}\subseteq W_\tau$ and a point
  $x_{\tau^\smallfrown 0}$ such that the assertion of Lemma
  \ref{jr} holds.

  Now all $U_\tau$ and $x_\tau$ for $\tau\in2^n\cap Q$ are
  defined and we need to find sets $F_\sigma$ for
  $\sigma\in2^n$.
  \begin{claim}
    For each $\sigma\in2^{n-1}$ there are two disjoint
    $\I^f$-positive closed sets $F_{\sigma^\smallfrown 0},
    F_{\sigma^\smallfrown 1} \subseteq F_\sigma$ of
    diameters less than $1\slash n$ such that
    $$F_{\sigma^\smallfrown 1}\subseteq
    \big(F_\sigma\setminus\bigcup_{\tau\in 2^n\cap
      Q}f^{-1}[U_\tau]\big)\cap G_{n-1}$$ and
    $F_{\sigma^\smallfrown 0}$ contains
    $x_{\sigma^\smallfrown 0}$.
  \end{claim}
  \begin{proof}
    For each $\sigma\in 2^{n-1}$ take an open neighborhood
    $V_\sigma$ of $x_{\sigma^\smallfrown 0}$ of diameter
    $<1\slash n$. The set
    $$V_\sigma\setminus f^{-1}\big[\bigcup_{\tau\in2^n\cap Q}
    U_\tau\big]$$ is $\fsigma$ (since $f$ is Baire class 1)
    which has $\I^f$-positive intersection with $G$. Thus it
    has a closed subset $F$ such that $F$ also has
    $\I^f$-positive intersection with $G$.  Now, the set
    $F\cap G_{n-1}$ is $\fsigma$, so find
    $F_{\sigma^\smallfrown 1}$ which is a closed subset of
    $F\cap G_{n-1}$ and has $\I^f$-positive intersection
    with $F\cap G$. Let $F_{\sigma^\smallfrown 0}$ be a
    closed neighborhood of $x_{\sigma^\smallfrown 0}$,
    disjoint from $F_{\sigma^\smallfrown 1}$.
  \end{proof}
 
  This ends the construction of the Hurewicz scheme. To
  finish the proof, we put $U=\bigcup_{\tau\in\ctr} U_\tau$,
  $C=\bigcap_{n<\omega}\bigcup_{\tau\in2^n} F_\tau$ and
  $c:\cantor\hookrightarrow C\subseteq X$ such that
  $c(x)\in\bigcap_{n<\omega} F_{x\upharpoonright n}$ for
  each $x\in\cantor$.
\end{proof}

\begin{proposition}\label{piececoncoanaonana}
  The $\sigma$-ideal $\I^f$ is $\coana$ on $\ana$.
\end{proposition}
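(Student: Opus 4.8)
The plan is to derive this from Corollary~\ref{coanaonana}. By definition $\I^f$ is the $\sigma$-ideal generated by
$$\K=\{C\subseteq X:\ C\text{ is closed and }f\har C\text{ is continuous}\},$$
and $\K$ is hereditary, since the restriction of a continuous function to a closed subset of its domain is again continuous. Hence, by Corollary~\ref{coanaonana}, it suffices to check that $\K$ is coanalytic; equivalently, using the universal closed set $\tilde C\subseteq\baire\times X$ from Section~\ref{sec:coanaidclosed}, that $\{t\in\baire:\ f\har\tilde C_t\text{ is continuous}\}$ is $\coana$.

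For the main step I would use that, since $\{[\sigma]:\sigma\in\otr\}$ is a countable clopen basis of $\baire$, a restriction $f\har C$ is continuous if and only if $f^{-1}[[\sigma]]\cap C$ is relatively open in $C$ for every $\sigma\in\otr$; and this in turn holds if and only if for every $\sigma\in\otr$ and every $x\in X$
$$\bigl(x\in\tilde C_t\ \wedge\ x\in f^{-1}[[\sigma]]\bigr)\ \Longrightarrow\ \exists V\ \bigl(x\in V\ \wedge\ V\cap\tilde C_t\subseteq f^{-1}[[\sigma]]\bigr),$$
where $V$ ranges over a fixed countable basis of open sets of $X$. Since $f$ is Borel, for each $\sigma$ the relation ``$y\in f^{-1}[[\sigma]]$'' is Borel in $y$, and ``$y\in\tilde C_t$'' is Borel in $(t,y)$ because $\tilde C$ is closed.

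It then remains to count complexity. For fixed $\sigma$ and $V$ the condition ``$V\cap\tilde C_t\subseteq f^{-1}[[\sigma]]$'' equals
$$\forall y\in X\quad\bigl((y\in V\wedge y\in\tilde C_t)\ \Longrightarrow\ y\in f^{-1}[[\sigma]]\bigr),$$
whose matrix is Borel in $(t,y)$, so this condition is $\coana$ in $t$. The quantifier ``$\exists V$'' is a countable disjunction and ``$x\in V$'' is clopen in $x$, so the right-hand side of the previous implication is $\coana$ in $(t,x)$, and hence so is the implication itself (its premise is Borel). Since $\coana$ is closed under countable unions, countable intersections, the number quantifier, and universal quantification over a Polish space, quantifying over $x\in X$ and then over $\sigma\in\otr$ shows that $\{t:\ f\har\tilde C_t\text{ is continuous}\}$ is $\coana$. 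Thus $\K$ is coanalytic, and Corollary~\ref{coanaonana} yields that $\I^f$ is $\coana$ on $\ana$.

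The only genuine work is the bookkeeping in the last paragraph; the one point to be careful about is to place the quantifier over points $x$ \emph{outside} the existential over basic open sets $V$, so that ``$\exists V$'' stays a harmless countable disjunction rather than an $\exists^{\baire}$ that would push the complexity up to $\shoenfield$.
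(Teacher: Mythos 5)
Your proof is correct and takes the same route as the paper, which disposes of the proposition in one line by invoking Corollary~\ref{coanaonana} after noting that the family of closed sets on which $f$ is continuous is hereditary and coanalytic. You have simply supplied the complexity bookkeeping that the paper leaves implicit, and you correctly identified the one point requiring care, namely keeping $\exists V$ as a countable disjunction over a fixed basis rather than a real-quantifier.
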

\begin{proof}
  This follows from Corollary \ref{coanaonana} since the
  family of closed sets on which $f$ is continuous is
  hereditary and $\coana$.
\end{proof}

\begin{remark}
  Using Proposition \ref{piececon} we can explicitly write
  the formula defining the set of closed sets in $\I^f$.
  Let $\tilde K\subseteq \baire\times X$ be the universal
  closed set. Notice that $\tilde K_x\not\in \I^f$ if and
  only if
  \begin{eqnarray*}
    \exists U\subseteq\cantor\mbox{ open}\quad\exists c:\cantor\rightarrow
    \tilde K_x\mbox{ topological embedding}\qquad\qquad\\\big(c[\cantor]\cap f^{-1}[U]\mbox{ is
      dense in }c[\cantor]\big)\wedge \big(c[\cantor]\setminus
    f^{-1}[U]\mbox{ is dense in }c[\cantor]\big).    
  \end{eqnarray*}
  Indeed, the left-to-right implication follows from
  Proposition \ref{piececon} (when $G=X$).  The
  right-to-left implication holds because the set
  $f^{-1}[U]$ is an $\fsigma$ set which is dense and meager
  on $c[\cantor]$, therefore it cannot be a relative
  $\gdelta$ set on $c[\cantor]$. Hence, by the Jayne-Rogers
  theorem we have that $f$ is not piecewise continuous on
  $c[\cantor]$ and $\tilde K_x\not\in \I^f$.
  
  Now, the above formula is $\ana$. Indeed, it is routine to
  write a $\ana$ formula saying that $c:\cantor\rightarrow
  K_x$ is a topological embedding. The first clause of the
  conjunction can be written as
  $$\forall\tau\in\ctr\ \exists x\in[\tau]\quad f(c(x))\in
  U,$$ which is $\ana$, and analogously we can rewrite the
  second clause.
\end{remark}

If $G\subseteq X$ is a $\gdelta$ set and
$b:\baire\rightarrow G$ is a homeomorphism, then we call
$(b,G)$ \textit{a copy of the Baire space} and denote it by
$b:\baire\hookrightarrow G\subseteq X$.

\begin{proposition}\label{piececonksigma}
  For any $B\in \P_{\I^f}$ there is an $\I^f$-positive
  $\gdelta$ set $G\subseteq B$ and a copy of the Baire space
  $b:\baire\hookrightarrow G\subseteq X$ such that
  $$\I^f\har G=\{b[A]: A\subseteq\baire, A\in\ksigma\}.$$
\end{proposition}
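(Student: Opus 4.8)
The plan is to carve out of $B$ a copy of the Baire space on which $f$ is continuous but which, from the point of view of $\I^f$, behaves like a superperfect tree, and then read off the ideal equation. First I would use Theorem~\ref{Sol.dich} (or Lemma~\ref{iperfectdense}) to replace $B$ by an $\I^f$-positive $\gdelta$ set $G_0\subseteq B$, and apply Proposition~\ref{piececon} to $G_0$, obtaining an open set $U\subseteq\baire$ and a copy of the Cantor space $c:\cantor\hookrightarrow C\subseteq X$ with $f^{-1}[U]\cap C=c[\Q]$ and $C\setminus c[\Q]\subseteq G_0$, where $\Q$ is the set of eventually-zero sequences. Thus $c[\Q]$ is a countable dense subset of $C$ and $G:=C\setminus c[\Q]=C\cap f^{-1}[\baire\setminus U]$ is $\gdelta$; moreover, re-running the construction in the proof of Proposition~\ref{piececon} with the escape clopen sets $W_\tau$ chosen of diameter $<2^{-|\tau|}$ and confining the values of $f$ on their later pieces, I may assume that $f$ is continuous on $G$. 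Since $\cantor\setminus\Q$ is a nonempty, zero-dimensional, perfect Polish space which is nowhere locally compact --- every nonempty relatively clopen piece is $[\sigma]\setminus\Q$, dense and proper in the compact set $[\sigma]$, hence not compact --- it is homeomorphic to $\baire$; fix a homeomorphism $\beta:\baire\to\cantor\setminus\Q$ and set $b=c\circ\beta:\baire\to G$. The set $G$ is $\I^f$-perfect, hence $\I^f$-positive: by Lemma~\ref{iperfectcl} it is enough that $C$ be $\I^f$-perfect, and on any nonempty relatively clopen piece $V$ of $C$ the restriction $f\har V$ is discontinuous, since $f$ maps the dense set $V\cap c[\Q]$ into $U$ and the dense set $V\setminus c[\Q]$ into $\baire\setminus U$; so no relatively clopen piece of $C$ lies in a closed set on which $f$ is continuous.

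For the inclusion $\{b[A]:A\in\ksigma\}\subseteq\I^f\har G$, note that if $K\subseteq\baire$ is compact then $b[K]$ is a compact, hence closed, subset of $G$ on which $f$ is continuous, so $b[K]\in\I^f$; since $\I^f$ is a $\sigma$-ideal, $b[A]\in\I^f\har G$ for every $A\in\ksigma$. For the reverse inclusion, take $C'\in\I^f\har G$ and write $C'\subseteq\bigcup_n D_n$ with each $D_n$ closed in $X$ and $f\har D_n$ continuous; it suffices to prove $b^{-1}[D_n\cap G]\in\ksigma$, for then $b^{-1}[C']\in\ksigma$ and $C'=b[b^{-1}[C']]$. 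Putting $P_n=c^{-1}[D_n]$, a closed subset of $\cantor$, we have $D_n\cap G=c[P_n\setminus\Q]$, so $b^{-1}[D_n\cap G]=\beta^{-1}[P_n\setminus\Q]$ and I must show that $P_n\setminus\Q$ is $\sigma$-compact. By Hurewicz's theorem, if it were not, it would contain a relatively closed copy $E$ of $\baire$; letting $Q^\ast$ be the closure of $E$ in $\cantor$, one gets a Cantor set $Q^\ast\subseteq P_n$ (since $P_n$ is closed and $E\subseteq P_n$) with $E=Q^\ast\setminus\Q$, and $Q^\ast\cap\Q$ is dense in $Q^\ast$ --- otherwise $Q^\ast$ has a nonempty relatively clopen piece disjoint from $\Q$, i.e.\ a nonempty compact relatively clopen subset of $E\cong\baire$, which is impossible. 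But then $c[Q^\ast]\subseteq c[P_n]\subseteq D_n$, and $f$ maps the dense set $c[Q^\ast\cap\Q]$ into $U$ and the dense set $c[Q^\ast\setminus\Q]$ (dense because $E$ is dense in $Q^\ast$) into $\baire\setminus U$, so $f\har c[Q^\ast]$ is discontinuous on the nonempty set $c[Q^\ast]\subseteq D_n$, contradicting the continuity of $f\har D_n$. Hence $P_n\setminus\Q$ is $\sigma$-compact, and the two inclusions yield $\I^f\har G=\{b[A]:A\subseteq\baire,\ A\in\ksigma\}$.

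The step I expect to be the main obstacle is the claim that the construction in Proposition~\ref{piececon} can be arranged so that $f$ is continuous on $C\setminus c[\Q]$: one has to revisit the choice of the escape clopen sets $W_\tau$, making their diameters shrink along branches \emph{and} genuinely confine $f$ on all later pieces of the scheme, while keeping the $\I^f$-positivity bookkeeping of that proof intact. An alternative is to construct directly an $\omega$-branching Hurewicz-type scheme in which the $\omega$ children at each node carry pairwise disjoint, shrinking, locally finite $f$-image clopen sets converging to a point that is removed from the final $\gdelta$ set; in that formulation the same two computations give the ideal equation.
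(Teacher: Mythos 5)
There is a genuine gap at the step you yourself flag. The inclusion $\{b[A]:A\in\ksigma\}\subseteq\I^f\har G$ rests entirely on knowing that $f$ is continuous on $G=C\setminus c[\Q]$: you need $f\har b[K]$ to be continuous so that the compact (hence closed in $X$) set $b[K]$ is a generator of $\I^f$. You only sketch how one might ``re-run'' the Hurewicz scheme of Proposition~\ref{piececon}, shrinking the diameters of the clopen sets $W_\tau$ and ``confining $f$ on later pieces,'' but this is not carried out, and it is far from clear that it can be while preserving the $\I^f$-positivity bookkeeping: the $W_\tau$ and $U_\tau$ in Proposition~\ref{piececon} confine $f$-values at the deleted countable set $c[\Q]$ (via the points $x_\tau$), not on the limit set $G$. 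Without this the proposition is simply not established, since a priori a compact $K\subseteq G$ might fail to be in $\I^f$. The paper resolves this cleanly by arranging continuity \emph{before} Proposition~\ref{piececon}: using Theorem~\ref{genclosed} (properness and continuous reading of names) together with Theorem~\ref{Sol.dich}, one passes to an $\I^f$-positive $\gdelta$ set $B'\subseteq B$ on which $f$ agrees with a continuous function (restrict to generic reals over a countable elementary submodel), and then runs Proposition~\ref{piececon} inside $B'$; since $G\subseteq B'$, $f\har G$ is automatically continuous. Once this is in place both inclusions are immediate: a compact $K\subseteq G$ gives $K\in\I^f$ as above, and for $D\subseteq C$ closed with $f\har D$ continuous, $D\cap G=(f\har D)^{-1}[\baire\setminus U]$ is closed in the compact set $D$, hence compact. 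Your Hurewicz-theorem argument for this forward inclusion is correct and, interestingly, needs no hypothesis on $f\har G$ at all; but it is considerably heavier than the paper's one-line computation, and it does nothing to repair the backward inclusion, which is where the actual gap lies.
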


\begin{proof}
  By Theorem \ref{Sol.dich} and the continuous reading of
  names we may assume that $B$ is of type $\gdelta$ and $f$
  is continuous on $B$.  Applying Proposition \ref{piececon}
  we get a copy of the Cantor space
  $c:\cantor\hookrightarrow C\subseteq X$ and an open set
  $U\subseteq\baire$ such that $f^{-1}[U]\cap C=c[\Q]$ and
  $C\setminus c[\Q]\subseteq B$. Let $G=C\setminus c[\Q]$.
  Via a natural homeomorphism of $\baire$ and
  $\cantor\setminus\Q$ we get a copy of the Baire space
  $b:\baire\hookrightarrow G\subseteq X$. Note that
  $C\not\in \I^f$ (by Theorem \ref{jrthm}, since
  $f^{-1}[U]\cap C$ is not $\gdelta$ in $C$) and hence also
  $G\not\in \I^f$.

  The $\sigma$-ideal $\I^f\har G$ is generated by the sets
  $D\cap G$ for $D\subseteq C$ closed such that $f\har D$ is
  continuous. The $\sigma$-ideal $\{b[A]: A\subseteq\baire,
  A\in\ksigma\}$ is generated by compact subsets of $G$. We
  need to prove that these two families generate the same
  $\sigma$-ideals on $G$.

  If $D\subseteq G$ is compact, then $D$ is closed in $C$
  and $f$ is continuous on $D$ because $f$ is continuous on
  $G$. Hence $D=D\cap G\in \I^f\har G$.

  If $D\subseteq C$ is such that $f$ is continuous on $D$,
  then $D\cap G= (f\har D)^{-1}[\baire\setminus U]$ is a
  closed in $D$ subset of $G$, therefore compact.

  This ends the proof.
\end{proof}

As an immediate consequence of Proposition
\ref{piececonksigma} we get the following corollary.

\begin{corollary}\label{piececonmiller}
  The forcing $\P_{\I^f}$ is equivalent to the Miller
  forcing.
\end{corollary}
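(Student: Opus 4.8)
The plan is to read the statement off Proposition \ref{piececonksigma} together with the homogeneity of the Miller forcing: all of the descriptive set theory is already contained in Proposition \ref{piececonksigma}, and what is left is routine Boolean bookkeeping. I would write $\ksigma$ also for the $\sigma$-ideal of $\ksigma$ subsets of $\baire$, so that the Miller forcing $\miller$ is equivalent to $\P_{\ksigma}=\Bor(\baire)\slash\ksigma$, and I would picture $\miller$ concretely as the poset of superperfect trees $T\subseteq\otr$ ordered by inclusion.

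First I would establish two facts about $\miller$. (a) \emph{Homogeneity}: if $T$ is superperfect then every nonempty relatively clopen set $[\sigma]\cap[T]$ ($\sigma\in T$) is non-compact, because $\sigma$ has an $\omega$-splitting extension in $T$; hence $[T]$ is a nonempty zero-dimensional, nowhere locally compact Polish space, so it is homeomorphic to $\baire$ by the Alexandrov--Urysohn characterization. A homeomorphism $h\colon[T]\to\baire$ preserves $\sigma$-compactness and therefore induces an isomorphism of $\P_{\ksigma}\har[T]$ with $\P_{\ksigma}$; since superperfect subtrees of $T$ are dense in $\P_{\ksigma}\har[T]$ by the Hurewicz dichotomy, it follows that $\miller\har T$ is equivalent to $\miller$ for every condition $T$. (b) $\miller$ \emph{has a maximal antichain of size} $\c$: splitting $\omega=E\sqcup O$ into evens and odds and letting, for $x\in\cantor$, $T_x$ be the superperfect tree whose nodes are the $\langle i_0,\dots,i_m\rangle$ with $i_n\in E$ when $x(n)=0$ and $i_n\in O$ when $x(n)=1$, one has $[T_x]\cap[T_y]=\emptyset$ whenever $x\neq y$, so $\{T_x\colon x\in\cantor\}$ is an antichain of size $\c$; extending it to a maximal antichain by Zorn's lemma and using $|\miller|=\c$ gives a maximal one of size $\c$. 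Combining (a) and (b) with the standard observations that a maximal antichain of a poset gives a dense embedding of the disjoint sum of its restrictions into the poset, and that forcing equivalence is preserved under disjoint sums, I would conclude that $\miller$ is equivalent to $\bigsqcup_{\alpha<\c}\miller$, and hence --- splitting $\c$ into $|I|$ pieces each of size $\c$ --- that $\miller$ is equivalent to $\bigsqcup_{i\in I}\miller$ for every nonempty set $I$ with $|I|\le\c$.

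Then I would finish the argument. By Proposition \ref{piececonksigma} the family $\mathcal{D}$ of those $B\in\P_{\I^f}$ admitting a copy of the Baire space $b\colon\baire\hookrightarrow B\subseteq X$ with $\I^f\har B=\{b[A]\colon A\subseteq\baire,\ A\in\ksigma\}$ is dense in $\P_{\I^f}$; for each such $B$, the homeomorphism $b$ shows that $\P_{\I^f}\har B=\Bor(B)\slash(\I^f\har B)$ is isomorphic to $\P_{\ksigma}$, hence equivalent to $\miller$. Using density of $\mathcal{D}$ and Zorn's lemma I would fix a maximal antichain $\langle B_i\colon i\in I\rangle$ in $\P_{\I^f}$ with each $B_i\in\mathcal{D}$; since the conditions of $\P_{\I^f}$ are Borel subsets of $X$ we have $|I|\le\c$. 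By maximality, $\bigsqcup_{i\in I}(\P_{\I^f}\har B_i)$ densely embeds into $\P_{\I^f}$, so $\P_{\I^f}$ is equivalent to $\bigsqcup_{i\in I}(\P_{\I^f}\har B_i)$, which is equivalent to $\bigsqcup_{i\in I}\miller$, which by the previous step is equivalent to $\miller$. So $\P_{\I^f}$ is equivalent to the Miller forcing. The only step that needs genuine care is the homogeneity (a) --- the identification of $[T]$ with $\baire$ and the transport of $\ksigma$ along it --- together with the disjoint-sum bookkeeping; both are classical, so once Proposition \ref{piececonksigma} is available the corollary is essentially immediate.
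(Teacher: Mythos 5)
Your argument is correct and is essentially the only route: the paper declares the corollary an ``immediate consequence'' of Proposition \ref{piececonksigma} without spelling anything out, and your maximal-antichain bookkeeping (homogeneity of Miller via Alexandrov--Urysohn and the Hurewicz dichotomy, a size-$\c$ antichain, preservation of forcing equivalence under disjoint sums and restriction to a dense set) is precisely the standard way to pass from the local isomorphism $\P_{\I^f}\har G\cong\P_{\ksigma}$ on a dense set of conditions to global forcing equivalence. The one thing worth noting is that this ``immediacy'' genuinely does require something beyond Proposition \ref{piececonksigma} itself --- namely the classical homogeneity of $\miller$ and the equivalence $\miller\equiv\bigsqcup_{\c}\miller$ --- since $\P_{\I^f}$ is not a priori homogeneous, so the antichain argument you give (rather than a direct appeal to homogeneity of $\P_{\I^f}$) is the right way to close the gap.
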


Recall a theorem of Kechris, Louveau and Woodin
\cite[Theorem 7]{KLW}, which says that any coanalytic
$\sigma$-ideal of compact sets in a Polish space is either a
$\gdelta$ set, or else is $\coana$-complete. If $X$ is
compact, then $\I^f\cap K(X)$ is a coanalytic $\sigma$-ideal
of compact sets by Proposition \ref{piececoncoanaonana}.

\begin{proposition}
  $\I^f\cap F(X)$ is a $\coana$-complete set in $F(X)$.
\end{proposition}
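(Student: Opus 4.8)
The plan is to show that $\I^f\cap F(X)$ is coanalytic and $\coana$-hard in $F(X)$. Coanalyticity is immediate from Proposition \ref{piececoncoanaonana} together with the coding of closed sets described at the beginning of Section \ref{sec:coanaidclosed}: since $\I^f$ is $\coana$ on $\ana$, applying this to the universal closed set $\tilde C\subseteq\baire\times X$ shows that $\{t\in\baire:\ \tilde C_t\in\I^f\}$ is $\coana$, so the family of closed members of $\I^f$ is $\coana$ in $F(X)$. For hardness I would construct a reduction to $\I^f\cap F(X)$ from the set $\mathcal{S}=\ksigma\cap F(\baire)$ of $\sigma$-compact closed subsets of $\baire$; it is a classical fact (a form of the Hurewicz dichotomy) that $\mathcal{S}$ is $\coana$-complete in $F(\baire)$.

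To build the reduction, first apply Proposition \ref{piececonksigma} to $B=X$ (which is $\I^f$-positive, as $f$ is not piecewise continuous) to fix a $\gdelta$ set $G\subseteq X$ and a copy of the Baire space $b\colon\baire\hookrightarrow G\subseteq X$ with $\I^f\har G=\{b[A]:\ A\subseteq\baire,\ A\in\ksigma\}$. Inspecting that proof, $G$ is obtained as a copy of the Cantor space with a countable dense set removed, so $\cl{G}$ is compact and $\cl{G}\setminus G$ is countable. Define $r\colon F(\baire)\to F(X)$ by $r(A)=\cl{b[A]}$, the closure being taken in $X$. Then $r(A)\subseteq\cl{G}$, and since $b[A]$ is relatively closed in $G$ we have $b[A]=\cl{b[A]}\cap G$, whence $r(A)\setminus b[A]\subseteq\cl{G}\setminus G$ is countable. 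The map $r$ is Effros--Borel measurable, because for an open $W\subseteq X$ one has $r(A)\cap W\neq\emptyset$ iff $b[A]\cap W\neq\emptyset$ iff $A\cap b^{-1}[W]\neq\emptyset$, and $b^{-1}[W]$ is open in $\baire$, so the $r$-preimage of each generating Effros-open set is Effros-open.

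It then remains to verify $A\in\mathcal{S}\iff r(A)\in\I^f$. If $A\in\ksigma$, then $b[A]\in\I^f\har G\subseteq\I^f$, and since singletons (hence countable sets) belong to $\I^f$, $r(A)=b[A]\cup(r(A)\setminus b[A])\in\I^f$. Conversely, if $A\notin\ksigma$, then $b[A]\notin\I^f$: otherwise $b[A]=b[A]\cap G\in\I^f\har G$, so $b[A]=b[A']$ for some $\ksigma$ set $A'$, and injectivity of $b$ would force $A=A'\in\ksigma$, a contradiction. Since $b[A]\subseteq r(A)$ and $\I^f$ is closed under subsets, $r(A)\notin\I^f$. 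Hence $r$ is a Borel reduction of $\mathcal{S}$ to $\I^f\cap F(X)$, which together with coanalyticity gives that $\I^f\cap F(X)$ is $\coana$-complete.

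The only genuinely non-routine ingredient is Proposition \ref{piececonksigma} (resting on the Hurewicz-scheme construction of Proposition \ref{piececon}): it is exactly what produces a copy of the Baire space inside $X$ on which $\I^f$ restricts to $\ksigma$ and which additionally has compact closure; everything else is bookkeeping with the Effros Borel structure plus the classical completeness of $\ksigma\cap F(\baire)$. I would also remark that $r$ in fact takes values in $K(X)$, so the same computation re-proves that $\I^f\cap K(X)$ is not $\gdelta$, which is precisely the input needed to deduce the statement from \cite[Theorem 7]{KLW} as an alternative route.
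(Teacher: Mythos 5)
Your proof is correct and follows essentially the same route as the paper: both reduce from $\ksigma\cap F(\baire)$ via the map $F\mapsto\overline{b[F]}$ obtained from Proposition \ref{piececonksigma}, and both cite the classical completeness of $\ksigma\cap F(\baire)$. The only difference is that you spell out the two small points the paper leaves implicit when it asserts ``$\varphi^{-1}[\I^f]=\ksigma$ by Proposition \ref{piececonksigma}'': namely that $\cl{G}\setminus G=c[\Q]$ is countable (hence taking closures only adds an $\I^f$-small set, since singletons lie in $\I^f$), and that the reverse implication uses $\cl{b[F]}\cap G=b[F]$ together with injectivity of $b$. You also note coanalyticity explicitly, which the paper records just before the proposition rather than inside the proof. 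These are genuine fill-ins of details, not a different approach.
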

\begin{proof}
  As in Proposition \ref{piececonksigma} take
  $c:\cantor\hookrightarrow C\subseteq X$ a copy of the
  Cantor space and $b:\baire\hookrightarrow G\subseteq X$ a
  copy of the Baire space, $G\subseteq C$ a dense $\gdelta$
  set in $C$. Recall that the Borel structure on $F(C)$ is
  induced from the topology of the hyperspace.

  It is well-known (see \cite[Exercise 27.9]{Kechris}) that the
  set $\ksigma\cap F(\baire)$ is a $\coana$-complete set in
  $F(\baire)$.  Let $\varphi:F(\baire)\rightarrow K(C)$ be
  the function $$F(\baire)\ni F\mapsto\overline{b[F]}\in
  K(C),$$ where $\overline{A}$ denotes the closure of $A$ in
  $C$ (for $A\subseteq G$). It is routine to check that
  $\varphi$ is Borel-measurable. By Proposition
  \ref{piececonksigma} we have $\varphi^{-1}[\I^f]=\ksigma$.
  This proves that $\I^f$ is $\coana$-complete.
\end{proof}

Piecewise continuity of functions from $\baire$ to $\baire$
has already been investigated from the game-theoretic point
of view. In \cite{vanWesep} Van \mbox{Wesep} introduced the
Backtrack Game $G_\mathrm{B}(g)$ for functions
$g:\baire\rightarrow\baire$. \mbox{Andretta} \cite[Theorem
21]{Andretta} characterized piecewise continuity of a
function $g$ in terms of existence of a winning strategy for
one of the players in the game $G_\mathrm{B}(g)$.

For a Borel not piecewise continuous function
$g:\baire\rightarrow\baire$, the Backtrack Game can be used
to define a fusion scheme for the $\sigma$-ideal $\I^g$. In
the remaining part of this section, we will show a very
natural fusion scheme for $\I_g$ when
$g:\cantor\rightarrow\cantor$ is Borel, not piecewise
continuous.

Recall that partial continuous functions from $\cantor$ to
$\cantor$ with closed domains can be coded by monotone
functions from $\ctree$ into $\ctree$ (see \cite[Section
2B]{Kechris}).

If $T\subseteq\ctree$ is a finite tree,
$m:T\rightarrow\ctree$ is a monotone function and
$n<\omega$, then we say that $(T,m)$ is a \textit{monotone
  function of height $n$} if $m(\tau)$ is of lenght $n$ for
each terminal node $\tau$ of $T$. We say that $(T_1,m_1)$
\textit{extends} $(T_0,m_0)$ if $T_1$ is an end-extension of
$T_0$ and $m_1\supseteq m_0$.

We define the game scheme $G_\pc$ as follows. In his $n$-th
move, Adam picks $\xi_n\in \omega^n$ such that
$\xi_n\supseteq \xi_{n-1}$ ($\xi_{-1}=\emptyset$). In her
$n$-th turn, Eve constructs a sequence of finite monotone
functions $\langle H^n_i: i<\omega\rangle$ such that
\begin{itemize}
\item $\forall^\infty i\ H^n_i=\emptyset$,
\item $H^n_i$ extends $H^{n-1}_i$,
\item $H^n_i$ is a monotone function of height $n$ whenever
  $H^n_i\not=\emptyset$.
\end{itemize}
In each play in $G_\pc$, for each $i<\omega$ we have that
$H_i=\bigcup_{n<\omega} H^n_i$ is a monotone function which
defines a partial continuous function $h_i$ with closed
domain (possibly empty).

Let $g:\baire\rightarrow\baire$ be a not piecewise
continuous function and $B\subseteq\baire$. The game
$G_\pc^g(B)$ is a game in the game scheme $G_\pc$ with the
following payoff set. Eve wins a play $p$ in $G_\pc^g(B)$ if
for $x=\bigcup_{n<\omega} \xi_n$ ($\xi_n$ is the $n$-th move
of Adam in $p$)
$$x\not\in B\quad\vee\quad\exists i\in\omega\ (x\in\dom(h_i) \wedge g(x)=h_i(x))$$
(where the functions $h_i$ are computed from Eve's moves as
above). Otherwise Adam wins $p$.

\begin{proposition}\label{piececongame}
  For any set $A\subseteq\baire$, Eve has a winning strategy
  in the game $G_\pc^g(A)$ if and only if $A\in \I^g$.
\end{proposition}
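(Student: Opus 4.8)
The proof runs parallel to that of Proposition \ref{gamechare}, the extra work being forced by the fact that in $G_\pc$ Eve's moves are codes for partial continuous functions rather than clopen sets.

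\emph{Assume $A\in\I^g$.} Fix closed sets $D_n$ with $g\har D_n$ continuous, $D_n\subseteq D_{n+1}$, and $A\subseteq\bigcup_n D_n$, and let $m_n$ be a monotone function coding $g\har D_n$ (so that the associated partial continuous function has domain $D_n$). Eve ignores Adam: at stage $n$ she lets $H^n_i$, for $i\le n$, be the canonical height-$n$ truncation of $m_i$, and $H^n_i=\emptyset$ for $i>n$. One checks routinely that this respects the three rules of $G_\pc$, that the limit functions are exactly $h_i=h_{m_i}$, and that if Adam builds $x$ then Eve wins: trivially if $x\notin A$, and, if $x\in A$, because $x\in D_i$ for some $i$, whence $x\in\dom(h_i)$ and $g(x)=h_i(x)$. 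Thus Eve has a winning strategy in $G_\pc^g(A)$.

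\emph{Assume Eve has a winning strategy $S$.} As in Proposition \ref{gamechare}, for $\sigma\in\btree$ let $\tau_\sigma$ be the partial play in which Adam plays the successive initial segments of $\sigma$ and Eve answers by $S$, and let $H^{|\sigma|}_i(\sigma)$ ($i<\omega$) be the monotone functions in Eve's last move. Along a branch $x\in\baire$ the moves $H^n_i(x\har n)$ cohere to a monotone function coding a partial continuous $h^x_i$; the ``height $n$'' rule is essential here, since it forces every terminal node of the current domain tree to branch in the next round, so that $\dom(h^x_i)$ is genuinely closed (the body of the union of the finite domain trees). Since $S$ is winning, every $x\in A$ satisfies $x\in\dom(h^x_i)$ and $g(x)=h^x_i(x)$ for some $i$; writing $F_{i,n_0}$ for the set of such $x$ at which in addition Eve first activates $h_i$ at stage $n_0$ (a condition on $x\har n_0$ alone), we get $A\subseteq\bigcup_{i,n_0}F_{i,n_0}$.

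\emph{The main obstacle} is to show each $F_{i,n_0}\in\I^g$, which over the situation of Proposition \ref{gamechare} is complicated by two things: Eve's responses depend on Adam's moves, and $g$ is only Borel. The first is handled by having fixed the activation stage: on the relevant clopen set the initial finite monotone function of $h^x_i$ is pinned down, and thereafter the domain tree grows by exactly one level per round; hence $\{x:x\in\dom(h^x_i)\}$ meets this clopen set in a countable intersection of clopen sets, i.e.\ in a closed set, and on it the ``height $n$'' rule makes $x\mapsto h^x_i(x)$ uniformly continuous, because agreement of two branches on a long initial segment synchronises their values on a long initial segment. So $g$ agrees, on $F_{i,n_0}$, with a continuous function defined on a closed superset. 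Since $g$ is merely Borel this does not yet make $F_{i,n_0}$ closed; to finish one invokes the Jayne--Rogers theorem (Theorem \ref{jrthm})---in effect re-running its tree-theoretic proof, now with the splitting supplied by $S$---to present $F_{i,n_0}$ as a countable union of closed sets on which $g$ is continuous, whence $F_{i,n_0}\in\I^g$ and $A\in\I^g$. This last reduction is the genuine difficulty; the first paragraph and the bookkeeping are routine, modelled on Proposition \ref{gamechare}.
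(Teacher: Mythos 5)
Your forward direction matches the paper's. The converse, however, diverges from the paper and has a genuine gap, centered on the step you yourself flag as ``the genuine difficulty''. The pivotal claim --- ``thereafter the domain tree grows by exactly one level per round; hence $\{x:x\in\dom(h^x_i)\}$ meets this clopen set in a countable intersection of clopen sets, i.e.\ in a closed set'' --- is not justified by the rules: the ``height $n$'' constraint concerns the \emph{codomain} ($H^n_i(\tau)$ has length $n$ at terminal nodes), and places no bound whatsoever on how far the domain tree advances under end-extension from one round to the next. So whether $x\in\dom(h^x_i)$ involves, for each $l$, an unbounded existential over the stage $m$ at which $x\har l$ enters the evolving domain tree; the resulting set is $\gdelta$, not closed, and the uniform-continuity argument collapses for the same reason (agreement of $x$ and $y$ up to length $M$ need not synchronize $h^x_i(x)$ and $h^y_i(y)$ on a long initial segment if the domain tree has outrun the round counter). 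This is not cosmetic: a $\gdelta$ set on which $g$ is continuous can perfectly well be $\I^g$-positive --- Proposition \ref{piececonksigma} constructs exactly such a set --- so placing $F_{i,n_0}$ inside one proves nothing about membership in $\I^g$. And the closing appeal to Jayne--Rogers cannot be black-boxed: $g$ is by hypothesis not $\gdelta$-measurable, so Theorem \ref{jrthm} says nothing directly, and ``re-running its tree-theoretic proof with the splitting supplied by $S$'' is precisely the missing content.

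The paper avoids all of this by transporting the converse of Proposition \ref{gamechare} to the level of monotone functions rather than stratifying $A$ by activation stage. For each fixed $k$, Eve's answers $m^\tau_k$ along all partial plays $\tau$ are glued into one monotone function $G_k=\bigcup_\tau H^\tau_k[\tau]$, where $H^\tau_k[\tau]$ is $m^\tau_k$ restricted to the cone $T^\tau_k(\tau)$ above $\tau$; this cone restriction is the analogue of Eve's $C_\sigma\subseteq[\sigma]$ in the $\E$-game and is what makes the union coherent. The result is a single partial continuous $g_k$ with \emph{closed} domain, and winningness of $S$ gives $g\har A\subseteq\bigcup_k g_k$ outright --- no partition by activation stage, no closedness claim about a $\gdelta$ set of branches, and no appeal to Jayne--Rogers.
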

\begin{proof}
  If $A\in \I^g$ then there are closed sets
  $C_n\subseteq\cantor$ such that $A\subseteq\bigcup_n C_n$
  and $g\har C_n$ is continuous. Each function $g\har C_n$
  has its monotone function $G_n$ and Eve's strategy is
  simply to rewrite the $G_n$'s.

  On the other hand, suppose that there is a winning
  strategy for Eve and let $S$ be the tree of this strategy.
  The nodes of $S$ are determined by Adam's moves, so $S$ is
  isomorphic to $\ctree$. For $\tau\in T$ let
  $m^\tau_k:T^\tau_k\rightarrow\ctree$ be the monotone
  function $H_k$ defined by Eve in her last move of the
  partial play $\tau$. Denote by $H^\tau_k[\tau]$ the
  restriction of $m^\tau_k$ to $T^\tau_k(\tau)$. Put
  $G_k=\bigcup_{\tau\in T} H_k^\tau[\tau]$ and let $g_k$ be
  the partial continuous function with closed domain
  determined by the monotone function $G_k$. It follows from
  the fact that $S$ is winning for Eve, that $g\har
  A\subseteq\bigcup_n g_n$. This proves that $A\in \I^g$.
\end{proof}

\begin{corollary}\label{corpiececongame}
  If $B\subseteq\cantor$ is Borel and
  $g:\cantor\rightarrow\cantor$ is a Borel, not piecewise
  continuous function, then $B\in \I^g$ if and only if Eve
  has a winning strategy in $G_\pc^g(B)$.
\end{corollary}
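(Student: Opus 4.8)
The plan is to derive this corollary directly from Proposition \ref{piececongame} together with the Borel determinacy theorem. First I would observe that, for any Borel set $B\subseteq\cantor$, the payoff set for Eve in $G_\pc^g(B)$ is a Borel subset of $\lim G_\pc$: a play $p$ is won by Eve if and only if the point $x=\bigcup_n\xi_n$ coded by Adam's moves either fails to lie in $B$, or else lies in $\dom(h_i)$ and satisfies $g(x)=h_i(x)$ for some $i<\omega$. Since the map sending a play to $x$ is continuous, and since the sets $\dom(h_i)$ and the graphs of the $h_i$ are closed-in-the-parameters conditions read off continuously from Eve's moves, the condition ``$x\in\dom(h_i)\wedge g(x)=h_i(x)$'' is a Borel condition on the play provided $g$ is Borel; taking the countable union over $i$ and the union with the Borel set $\{p:x\notin B\}$ keeps us inside the Borel pointclass. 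Hence $(G_\pc,p(B))$ is a Borel game.

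By the Borel determinacy theorem (Martin), the game $G_\pc^g(B)$ is determined: one of the two players has a winning strategy. Now Proposition \ref{piececongame} tells us exactly when Eve has a winning strategy, namely precisely when $B\in\I^g$. Therefore, if $B\notin\I^g$, Eve has no winning strategy, so by determinacy Adam has one; and conversely if $B\in\I^g$ then Eve has one (and then Adam cannot, since the two cannot both win the same play). This gives the stated equivalence: $B\in\I^g$ if and only if Eve has a winning strategy in $G_\pc^g(B)$.

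The only point requiring a little care — and the one I expect to be the main obstacle — is the verification that the payoff set is genuinely Borel (as opposed to merely $\ana$), since this is what licenses the appeal to Borel determinacy rather than to analytic determinacy (which would need large cardinals). The key is that the monotone functions $H_i^n$ built by Eve are \emph{finite} objects announced move by move, so that $\dom(h_i)=\lim T_i$ for the tree $T_i=\bigcup_n T_i^n$ determined by the play, and membership ``$x\in\dom(h_i)$'' becomes $\forall n\,(x\har n\in T_i)$, a closed condition on the play; similarly $g(x)=h_i(x)$ unwinds, via continuity of $h_i$ on its domain, to a countable conjunction of conditions each of which is Borel once $g$ is Borel (here one uses that $g^{-1}$ of a basic clopen set is Borel). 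Assembling these, the Eve-payoff set is a countable union of Borel sets, hence Borel, and the corollary follows.
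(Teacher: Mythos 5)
Your argument is correct in the sense that it reaches the right conclusion, but the entire appeal to Borel determinacy is a detour that the statement does not need. Proposition \ref{piececongame} already asserts the biconditional ``Eve has a winning strategy in $G^g_\pc(A)$ if and only if $A\in\I^g$'' for \emph{every} set $A$, Borel or not, and its proof is direct in both directions: when $A\in\I^g$ a winning strategy for Eve is built explicitly from a countable closed cover, and when Eve has a winning strategy a cover of $A$ by closed sets of continuity is extracted from the strategy tree. No determinacy hypothesis enters anywhere, because the Proposition never needs to know what happens when Eve lacks a strategy. The Corollary is therefore simply the restriction of the Proposition to Borel $B$; its content is weaker, and the Borel hypothesis is there only to match the requirement ``$p(A)$ Borel for Borel $A$'' in the definition of a fusion scheme from Section \ref{sec:BanachMazur}, so that $G^g_\pc(\cdot)$ qualifies as a fusion scheme for $\I^g$.

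What your determinacy argument actually buys is a \emph{different} and strictly stronger statement, namely that for Borel $B$ the game is determined, so that $B\notin\I^g$ if and only if Adam has a winning strategy. That is a genuinely useful observation (and your verification that the payoff set is Borel when $B$ and $g$ are Borel is correct and worth recording), but it is not what Corollary \ref{corpiececongame} asserts. As written, your proof derives the stated equivalence by going through Adam's side unnecessarily: you already quote the Proposition as ``telling us exactly when Eve has a winning strategy,'' at which point the Corollary follows with no further work. So: correct, but the Borel-determinacy machinery should either be dropped, or promoted into an explicit remark that Adam wins exactly on the $\I^g$-positive Borel sets.
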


\end{document}